\def\Cal{\mathcal}
\def\A{{\Cal A}}
\def\B{{\Cal B}}
\def\G{{\Cal G}}
\def\R{{\Cal R}}
\def\M{{\Cal M}}
\def\D{{\Cal D}}
\def\S{{\Cal S}}
\def\K{{\Cal K}}
\def\I{{\Cal I}}
\def\L{{\Cal L}}
\def\J{{\Cal J}}
\def\V{{\Cal V}}
\def\bbk{{\Bbb K}}
\def\bbr{{\Bbb R}}
\def\bbn{{\Bbb N}}
\def\bbh{{\Bbb H}}
\def\bbc{{\Bbb C}}
\def\diag{{\hbox{\rm diag}}}
\def\const{{\hbox{\rm const}}}
\def\cos{{\hbox{\rm cos}}}
\def\det{{\hbox{\rm det}}}
\def\Gr{{\hbox{\rm Gr}}}
\def\vol{{\hbox{\rm vol}}}
\def\rn{\bbr^n}
\def\part{\partial}
\def\b{\beta}
\def\lng{\langle}
\def\rng{\rangle}
\def\Gam{\Gamma}
\def\Om{\Omega}
\def\a{\alpha}
\def\om{\omega}
\def\Del{\Delta}
\def\del{\delta}
\def\vp{\varphi}
\def\gam{\gamma}
\def\Lam{\Lambda}
\def\sig{\sigma}
\def\lam{\lambda}
\def\e{\varepsilon}
\font\frak=eufm10
\def\fr#1{\hbox{\frak #1}}
\def\frA{\fr{A}}
\def\kn{\bbk^n}
\def\rn{\bbr^n}
\def\cn{\bbc^n}
\def\rN{\bbr^N}
\def\tgr4{\tilde {\rm Gr}_{4n,4n-4}}
\def\tgrs4{\tilde {\rm Gr'}_{4n,4n-4}}
\def\tg2{\tilde {\rm Gr}_{2n,2n-2}}
\def\sideremark#1{\ifvmode\leavevmode\fi\vadjust{\vbox to0pt{\vss% the remark
 \hbox to 0pt{\hskip\hsize\hskip1em%                          will appear only
\vbox{\hsize2cm\tiny\raggedright\pretolerance10000%          on the side
 \noindent #1\hfill}\hss}\vbox to8pt{\vfil}\vss}}}%
\newtheorem{theorem}{Theorem}[section]
\newtheorem{lemma}[theorem]{Lemma}
\newtheorem{definition}[theorem]{Definition}
\newtheorem{corollary}[theorem]{Corollary}
\newtheorem{proposition}[theorem]{Proposition}
\theoremstyle{remark}
\newtheorem{remark}[theorem]{Remark}
\numberwithin{equation}{section}
\newcommand{\be}{\begin{equation}}
\newcommand{\ee}{\end{equation}}
\newcommand{\bea}{\begin{eqnarray}}
\newcommand{\eea}{\end{eqnarray}}
\newcommand{\Bea}{\begin{eqnarray*}}
\newcommand{\Eea}{\end{eqnarray*}}
\begin{document}

\title[Comparison of volumes]
{Comparison of volumes of convex bodies in real, complex, and
quaternionic spaces}

\author{Boris Rubin}
\address{
Department of Mathematics, Louisiana State University, Baton Rouge,
LA, 70803 USA}

%Institute of Mathematics, Hebrew University, Jerusalem 91904,
%ISRAEL}
\email{borisr@math.lsu.edu}

\thanks{The  research was supported in part by the  NSF grant DMS-0556157.}

\subjclass[2000]{Primary 44A12; Secondary 52A38}

%\date{May, 10, 2005 and, in revised form 000.}

%\dedicatory{This paper is dedicated to our authors.}

\keywords{ The Busemann-Petty problem, spherical Radon transforms,
cosine transforms, intersection bodies, quaternions}

\begin{abstract}

The classical  Busemann-Petty problem (1956) asks, whether
origin-symmetric convex bodies in $\mathbb {R}^n$ with smaller
hyperplane central sections necessarily have smaller volumes. It is
known, that the answer is  affirmative if  $n\le 4$ and negative if
$n>4$. The same question can be asked when volumes of hyperplane
sections are replaced by  other comparison functions having
geometric meaning. We give unified exposition of this circle of
problems
  in   real, complex, and
quaternionic $n$-dimensional spaces. All cases are treated
simultaneously. In particular, we show that  the Busemann-Petty
problem in the
 quaternionic $n$-dimensional space  has an
affirmative answer if and only if $n =2$. The method relies on the
properties of cosine transforms on the unit sphere. Possible
generalizations  are discussed.

\end{abstract}

\maketitle

\section{Introduction}

\setcounter{equation}{0}

Real and complex affine and Euclidean spaces  are traditional
objects in integral geometry. Similar spaces can be  built over more
general algebras, in particular, over
 quaternions. The discovery  of quaternions is
 attributed to W.R.  Hamilton (1843).\footnote{As is mentioned by
 Truesdell \cite[p. 306]{Tru}, ``quaternions themselves were  first discovered, applied
 and published by Rodriges, Poisson's former pupil, in 1840".}
 A variety of problems of differential geometry in quaternionic  and more general
 spaces over algebras were investigated by  Rosenfel'd and
 his collaborators, in particular,  in the Kasan'  geometric school (Russia); see, e.g., \cite {Ros, VSS, Shi}.
Some problems of quaternionic integral geometry, mainly related
 to polytopes and
  invariant densities, were studied by  Coxeter,
 Cuypers, and others; see [Cu, GNT1,
 GNT2] and references
 therein.

In the present article we are focused on comparison problems for
convex bodies in the general context of the space
 $\bbk^n$, where $\bbk$ stands for the field $\bbr$ of real numbers, the field
 $\bbc$ of complex numbers, and  the skew field $\bbh$ of  real quaternions.
   Since $\bbh$ is not commutative, special consideration is
 needed in this case.

  Let, for instance, $K$ and $L$ be origin-symmetric convex
bodies in $\rn$ with
 section functions $$S_K (H)\!=\!vol_{n-1} (K\cap H) \quad \mbox{ and} \quad S_L (H)\!=\!vol_{n-1} (L\cap
 H),$$ $H$ being a
 hyperplane passing through the origin. Suppose that $S_K (H)\le S_L
 (H)$ for all such $H$. Does it follow that $vol_n (K) \le vol_n (L)$?
 Since the latter may not be true, another question arises: For which operator $\D$ is the
 implication  \be \label {sool}\D S_K (H)\le \D S_L (H)\quad \forall H \Longrightarrow vol_n(K) \le
 vol_n(L)\ee valid? Comparison problems of this kind
   attract   considerable attention in
the last decade, in particular, thanks to remarkable connections
with harmonic analysis. The first question is known as the
Busemann-Petty (BP) problem \cite {BP}. Many authors contributed to
its solution, e.g., Ball \cite {Ba}, Barthe,  Fradelizi, and Maurey
\cite {BFM}, Gardner \cite{Ga1, Ga2, GKS}, Giannopoulos \cite{Gi},
Grinberg and  Rivin \cite {GRi}, Hadwiger \cite{Ha}, Koldobsky
\cite{K}, Larman and Rogers
 \cite{LR}, Lutwak \cite {Lu}, Papadimitrakis \cite {Pa}, Rubin \cite {R5}, Zhang \cite{Z2}. The answer is really
striking. It is ``Yes''  if and only if $n \le 4$; see \cite{Ga3,
GKS, K, KY}, and references therein.

The second question, related to implication (\ref{sool}),  was asked
by Koldobsky, Yaskin, and  Yaskina  \cite{KYY}. It was called  the
{\it modified Busemann-Petty
 problem}.
Both questions were studied by Koldobsky,
  K\"onig, Zymonopoulou \cite{KKZ} and Zymonopoulou \cite{Zy} for convex bodies in
  $\bbc^n$. The answer to the first question for $\bbc^n$ is ``Yes" if and only if $n \le
  3$.

  We suggest unified exposition of these
  problems for   real, complex, and also
quaternionic $n$-dimensional spaces and the relevant
$(n-1)$-dimensional subspaces $H$. All these cases are treated
simultaneously. In particular, we show that {\it  the quaternionic
BP problem has an affirmative answer if and only if $n =2$.}

 The article is almost self-contained.  Our
proofs  essentially differ from those in the aforementioned
publications
 and rely on the properties of  the  generalized cosine transforms on the unit sphere \cite{R2,
R3, R7}.

The setting of the quaternionic BP problem  and its solution require
careful
 preparation and  new geometric concepts. The crux is  that, unlike
  the fields of real and complex numbers, the algebra
of quaternions is not commutative. This results in non-uniqueness of
quaternionic analogues of such concepts as a vector space and its
subspaces, a symmetric convex body, a norm, etc.

 Another motivation for our work is the
 lower dimensional Busemann-Petty problem (LDBP), which sounds like the usual BP
 problem, but the hyperplane sections  are replaced by plane
 sections of fixed dimension $1<i<n-1$. In the case $i=2, \; n=4$,  an affirmative
 answer to LDBP follows from the solution of the usual  BP problem. For
$i>3$, a
 negative answer was first given by Bourgain and  Zhang \cite{BZ}; see
 also \cite{K, RZ} for alternative proofs.  In the cases  $i=2$ and $i=3$ for $n>4$, the answer is
  generally unknown, however,  if the body with smaller sections is
 a body of revolution, the answer is
 affirmative; see  \cite{GZ}, \cite{Z1}, \cite{RZ}. The paper \cite{R8} contains a  solution of the LDBP problem
  in the more general situation, when the body with smaller sections is invariant
  under rotations, preserving mutually orthogonal
  subspaces of dimensions $\ell$ and $n-\ell$, respectively. The answer essentially depends on
   $\ell$.

   It is natural to ask,   {\it how  invariance properties
   of  bodies affect  the  corresponding LDBP problem}?

   Of   course, this question is too vague, however, every specific
   example might be of interest. The article \cite{KKZ} on the BP
   problem in $\bbc^n$ actually deals with the LDBP problem for
   $(2n-2)$-dimensional sections of $2n$-dimensional convex bodies, which are
   invariant under the block diagonal subgroup $G$ of $SO(2n)$ of the form
   $$
   G=\{ g=\diag(g_1, \ldots, g_n):\;  g_1= \ldots = g_n \in SO(2)\}.
   $$

  We will show that  the BP problem in the $n$-dimensional left and
   right quaternionic spaces $\bbh^n_l$ and $\bbh^n_r$ is equivalent to the LDBP problem for
   $(4n-4)$-dimensional sections of $4n$-dimensional convex bodies, which are
   invariant under a certain  subgroup $G\subset SO(4n)$ of block diagonal
   matrices, having $n$ equal $4\times 4$ isoclinic (or Clifford) blocks. Every
   such block is a left (or right) matrix representation of a real quaternion and has the property of rotating all
lines through the origin in $\bbr^4$ by the same angle. We give
complete solution to this ``$G$-invariant" comparison problem in the
general contest of $dn$-dimensional convex bodies, $n>1$, the
symmetry of which is determined by complete system of orthonormal
tangent vector fields on  the unit sphere  $S^{d-1}$.  The classical
result of differential topology says, that such systems are
available only on $S^{1}$, $S^{3}$, and $S^{7}$; see Section
 \ref{mson}. We also study the corresponding  modified BP, when the
``derivatives" $\D S_K$ and $\D S_L$ are compared.

\subsection* {Plan of the paper and main results}
 The significant part of the paper (Sections 2-4) deals with
 necessary preparations,  the aim of which is to make the text
 accessible to broad audience of analysts and geometers. In Sections 2.1 and 2.2
we recall basic facts about  quaternions  and vector spaces
$\bbk^n$, $\bbk \in \{\bbr, \bbc, \bbh\}$. This information is
scattered in the literature; see, e.g., \cite{KS, Lou, Por, Ta, Wo,
Z}. We present it in the form, which is suitable for our purposes.
Since $\bbh$ is not commutative, we have to distinguish the left
vector space $\bbh_l^n$ and the right vector space $\bbh_r^n$.

In   Section 2.3 we introduce a concept of {\it equilibrated body}
in the general context of the space $\frA^n$, where $\frA$ is a real
associative normed algebra. These bodies serve as a substitute for
the class of origin-symmetric convex bodies in $\bbr^n$. As in the
real case (see, e.g., \cite {Bar}), they are associated with norms
on $\frA^n$. In the complex case, other names (``absolutely convex"
or ``balanced") are also in use
 \cite{GL, Hou, Rob}. We could not find any description
of this class of bodies in the quaternionic or more general
 contexts and present this topic in detail.

In   Section 2.4 we give precise setting of the comparison problem
 of the BP type for equilibrated
 convex bodies in $\bbk^n\in \{\bbr^n,
\,\bbc^n, \,\bbh_l^n, \, \bbh_r^n\}$ (see Problem {\bf A}) and the
corresponding problems {\bf B} and {\bf C} for $G$-invariant convex
bodies in $\bbr^{N}$, $N=dn$. Here $d=1,2$, and $4$, which
corresponds to the real, complex, and quaternionic cases,
respectively. Section 2.5 contains necessary information about
vector fields on unit spheres and extends problems {\bf B} and {\bf
C} to the case  $d=8$. This value of $d$ cannot be increased in the
framework of  the problems {\bf B} and {\bf C}.

Section 3 provides the reader with necessary background from
harmonic analysis related to analytic families of cosine transforms
and intersection bodies.  The latter were introduced  by Lutwak
\cite{Lu} and generalized in different directions  ; see, e.g,
Gardner \cite{Ga3}, Goodey, Lutwak,  and Weil \cite{GLW}, Koldobsky
\cite {K}, Milman \cite{Mi}, Rubin and Zhang \cite {RZ}, Zhang \cite
{Z1}. Here we follow our previous papers \cite {R2, R3, R7}. We draw
attention to Section 3.2 devoted to homogeneous distributions and
Riesz fractional derivatives
 $D^\a=(-\Del)^{\a/2}$, where $\Del$ is the Laplace operator on
 $\rN$. An important feature of these operators is that the corresponding Fourier multiplier $|y|^\a$ does not
 preserve the Schwartz space $\S(\rN)$ and  the  phrases like  ``in
 the sense of distributions" (cf.  \cite {KYY, KY, Zy})  require careful
  explanation and justification.

Section 4  is devoted to weighted section functions of
origin-symmetric convex bodies. If $K$ is such a body, these
functions are
 defined as $i$-plane Radon transforms of the characteristic function $\chi_K
 (x)$, (i.e. $\chi_K
 (x)=1$ when $x\in K$, and $0$ otherwise) with integration against
 the weighted Lebesgue measure with a power weight $|x|^\b$. The
 usefulness of such functions was first  noted in \cite{R4} and
 mentioned
 in \cite[p. 492]{RZ}. Smoothness properties of these functions  play
 a decisive role in establishing main results, and we study them
 in detail. Similar properties in the context of the modified BP problem in $\bbr^{n}$
 and $\bbc^{n}$ were briefly
 indicated in  \cite {KYY, KY, Zy}, however, the details  (which are important and fairly nontrivial) were omitted.

 In Section 5 we obtain main results; see Theorems
 \ref{mcor}, \ref{37}, \ref{37b}, and Corollaries \ref{krrr}, \ref{mcors}.
 In particular, {\it the Busemann-Petty problem in $\kn$ has an affirmative
answer if and only if $n \le 2+2/d$, where  $d=1,2$, and $4$ in the
real, complex, and quaternionic cases, respectively.}

\noindent {\bf Acknowledgement.} I am grateful to  Professors Ralph
Howard, Daniel Sage,  and Michael Shapiro for useful  discussions.

 \noindent{\bf Notation.} We denote by   $
\sig_{n-1}\!=\! 2\pi^{n/2}/\Gam (n/2) $ the area of the unit sphere
$S^{n-1}$ in $\rn$; $SO(n)$  is the special orthogonal group. For
$\theta \!\in \!S^{n-1}$ and $\gam \!\in \!SO(n)$, $d\theta$ and
$d\gam$ denote the relevant probability measures; $\D(S^{n-1})$ is
the space of $C^\infty$-functions on $S^{n-1}$ with standard
topology; $ \D_e(S^{n-1})$ is the subspace of even functions in
$\D(S^{n-1})$.

In the following $M_{n,k}(\bbr)$ is the set of real matrices  having
$n$ rows and $k$
 columns; $M_n (\bbr)=M_{n,n}(\bbr)$; $A^T$ denotes  the transpose of a matrix $A$; $I_n \!\in\!
M_n (\bbr)$ is  the identity  matrix;  $V_{n, k}=\{F\in
M_{n,k}(\bbr):\; F^T F=I_k\}$ is the Stiefel manifold of orthonormal
$k$-frames in $\bbr^{n}$; $\Gr_k (V)$ is the Grassmann manifold of
$k$-dimensional linear subspaces of the vector space $V$.

 Given a
certain class $X$ of functions or bodies, we denote by $X^G$ the
corresponding subclass of $G$-invariant objects. For example, $C^G
(S^{n-1})$ and $D^G (S^{n-1})$ are the spaces of continuous and
infinitely differentiable functions on $S^{n-1}$, respectively, such
that $f(g\theta)=f(\theta) \; \forall g\in G, \; \theta \in
S^{n-1}$. An origin-symmetric (o.s.) star body in $\bbr^n$, $ n \ge
2$, is  a compact set $K$ with non-empty interior, such that $tK
\subset K \; \forall t\in [0,1]$, $K=-K$, and the {\it radial
function} $ \rho_K (\theta) = \sup \{ \lambda \ge 0: \, \lambda
\theta \in K \}$ is continuous on  $S^{n-1}$.
 We denote by $\K^n$
 the set of all o.s. star  bodies  in $\bbr^n$. A body $K\in \K^n$
 is said to be smooth if $\rho_K\in  \D_e(S^{n-1})$.

\section{Preliminaries}

\subsection{Quaternions}\label {mnok} We regard $\bbh$ as a normed
algebra over $\bbr$ generated by the units $e_0, \, e_1, \,e_2,
\,e_3$ (the more familiar notation is $\bf 1$, $\bf i$, $\bf j$,
$\bf k$, but we reserve these symbols for other purposes).  Every
element $q \in \bbh$
 is  expressed as $q=q_0e_0+q_1e_1+q_2e_2+q_3e_3$ ($ q_i \in
 \bbr$). We set
$$\bar
q=q_0e_0-q_1e_1-q_2e_2-q_3e_3, \qquad
|q|=\sqrt{q_0^2+q_1^2+q_2^2+q_3^2}.$$
 The multiplicative structure
in $\bbh$ is governed by the rules
$$
e_0 e_i=e_i e_0=e_i, \qquad i=0,1,2,3,
$$
$$
e_1 e_2=-e_2 e_1=e_3, \qquad e_2 e_3=-e_3 e_2= e_1,  \qquad  e_3
e_1=- e_1 e_3= e_2,
$$
$$
e_1^2=e_2^2=e_3^2=- e_0.
$$
The product of two quaternions $p=p_0e_0+p_1e_1+p_2e_2+p_3e_3$ and
$q=q_0e_0+q_1e_1+q_2e_2+q_3e_3$ is computed accordingly as \bea
pq&=&(p_0q_0-p_1q_1-p_2q_2-p_3q_3)e_0 \nonumber\\
&+&(p_0q_1+p_1q_0+p_2q_3-p_3q_2)e_1\label{pr1}\\
&+&(p_0q_2-p_1q_3+p_2q_0+p_3q_1)e_2\nonumber\\
&+&(p_0q_3+p_1q_2-p_2q_1+p_3q_0)e_3,\nonumber\eea   so that $$q\bar
q=\bar q q=|q|^2, \quad \overline{pq}=\bar q\bar p, \quad |pq| =
|p||q|, \quad q^{-1}=\bar q/|q|^2.$$ We identify
$$
\bbr=\{q \in \bbh: q_1=q_2=q_3=0\}, \qquad \bbc=\{q \in \bbh:
q_2=q_3=0\},$$ and denote by $Sp (1)$ the group of quaternions of
absolute value $1$. There is a canonical bijection $h: \bbh \to
\bbr^4$, according to which, \Bea q=q_0e_0+q_1e_1+q_2e_2+q_3e_3 \!
\!\!\!&&\overset {h}{\longrightarrow}\;  v_q=(q_0,q_1,q_2,q_3)^T,\\
Sp (1)&&\overset {h}{\longrightarrow} \; S^3. \Eea

 \noindent By (\ref{pr1}), \bea p\bar
q&=&(p_0q_0+p_1q_1+p_2q_2+p_3q_3)e_0
+(-p_0q_1+p_1q_0-p_2q_3+p_3q_2)e_1 \nonumber \\ &+&
(-p_0q_2+p_1q_3+p_2q_0-p_3q_1)e_2
+(-p_0q_3-p_1q_2+p_2q_1+p_3q_0)e_3,\nonumber \eea or \be\label {pr4}
p\bar q=\sum\limits_{i=0}^{3} (v_p \cdot A_i v_q) \,e_i,\ee \bea
A_0=I_4&=& \left[ \begin{array}{cccc}
1 & 0 & 0 & 0 \\
0 & 1 & 0 & 0 \\
0 & 0 & 1 & 0 \\
0 & 0 & 0 & 1 \end{array} \right ], \qquad A_1=\left[
\begin{array}{cccc}
0 & -1 & 0 & 0 \\
1 & 0 & 0 & 0 \\
0 & 0 & 0 & -1 \\
0 & 0 & 1 & 0 \end{array} \right ],\nonumber \\
\label{pr3p}{}\\
A_2&=&\left[
\begin{array}{cccc}
0 & 0 & -1 & 0 \\
0 & 0 & 0 & 1 \\
1 & 0 & 0 & 0 \\
0 & -1 & 0 & 0 \end{array} \right ],  \qquad  A_3=\left[
\begin{array}{cccc}
0 & 0 & 0 & -1 \\
0 & 0 & -1 & 0 \\
0 & 1 & 0 & 0 \\
1 & 0 & 0 & 0 \end{array} \right ].\nonumber  \eea Similarly,
\be\label {pr4s} \bar p q=\sum\limits_{i=0}^{3} (v_p \cdot A'_i
v_q)\, e_i,\ee  \bea &&A'_0=A_0=I_4, \qquad A'_1=\left[
\begin{array}{cccc}
0 & 1 & 0 & 0 \\
-1 & 0 & 0 & 0 \\
0 & 0 & 0 & -1 \\
0 & 0 & 1 & 0 \end{array} \right ],\nonumber \\
\label{pr3ps}{}\\
A'_2&=&\left[
\begin{array}{cccc}
0 & 0 & 1 & 0 \\
0 & 0 & 0 & 1 \\
-1 & 0 & 0 & 0 \\
0 & -1 & 0 & 0 \end{array} \right ],  \qquad  A'_3=\left[
\begin{array}{cccc}
0 & 0 & 0 & 1 \\
0 & 0 & -1 & 0 \\
0 & 1 & 0 & 0 \\
-1 & 0 & 0 & 0 \end{array} \right ].\nonumber  \eea One can readily
see that $A_i, A'_i \in SO(4) $ and  collections
$$\{A_0 v_q, A_1
v_q, A_2 v_q, A_3 v_q\}, \qquad\{A'_0 v_q, A'_1 v_q, A'_2 v_q, A'_3
v_q\}$$ form  orthonormal bases of $\bbr^4$ for every $q\in Sp (1)$
. This gives the following.
  \begin{theorem} \label {macr}There exist ``left rotations" $A_i$ and  ``right rotations"
 $A'_i \quad (i=1,2,3)$, such that for every $\sig\in S^3$, the frames
$$\{\sig, A_1
\sig, A_2 \sig, A_3\sig\}, \qquad\{\sig, A'_1 \sig, A'_2 \sig, A'_3
\sig\}$$ form  orthonormal bases of $\bbr^4$.
\end{theorem}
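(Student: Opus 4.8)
The plan is to verify directly that the matrices exhibited in (\ref{pr3p}) and (\ref{pr3ps}) do the job, reading off the frame property from the multiplicativity of the quaternionic norm rather than from a brute-force computation of the sixteen inner products involved.

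First I would dispose of the easy part: $A_i,A'_i\in SO(4)$ for $i=1,2,3$. Each of these matrices is visibly a signed permutation matrix whose columns are distinct vectors $\pm h(e_j)$, hence orthogonal, and a one-line cofactor (or block) computation gives $\det=1$ in every case. Together with $A_0=A'_0=I_4$ this also accounts for the trivial frame vector.

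For the frame property, fix $q\in Sp(1)$ and consider the $\bbr$-linear map $T_q\colon\bbh\to\bbh$, $T_q(p)=p\bar q$. Since $|p\bar q|=|p|\,|\bar q|=|p|$, the map $T_q$ is an isometry, so under the identification $h\colon\bbh\to\bbr^4$ it is represented by a matrix $M(q)\in O(4)$. By (\ref{pr4}), the $i$-th coordinate of $v_{p\bar q}=M(q)v_p$ equals $v_p\cdot(A_iv_q)$; comparing for all $p$, the $i$-th row of $M(q)$ is $(A_iv_q)^T$ for $i=0,1,2,3$. The rows of a matrix in $O(4)$ form an orthonormal basis of $\bbr^4$, so (using $A_0=I_4$) the collection $\{v_q,A_1v_q,A_2v_q,A_3v_q\}$ is an orthonormal basis. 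Since $h$ restricts to a bijection $Sp(1)\to S^3$, writing $\sig=v_q$ proves the first assertion. The second is identical with $\bar pq$ in place of $p\bar q$: the map $p\mapsto\bar pq$ is again an $\bbr$-linear isometry of $\bbh$, hence corresponds to a matrix $M'(q)\in O(4)$, and by (\ref{pr4s}) its $i$-th row is $(A'_iv_q)^T$; therefore $\{\sig,A'_1\sig,A'_2\sig,A'_3\sig\}$ is an orthonormal basis for every $\sig\in S^3$.

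There is essentially no obstacle here: the statement is a finite, explicit verification, and the only point worth isolating is the reformulation that recognizing $A_0v_q,\dots,A_3v_q$ (respectively $A'_0v_q,\dots,A'_3v_q$) as the rows of an orthogonal matrix makes orthonormality automatic, sparing one the direct check of all the pairwise products $(A_iv_q)\cdot(A_jv_q)$. (One can add, if desired, that $M(q)\in SO(4)$ by continuity and $M(e_0)=I_4$, whereas $M'(e_0)=\diag(1,-1,-1,-1)$ puts $M'(q)$ in the other component of $O(4)$; this is immaterial for the conclusion.)
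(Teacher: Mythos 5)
Your proposal is correct and is essentially the verification the paper leaves to the reader: the matrices are exactly those of (\ref{pr3p})--(\ref{pr3ps}), and the paper simply asserts ("one can readily see") that the collections $\{A_iv_q\}$, $\{A'_iv_q\}$ are orthonormal. Your observation that these vectors are the rows of the orthogonal matrix representing the isometry $p\mapsto p\bar q$ (resp.\ $p\mapsto \bar p q$), with orthogonality coming from $|pq|=|p|\,|q|$, is a clean way to make that assertion precise without checking the pairwise inner products, and it is consistent with the paper's later remark that $L_q,R_q\in SO(4)$ for $|q|=1$.
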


 The left- and right-multiplication mappings $p \to qp$ and $p \to
pq$  in $\bbh$ can be realized as linear transformations of
$\bbr^4$, namely, \be\label{m7} v_{qp}= L_q v_p, \qquad v_{pq}= R_q
v_p,\ee \be\label {f7} L_q\!=\!\left[
\begin{array}{cccc}
q_0 & -q_1 & -q_2 & -q_3 \\
q_1 & \, \,q_0 & -q_3 & \,\, q_2 \\
q_2 &  \,\, q_3 &  \,\, q_0 & -q_1 \\
q_3 & -q_2 &  \, \,q_1 &  \,\,q_0 \end{array} \right ], \quad
R_q\!=\!\left[
\begin{array}{cccc}
q_0 & -q_1 & -q_2 & -q_3 \\
q_1 &  \,\, q_0 &  \,\, q_3 & -q_2 \\
q_2 & -q_3 &  \, \,q_0 &  \,\, q_1 \\
q_3 &  \, \,q_2 & -q_1 &  \, \,q_0 \end{array} \right ].\ee
 These
formulas define regular representations of  $\bbh$ in the algebra
$M_4 (\bbr)$ of $4 \times 4$ real matrices: \be \rho_l: q \to L_q,
\qquad \rho_r: q \to R_{\bar q},\ee so that  $\rho_l (pq)=\rho_l (p)
\rho_l (q)$, $\;\rho_r (pq)=\rho_r (p)  \rho_r (q)$.
 Clearly,
\be \label{qbr} L_q=\sum\limits_{i=0}^3 q_i A_i, \qquad R_{\bar
q}=\sum\limits_{i=0}^3 q_i A'_i.\ee In particular, \be\label{ai}
A_i=L_{e_i}, \qquad A'_i=R_{\bar e_i},\qquad i=0,1,2,3.\ee For any
$p, q \in \bbh$, matrices $L_p$ and $R_q$ commute, that is,
\be\label{comu} L_pR_q =R_qL_p.\ee Moreover, $\det (L_q)=\det
(R_q)=|q|^4$ (see, e.g., \cite [p. 28]{Be}). Since the columns of
each of these matrices are mutually orthogonal, then, for $|q|=1$,
both matrices belong to $SO(4)$. The map
$$
Sp(1) \times Sp(1)  \longrightarrow SO(4), \qquad (p,q)
\longrightarrow L_pR_{\bar q},$$ is a group surjection with kernel
$\{(e_0, e_0),\;(-e_0, -e_0)\}$ \cite{Por, Wo}. A direct computation
shows that $x \cdot R_q x=x \cdot L_q x=q_0$ for every $x \in S^3$.
It means that both $L_q$ and $R_q$ have the property of rotating all
half-lines originating from $O$ through the same angle
$\cos^{-1}q_0$ (such rotations are called {\it isoclinic}  or {\it
Clifford translations} \cite {Wo}). We call $L_q$ and $R_q$ the {\it
left rotation} and the {\it right rotation}, respectively. Note also
that \be\label{gal} JL_q J=R_{\bar q}, \quad JR_q J=L_{\bar
q},\qquad J=\left[\begin{array}{cc}
-1 & 0 \\
0 &  I_3 \end{array} \right ].\ee It means that the left rotation
becomes the right one if we change the direction of the first
coordinate axis in  $\bbr^4$.

 Similarly, if $\bbk=\bbc$, we set \[\bbc \ni c=a +ib \overset
{h}{\longrightarrow} \;v_c=(a, b)^T \in \bbr^2,\]so that \be\label
{f8} v_{cd}=v_{dc}= M_c v_d; \qquad c,d \in \bbc,\qquad
M_c=\left[\begin{array}{cc}
a & -b \\
b &  a \end{array} \right ].\ee Clearly, $M_c \in SO(2)$ if $|c|=1$,
and, conversely, every element of $SO(2)$ has the form $M_c$,
$c=\cos \vp +i\sin \vp$.

\subsection {The space $\bbk^n$} Let $\bbk \in \{\bbr, \bbc, \bbh
\}$. Consider the set of ``points" $x=(x_1, \ldots, x_n), \; x_i \in
\bbk$, that can be regarded as an additive  abelian group in a usual
way. We want to equip this set with the structure of the inner
product vector  space over $\bbk$. The resulting space will be
denoted by $\kn$.  Unlike the cases $\bbk =\bbr$ and $\bbk =\bbc$,
in the non-commutative case $\bbk=\bbh$ it is necessary to
distinguish two types of vector spaces, namely, {\it right vector
spaces} and {\it left vector spaces}.

We recall (see, e.g., \cite {Art}) that an additive  abelian group
$X$ is a {\it right $\bbh$-vector space} if there is  a map $X
\times \bbh \longrightarrow X$, under which the image of each pair
$(x,q) \in X \times \bbh$ is denoted by $xq$, such that for all $q,
q', q'' \in \bbh$ and $x, x', x'' \in X$,

(a) $(x'+ x'')q=x'q+ x''q$;

(b) $ x(q' +q'') =xq' +xq''$;

(c) $x (q'q'')=(xq')q''$;

(d) $x e_0=x$.

\noindent Similarly,  an additive  abelian group $X$ is a {\it left
$\bbh$-vector space} if there is  a map $\bbh \times X
\longrightarrow X$, under which the image of each pair $(q,x) \in
\bbh \times X$ is denoted by $qx$, such that for all $q, q', q'' \in
\bbh$ and $x, x', x'' \in X$,

(a$'$) $q(x'+ x'')=qx'+ qx''$;

(b$'$) $ (q' +q'')x =q'x +q''x$;

(c$'$) $(q'q'')x =q'(q''x)$;

(d$'$) $e_0 x =x$.

According to these definitions, we define the left vector space
$\bbh^n_l$  to be the space of row vectors $x=(x_1, x_2, \ldots,
x_n), \quad x_j \in \bbh,$ with multiplication by scalars $c \in
\bbh$ from the left-hand side ($x  \to cx=(cx_1, cx_2, \ldots,
cx_n)$). We equip $\bbh^n_l$ with the {\it left inner product}
\be\label {i5} \langle x,y \rangle_l= \sum\limits_{j=1}^n  x_j  \bar
y_j.\ee The  right vector space $\bbh^n_r$  is defined as the space
of column vectors $x=(x_1, x_2, \ldots, x_n)^T, \quad x_j \in \bbh,$
with multiplication by scalars $c \in \bbh$ from the right-hand side
($x \to xc=(x_1 c, x_2 c, \ldots, x_n c)^T$) and with the {\it right
inner product} \be\label {i5s} \langle x,y \rangle_r=
\sum\limits_{j=1}^n \bar x_j y_j.\ee Clearly, $\overline { \langle
x,y \rangle_l }=\langle y,x \rangle_l$, $\;\overline { \langle x,y
\rangle_r }=\langle y,x \rangle_r$. Furthermore, if $x^*=(\bar
x)^T$, then
$$
 \langle x,y \rangle_l=\langle x^*,y^* \rangle_r, \qquad \langle x,y \rangle_r=\langle x^*,y^* \rangle_l.
 $$

 If $c$ is a real number, we can
write $cx=xc$  for both $x \in \bbh^n_l$ and $x \in \bbh^n_r$.
 If $\bbk=\bbc$ (or $\bbr$) we  regard  $\bbc^n$  (or $\bbr^n$) as the space of column vectors and set
\be\label {ibca} \langle x,y \rangle= \sum\limits_{j=1}^n \bar x_j
y_j,\ee as in (\ref{i5s}) (in the commutative case,  definitions
(\ref{i5}) and (\ref{i5s})  coincide up to conjugation: $\langle x,y
\rangle_l=\overline{\langle x,y \rangle_r}\,$).
\begin{definition} \label {elwz}  We write $\kn$ for  the vector
spaces $\bbh^n_l$, $\bbh^n_r$,  $\bbc^n$,  and $\bbr^n$,  equipped
with the inner product defined above. \end{definition}

There is a natural bijection $h: \kn \to \bbr^N$, $N=dn$, where $
d=1,2$, and $4$ in the real, complex, and quaternionic case,
respectively. Specifically,
 \be \label{bij}\bbh^n_l \ni x\!=\!
(x_1,  \ldots, x_n) \overset {h}{\longrightarrow}
v_x\!=\!\left[\begin{array}{c}
v_{x_1}  \\
...\\
v_{x_n}  \end{array} \right ]\in  \bbr^{4n},\ee

\be \label{bij1}\bbh^n_r \ni x\!=\!\left[\begin{array}{c}
x_1  \\
...\\
x_n  \end{array} \right ] \overset {h}{\longrightarrow}
v_x\!=\!\left[\begin{array}{c}
v_{x_1}  \\
...\\
v_{x_n}  \end{array} \right ]\in  \bbr^{4n},\ee

\be \label{bij2}\bbc^n \ni x\!=\! (x_1,  \ldots, x_n) \overset
{h}{\longrightarrow} v_x\!=\!\left[\begin{array}{c}
v_{x_1}  \\
...\\
v_{x_n}  \end{array} \right ]\in  \bbr^{2n},\ee where
$v_{x_i}=h(x_i)$. Abusing notation, we use the same letter $h$ for
both the scalar case, as in Section \ref{mnok},
 and the vector case, as in (\ref{bij})-(\ref{bij2}).

 Formulas (\ref {m7}) and (\ref {gal}) have obvious
extensions. Namely,

\vskip 0.1truecm

 \noindent for $ x \in (\bbh^n)_l$:
\be \label{mill} v_{qx}\!=\!\left[\begin{array}{c}
v_{qx_1}  \\
...\\
v_{qx_n}   \end{array} \right ]\!=\!\left[\begin{array}{c}
L_q v_{x_1}  \\
...\\
L_q v_{x_n}   \end{array} \right ]\!=\!\L_q v_{x},\quad
\L_q\!=\!\diag (L_q, \ldots , L_q);\ee

\noindent for $ x \in (\bbh^n)_r$: \be\label{milr}
v_{xq}\!=\!\left[\begin{array}{c}
v_{x_1q}  \\
...\\
v_{x_nq}   \end{array} \right ]\!=\!\left[\begin{array}{c}
R_q v_{x_1}  \\
...\\
R_q v_{x_n}   \end{array} \right ]\!=\!\R_q v_{x},\quad
\R_q\!=\!\diag (R_q, \ldots , R_q);\ee
 \be \label{219}\J \L_q \J=\R_{\bar q}, \quad \J \R_q \J=\L_{\bar q},\qquad \J=\diag (J,
\ldots , J).\ee Matrices $\L_q$, $\R_q$, and $\J$ have $n$ blocks;
 $\L_q$ and $\R_q$  belong to $ SO(4n)$, and $\J^2$ is the
identity matrix.

By (\ref {pr4}), the inner product (\ref{i5}) can be written as
\be\label{cb}
 \langle x,y  \rangle_l = \sum\limits_{i=0}^3 \langle x,y
 \rangle_i  \,e_i\ee
\be\label{ail} \langle x,y \rangle_i =v_x \cdot \A_i v_y, \qquad
\A_i=\diag (A_i, \ldots , A_i) \quad \mbox{\rm ($n$ blocks)},\ee
$A_i$ being defined by (\ref{pr3p}). Similarly, by (\ref {pr4s}),
\be\label{cbs}
 \langle x,y  \rangle_r = \sum\limits_{i=0}^3 \langle x,y
 \rangle'_i  \,e_i, \ee
\be\label{air} \langle x,y \rangle'_i =v_x \cdot \A'_i v_y, \qquad
\A'_i=\diag (A'_i, \ldots , A'_i).\ee By (\ref{ai}) and (\ref{219}),
\be\label{aii} \J \A_i \J= \A'_i, \qquad i=0,1,2,3.\ee

In the case $\bbk=\bbc$, for $x \in \bbc^n$ and $c \in \bbc$, owing
to (\ref{f8}), we have
 \be\label{milg} v_{cx}=v_{xc}=\M_c v_{x},\qquad \M_c=\diag (M_c, \ldots , M_c) \in SO(2n).\ee
Moreover, \be\label{vap} \langle x,y \rangle = v_x \cdot v_y -i (v_x
\cdot \B v_y), \ee \be\label{28} \B=\diag \left (
\left[\begin{array}{cc}
0 & -1  \\
1 & \,\,0  \end{array} \right ], \ldots , \left[\begin{array}{cc}
0 & -1  \\
1 & \,\,0  \end{array} \right ]\right ).\ee

We introduce the following block diagonal subgroups  consisting of
$n$ equal isoclinic blocks: \bea\label{bl1} \qquad G_{\bbh,
l}=&&\!\!\!\!\! \!\!\!\{g \in SO(4n): \\&&\!\!\!\!\! \!\!\!g=\L_q=
\diag (L_q, \ldots , L_q)\; \mbox {\rm for some}\; q \in \bbh, \;
|q|=1\}, \nonumber\eea
 \bea\label{bl2} \qquad G_{\bbh,
r}=&&\!\!\!\!\! \!\!\!\{g \in SO(4n):
\\&&\!\!\!\!\! \!\!\!g=\R_q= \diag (R_q, \ldots , R_q)\; \mbox {\rm
for some}\; q \in \bbh, \; |q|=1\}, \nonumber\eea

 \bea\label{bl3} \qquad  G_{\bbc}=&&\!\!\!\!\! \!\!\!\{g \in SO(2n):
\\&&\!\!\!\!\! \!\!\!g=\M_c=\diag (M_c, \ldots , M_c)\; \mbox {\rm
for some}\; c \in \bbc, \; |c|=1\}. \nonumber\eea

 If $\bbk=\bbr$,
then the corresponding group $G_{\bbr}$ consists of two elements,
namely, $I_n$ and $-I_n$. The groups  $G_{\bbh, l}$ and $ G_{\bbh,
r}$ are conjugate to each other by  involution $\J$: \be G_{\bbh,
l}=\J G_{\bbh, r} \J.\ee

\begin{definition} \label {ello} We will use   the unified notation $G$ for groups $G_{\bbh, l}$, $G_{\bbh,
r}$, $G_{\bbc}$, and $G_{\bbr}$.  \end{definition}

\subsection {Equilibrated convex bodies}
It is known that origin-symmetric convex bodies in $\rn$ are in
one-to-one correspondence with norms on $\rn$. {\it What is a
natural analogue of this class of bodies in  spaces over more
general fields  or algebras}?  Below we study this question  in the
general context of spaces over associative real normed algebras
$\frA$ with identity. Our consideration  generalizes
 the known reasoning for  real and complex numbers  \cite{Bar, GL, Hou, Rob}.

 We assume that $\frA$ contains real numbers and
denote by $|\lam|$ the norm of an element $\lam$ in $\frA$.
 Let $V$ be a left (or right) module over $\frA$. By relating vectors in $V$ new elements, called
points,  one obtains an affine space over $\frA$  \cite{Ros}. We
keep the same notation $V$ for this affine space.  As usual, a set
$A$ in $V$  is called convex if $x \in A$ and $y \in A$ implies $\a
x +\b y\in A$ for all $\a \ge 0,\, \b \ge 0,\, \a+\b=1$. A compact
convex set in $V$ with non-empty interior is called a {\it convex
body}.

 \begin{definition} A set $A$  in a left (right) space  $V$  over $\frA$ is called {\it equilibrated}  if
for all $x \!\in \!A$, $\lam x\! \in \!A$ ($x\lam \!\in \!A$)
whenever $\lam \!\in\! \frA$,$ \, |\lam|\!\le \!1$. \end{definition}

An equilibrated set in $\bbr^n$ is just an origin-symmetric
star-shaped set.
 The next definition agrees with
standard terminology for normed algebras; cf. \cite [p. 655]{Is}.
\begin{definition}\label{levd} Let $V$ be a left  space over  $\frA$. A function $p: V \to \bbr$ is called a norm if the
following conditions are satisfied:

{\rm (a)} $p(x) \ge 0$ for all $x \in V$;  $\;p(x) =0$ if and only
if $x=0$;

{\rm (b)} $p(\lam x)=|\lam |p(x)$  for all $x \in V$ and all $ \lam
\in \frA$;

{\rm (c)} $p(x+y)\le p(x) + p(y)$  for all $x,y \in V$.

\noindent If $V$ is a right  space over $\frA$, then {\rm (b)} is
replaced by

{\rm (b$'$)} $p(x\lam)=|\lam |p(x)$  for all $x \in V$ and all $
\lam \in \frA$.
\end{definition}

Let $V=\frA^n$ be the $n$-dimensional left (right) affine space over
$\frA$. Every point $x \in V$ is represented as $x=x_1 f_1 + \ldots
+x_n f_n$ ( $x=f_1 x_1 + \ldots +f_n x_n$), where $x_i \in \frA$ and
$f_1=(1,0, \ldots, 0), \ldots , f_n=(0,0, \ldots, 1)$
 is a standard basis in $V$. We set $||x||_2=(\sum\limits_{i=1}^n
 |x_i|^2)^{1/2}$.

\begin{lemma}\label{lev} Let $V=\frA^n$ be a left (right) space over $\frA$.

{\rm (i)} If $p: V\to \bbr$ is a norm, then \be A_p =\{x \in V: p(x)
\le 1\}\ee is an equilibrated convex body.

{\rm (ii)} Conversely, if $A$ is an equilibrated convex body in $V$,
then \be p_A (x)=||x||_A=\inf \{r>0: x \in rA \}\ee is a norm in $V$
such that $A=\{x \in V: ||x||_A \le 1\}$.
\end{lemma}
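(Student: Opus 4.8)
The plan is to verify the two assertions by imitating the classical real-variable argument (as in \cite{Bar}), paying attention only to the points where the algebra structure of $\frA$ intervenes. I will treat the left-space case; the right-space case is identical after replacing $\lam x$ by $x\lam$ throughout.

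For part (i), suppose $p$ is a norm and set $A_p=\{x\in V:p(x)\le 1\}$. Convexity follows from (c) together with (b) applied to real scalars: if $p(x)\le 1$, $p(y)\le 1$ and $\a,\b\ge 0$, $\a+\b=1$, then $p(\a x+\b y)\le \a p(x)+\b p(y)\le 1$. That $A_p$ is equilibrated is immediate from (b): if $p(x)\le 1$ and $\lam\in\frA$ with $|\lam|\le 1$, then $p(\lam x)=|\lam|\,p(x)\le 1$. It remains to check that $A_p$ is a convex \emph{body}, i.e.\ compact with nonempty interior. Here I use that $\frA$ is a finite- or in any case normed real algebra with identity, so on $V=\frA^n$ the norm $p$ and the reference norm $\|x\|_2=(\sum|x_i|^2)^{1/2}$ are comparable: from (b), (c) and the triangle inequality one gets $p(x)\le\sum_i p(x_if_i)=\sum_i|x_i|\,p(f_i)\le C\|x\|_2$ with $C=(\sum_i p(f_i)^2)^{1/2}$, so $p$ is continuous in the $\|\cdot\|_2$-topology; then, since $p$ is continuous and strictly positive on the (compact) unit $\|\cdot\|_2$-sphere by (a), it is bounded below there by some $c>0$, whence $c\|x\|_2\le p(x)\le C\|x\|_2$ for all $x$. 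Consequently $A_p$ contains the $\|\cdot\|_2$-ball of radius $1/C$ (nonempty interior) and is contained in the $\|\cdot\|_2$-ball of radius $1/c$, and being also closed (preimage of $[0,1]$ under the continuous $p$) it is compact.

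For part (ii), let $A$ be an equilibrated convex body and put $p_A(x)=\inf\{r>0:x\in rA\}$. Since $A$ has nonempty interior it contains some $\|\cdot\|_2$-ball of radius $\rho>0$, so $p_A(x)\le\|x\|_2/\rho<\infty$; since $A$ is compact it is contained in some $\|\cdot\|_2$-ball of radius $R$, so $p_A(x)\ge\|x\|_2/R$, which gives (a) (and in particular $p_A(x)=0$ iff $x=0$, using that $0\in A$). The homogeneity (b) is the place to be careful with noncommutativity: for $\lam\in\frA$, $\lam\ne0$, write $\mu=\lam/|\lam|$, so $|\mu|=1$; since $A$ is equilibrated, $x\in rA\iff \mu x\in rA$ (the implication $\Rightarrow$ is the definition applied to $\mu$, and $\Leftarrow$ follows by applying it to $\mu^{-1}=\bar\mu/|\mu|^2=\bar\mu$, which also has absolute value $1$). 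Hence for $t>0$, $\lam x\in tA\iff \mu x\in tA\iff x\in (t/|\lam|)A$ — here the last step uses $\lam x=|\lam|\,(\mu x)$ and that $A$ is star-shaped under \emph{real} dilations, which is part of being equilibrated (taking $\lam$ real). Taking infima gives $p_A(\lam x)=|\lam|\,p_A(x)$; the case $\lam=0$ is trivial. The subadditivity (c) uses convexity of $A$ in the standard way: given $\e>0$, pick $r>p_A(x)$, $s>p_A(y)$ with $x\in rA$, $y\in sA$; then $\frac{x+y}{r+s}=\frac{r}{r+s}\cdot\frac{x}{r}+\frac{s}{r+s}\cdot\frac{y}{s}$ is a convex combination of points of $A$, hence in $A$, so $p_A(x+y)\le r+s$, and letting $r\downarrow p_A(x)$, $s\downarrow p_A(y)$ finishes it. Finally $A=\{x:p_A(x)\le1\}$: if $p_A(x)\le1$ then $x\in rA$ for all $r>1$, and letting $r\downarrow1$ and using that $A$ is closed gives $x\in A$; conversely $x\in A$ gives $p_A(x)\le1$ directly.

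The only genuine subtlety — and the step I expect to need the most care — is the homogeneity (b)/(b$'$) in the noncommutative setting: one must invoke the equilibrated hypothesis both for $\mu$ and for $\mu^{-1}$ (legitimate because $|\mu|=|\mu^{-1}|=1$ when $|\mu|=1$, using $q^{-1}=\bar q/|q|^2$ in $\frA$) to turn "$x\in rA$" into "$\mu x\in rA$" as an \emph{equivalence} rather than a one-sided inclusion, and one must separate out the real scalar $|\lam|$, for which commutativity is not an issue. Everything else is the textbook Minkowski-gauge argument, with the normed-algebra hypothesis on $\frA$ supplying the comparison with $\|\cdot\|_2$ that makes "compact with nonempty interior" equivalent to "closed and $\|\cdot\|_2$-bounded away from $0$ and $\infty$."
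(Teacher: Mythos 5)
Your argument is correct and follows essentially the same route as the paper's Appendix: the Minkowski-gauge construction, convexity and equilibration from (b)--(c), continuity of $p$ via the basis decomposition $p(x)\le\sum_i|x_i|\,p(f_i)$, and compactness of $A_p$ as a closed set bounded by the two-sided comparison with $||\cdot||_2$. The one blemish is the chain ``$\lam x\in tA\iff \mu x\in tA\iff x\in(t/|\lam|)A$'', where the factor $1/|\lam|$ should already appear in the middle term (correctly: $\lam x\in tA\iff\mu x\in(t/|\lam|)A\iff x\in(t/|\lam|)A$); your accompanying explanation shows you intend the right argument, and you in fact make explicit two points the paper's proof glosses over, namely that the equilibrated property must be invoked for both $\mu$ and $\mu^{-1}$ to obtain an equivalence rather than an inclusion, and the final verification that $A=\{x: p_A(x)\le 1\}$.
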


The proof of this lemma is standard and is given in Appendix.

In the following $\frA\equiv \bbk \in \{\bbr, \, \bbc, \,\bbh\}$;
$\;\bbk^n$ is any of the spaces $\bbr^n, \bbc^n, \bbh^n_l$ or
$\bbh^n_r$; $\;G \in \{G_{\bbr}, G_{\bbc}, G_{\bbh, l}, G_{\bbh, r}
\}$; see Definitions  \ref{elwz} and \ref{ello}; $\;N=n, 2n$, or
$4n$, respectively. Our next aim is to establish connection between
equilibrated convex bodies in $\kn$ and $G$-invariant
origin-symmetric star bodies in $\bbr^N=h(\kn)$. We recall the
notation \be\label{refl} \J=\diag \Big (\left[\begin{array}{cc}
-1 & 0 \\
0 &  I_3 \end{array} \right ], \ldots , \left[\begin{array}{cc}
-1 & 0 \\
0 &  I_3 \end{array} \right ]\Big ) \qquad \mbox{\rm ($n$
blocks)}.\ee Clearly, $\J$ acts on $\xi= (\xi_1, \xi_2, \dots ,
\xi_{4n})\in \bbr^{4n}$ by converting $\xi_1$ into $-\xi_1$, $\xi_5$
into $-\xi_5$, and so on.

\begin{theorem}\label{lle} Let $A$ be a set in $\kn$ and let $B=h(A)$ be its
image in $ \bbr^N$. Then

\noindent  {\rm (i)}   $A$ is convex if and only if $B$ is convex.

\noindent {\rm (ii)} $A$  is
 equilibrated in $\bbh^n_l$ if and only if  $B$ is $G_{\bbh, l}$-invariant and
 star-shaped.

\noindent {\rm (iii)} $A$  is
 equilibrated in $\bbh^n_r$ if and only if  $B$ is $G_{\bbh, r}$-invariant and
 star-shaped.

\noindent {\rm (iv)}  $A$  is
 equilibrated in $\cn$  if and only if  $B$  is $G_{\bbc}$-invariant
 and star-shaped.

\noindent {\rm (v)} $A$  is
 equilibrated in   $\rn$
  if and only if  it is origin-symmetric and star-shaped.

\noindent {\rm (vi)} A  set $S$ in $\bbr^{4n}$ is star-shaped and
$G_{\bbh, l}$-invariant (or $G_{\bbh, r}$-invariant) if and only if
the reflected set $\J S$ is star-shaped and  $G_{\bbh, r}$-invariant
($G_{\bbh, l}$-invariant, respectively).
\end{theorem}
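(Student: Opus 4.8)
The plan is to prove each item by translating the algebraic/geometric condition on $A\subset\kn$ into a condition on $B=h(A)\subset\bbr^N$ via the bijection $h$, using the fact that $h$ is $\bbr$-linear (so it carries real segments to real segments) together with the multiplication formulas of Section~2.1--2.2. For item~(i), I would observe that for $x,y\in\kn$ and $\a,\b\ge 0$ with $\a+\b=1$ the scalars $\a,\b$ are real, so by the remark that $cx=xc$ for real $c$ one has $h(\a x+\b y)=\a\,h(x)+\b\,h(y)=\a v_x+\b v_y$; hence $A$ is convex precisely when $B$ is closed under real convex combinations, i.e.\ when $B$ is convex.

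For items~(ii)--(iv) the key is the description of multiplication by unit scalars as action of the group $G$. Take (ii): if $x\in\bbh^n_l$ and $q\in\bbh$ with $|q|\le 1$, write $q=|q|\,q'$ with $q'\in Sp(1)$ (when $q\neq 0$); then $qx=|q|\,(q'x)$, and by (\ref{mill}) $v_{q'x}=\L_{q'}v_x$ with $\L_{q'}\in G_{\bbh,l}$, so $v_{qx}=|q|\,\L_{q'}v_x$. Thus ``$\lam x\in A$ for all $|\lam|\le 1$'' is equivalent to ``$t\,g\,v_x\in B$ for all $g\in G_{\bbh,l}$ and all $t\in[0,1]$,'' which is exactly the conjunction of $G_{\bbh,l}$-invariance and star-shapedness of $B$. (One should check the two conditions really do decouple: $G_{\bbh,l}$-invariance is the case $|q|=1$, and star-shapedness is the case $q$ real in $[0,1]$; conversely their conjunction gives the general $|q|\le1$ statement via the factorization $q=|q|q'$.) Items~(iii) and~(iv) are identical with $\R_q\in G_{\bbh,r}$ via (\ref{milr}) and $\M_c\in G_{\bbc}$ via (\ref{milg}), respectively. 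Item~(v) is the degenerate case $\bbk=\bbr$, $G_{\bbr}=\{\pm I_n\}$: multiplication by real $\lam\in[-1,1]$ amounts to $-I_n$-invariance (i.e.\ origin-symmetry) together with star-shapedness, and $h$ is the identity here.

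Finally, item~(vi) follows from the conjugation identity (\ref{219}), $\J\L_q\J=\R_{\bar q}$ and $\J\R_q\J=\L_{\bar q}$, together with the fact that $\J\in O(N)$ with $\J^2=I$ commutes with scalar dilations $\xi\mapsto t\xi$, so $\J$ maps star-shaped sets to star-shaped sets and conjugates $G_{\bbh,l}$ onto $G_{\bbh,r}$; applying this to $S$ and to $\J S$ (using $\J^2=I$) gives both directions. There is no serious obstacle here; the only point requiring a little care is the decoupling in (ii)--(iv) — making sure that $G$-invariance plus star-shapedness is genuinely equivalent to the single condition ``$\lam x\in A$ whenever $|\lam|\le1$'' — which is handled cleanly by the polar factorization $\lam=|\lam|\,\lam'$ of a nonzero scalar into its modulus and a unit, and by treating $\lam=0$ separately (it is covered by star-shapedness since $0\in A$).
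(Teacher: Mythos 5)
Your proposal is correct and follows essentially the same route as the paper: real-linearity of $h$ for (i), the polar factorization $q=|q|\,\omega$ with $|\omega|=1$ combined with the identities $v_{qx}=\L_q v_x$ (resp. $\R_q$, $\M_c$) to decouple $G$-invariance from star-shapedness in (ii)--(iv), and the conjugation identity $\J\L_q\J=\R_{\bar q}$ for (vi). Nothing essential is missing.
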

\begin{proof} {\rm (i)} Since $h(\a x +\b y) \!= \!\a h(x) \! + \!\b h(y)$ for all
$\a, \b  \!\in  \!\bbr$ and $x,y  \!\in \! \kn$, then  $A$ and
$B=h(A)$ are convex simultaneously.

{\rm (ii)} Suppose that $A \subset \bbh^n_l$ is  equilibrated,
$\xi\in B$, and $x= h^{-1}(\xi)$. For any $q \in \bbh$ with $|q|=1$
we have $qx \in A$, and therefore,  $\L_q \xi =h(qx)\in B$.
Furthermore, for any $ \lam \in [0, 1]$, $\lam\xi=\lam h(x)=h(\lam
x)$. Since $\lam x \in A$, then $\lam \xi \in h(A)=B$. Thus, $B$ is
$G_{\bbh, l}$-invariant and star-shaped. Conversely, suppose that
$B=h(A)$ is star-shaped and  $G_{\bbh, l}$-invariant. Choose any $x
\in A,\; q \in \bbh, \; |q|\le 1$, and set $q=\lam \om, \; \lam
=|q|, \; |\om|=1$. We have
$$qx=\lam\om x=\lam h^{-1} h (\om x)=h^{-1} [\lam \L_\om h(x)].$$
Since $B=h(A)$ is $G_{\bbh, l}$-invariant, then $\L_\om h(x) \in B$
and since $B$ is star-shaped, then $\lam\L_\om h(x) \in B$. Hence,
$qx=h^{-1} [\lam \L_\om h(x)]\in A$.

The proof of {\rm (iii)} and {\rm (iv)}  follows the same lines with
obvious changes. The statement {\rm (v)} is trivial. The statement
{\rm (vi)} follows from (\ref{219}). Indeed, let  $S$ be a
star-shaped $G_{\bbh, l}$-invariant set in $\bbr^{4n}$  and let
$y\in \J S$. Then $y= \J x, \; x \in S$, and for any $q \in \bbh$
with $|q|=1$ we have $ \R_q y=\R_q \J x=\J \J \R_q \J x=\J\L_{\bar
q} x \in \J B$, because $\L_{\bar q} x \in B$. Furthermore, for any
$\lam \in [0,1]$, $\lam y=\lam \J x=\J \lam x\in \J B$, because
$\lam x \in B$. The reasoning in the opposite direction is similar.
\end{proof}

\subsection{Central hyperplanes in $\kn$ and $G$-invariant
Busemann-Petty problem in $\rN$}\label {3344}

Let $S_{\bbk^n} =\{y \in \kn: ||y||_2=1\}$ be the unit sphere in
$\kn$. Every hyperplane in $\kn$ passing through the origin has the
form \be\label{240} y^\perp =\{ x \in \kn: \langle x,y \rangle
=0\},\qquad y \in S_{\bbk^n},\ee where $\langle x,y \rangle$ is the
relevant inner product; see  (\ref{i5}), (\ref{i5s}), (\ref{ibca}).

 If $\bbk=\bbr$, this is a usual $(n-1)$-dimensional subspace
 of $\bbr^n$. If $\bbk=\bbc$, then, owing to
 (\ref{vap}), the equality $ \langle x,y \rangle
=0$ is equivalent to a system of two equations
$$
\xi \cdot \theta=0, \qquad \xi \cdot \B\theta=0,$$ where $ \xi=h(x)
\in \bbr^{2n}, \; \theta=h(y) \in S^{2n-1}$. This system can be
replaced by one matrix equation \be \label{2dp}F_2(\theta)^T \xi=0,
\qquad F_2(\theta)=[\theta, \B\theta] \in V_{2n, 2},\ee where
$V_{2n, 2}$ is the Stiefel manifold  of orthonormal $2$-frames in
$\bbr^{2n}$. Equation (\ref{2dp}) defines a $(2n-2)$-dimensional
subspace
 of $\bbr^{2n}$. The collection of all such subspaces will be
 denoted by $\Gr_{2n-2}^{\bbc} (\bbr^{2n})$.

In the non-commutative case $\bbk=\bbh$ we have two option. If
$\kn=\bbh^n_l$, then, owing to
 (\ref{cb}), the equality
$\langle x,y \rangle_l=0$  is equivalent to a system of four
equations
$$
\xi \cdot \A_i \theta =0 \qquad (i=0,1,2,3),
$$
or \be \label{4dp}F_{4,l}(\theta)^T \xi=0, \qquad
F_{4,l}(\theta)=[\A_0 \theta, \; \A_1 \theta, \; \A_2 \theta, \;
\A_3 \theta ]\in V_{4n, 4},\ee
 where $ \xi=h(x) \in \bbr^{4n}$, and $ \theta=h(y) \in S^{4n-1}$
(for simplicity, we use the same letters). If $\kn=\bbh^n_r$, then,
by (\ref{cbs}), $\langle x,y \rangle_r=0$  is equivalent to
 \be \label{4dps}F_{4,r}(\theta)^T \xi=0,  \qquad F_{4,r}(\theta)=[\A'_0 \theta, \; \A'_1
\theta, \; \A'_2 \theta, \; \A'_3 \theta ]\in V_{4n, 4}. \ee Since
$\A'_i
 =\J\A_i\J$ (see (\ref{aii})),  then \be\label {waas}
 F_{4,r}(\theta)\!=\!\J F_{4,l}(\J\theta)\quad \mbox{\rm for every}\quad \theta
\! \in \!S^{4n-1}.\ee Thus, (\ref{4dp}) and (\ref{4dps}) define two
different  $(4n-4)$-dimensional subspaces
 of $\bbr^{4n}$ generated by the same point $\theta \in S^{4n-1}$.  We
 denote by $\Gr_{4n-4}^{\bbh, l} (\bbr^{4n})$ and $\Gr_{4n-4}^{\bbh, r}
(\bbr^{4n})$ respective collections of all such subspaces, which
 are isomorphic to $S^{4n-1}$. By (\ref{waas}),
$$\Gr_{4n-4}^{\bbh, r}
(\bbr^{4n}) =\J \Gr_{4n-4}^{\bbh, l} (\bbr^{4n}).$$

Given $\theta \in S^{dn-1}\;$ ($d=1,2,4$), we will be using the
unified notation
 $H_\theta$ for the $(dn-d)$-dimensional subspace
orthogonal to   $ F_1(\theta)=\theta$, $F_2(\theta)$,
$F_{4,l}(\theta)$, and
 $F_{4,r}(\theta)$,  respectively.

 \begin{proposition}\label {t25}  The ``right"  manifold $\Gr_{4n-4}^{\bbh, r}
(\bbr^{4n})$ is invariant under the ``left"  rotations $\L_q $, that
is,
$$
 \L_q
\Gr_{4n-4}^{\bbh, r} (\bbr^{4n}) = \Gr_{4n-4}^{\bbh, r}
(\bbr^{4n}).$$ The ``left"  manifold $\Gr_{4n-4}^{\bbh, l}
(\bbr^{4n})$ is invariant under the ``right" rotations $\R_q $, that
is,
$$
 \R_q \Gr_{4n-4}^{\bbh, l} (\bbr^{4n}) = \Gr_{4n-4}^{\bbh, l} (\bbr^{4n}).$$
\end{proposition}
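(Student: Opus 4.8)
The plan is to give an intrinsic description of the subspaces constituting $\Gr_{4n-4}^{\bbh, r}(\bbr^{4n})$ and then to invoke the fact that left and right rotations commute. By definition, $H_\theta$ for the ``right'' manifold is the orthogonal complement of the column span $V_\theta$ of $F_{4,r}(\theta)=[\A'_0\theta,\A'_1\theta,\A'_2\theta,\A'_3\theta]$. The first step is to rewrite this span in terms of right rotations: from (\ref{ai}) and (\ref{qbr}), carried over blockwise to the $4n\times 4n$ matrices, one has $\A'_i=\R_{\bar e_i}$ and $\R_{\bar q}=\sum_{i=0}^{3}q_i\A'_i$ for every $q\in\bbh$, so that
\[
V_\theta=\{\R_{\bar q}\theta:\,q\in\bbh\}.
\]
Since $|\R_{\bar q}\theta|=|q|$ whenever $|\theta|=1$, the map $q\mapsto\R_{\bar q}\theta$ is injective and $V_\theta$ is a genuine $4$-dimensional subspace; thus $\Gr_{4n-4}^{\bbh,r}(\bbr^{4n})=\{\,V_\theta^\perp:\ \theta\in S^{4n-1}\,\}$.

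Next, fix $q\in\bbh$ with $|q|=1$, so $\L_q\in SO(4n)$ and $\L_q^{-1}=\L_{\bar q}$. Applying the commutation relation (\ref{comu}) blockwise to $\L_q$ and $\R_{\bar p}$ gives $\L_q\R_{\bar p}=\R_{\bar p}\L_q$, hence for any $\theta\in S^{4n-1}$
\[
\L_q V_\theta=\{\L_q\R_{\bar p}\theta:\,p\in\bbh\}=\{\R_{\bar p}(\L_q\theta):\,p\in\bbh\}=V_{\L_q\theta}.
\]
Because $\L_q$ is orthogonal it commutes with passing to orthogonal complements, so $\L_q H_\theta=(\L_q V_\theta)^\perp=V_{\L_q\theta}^\perp=H_{\L_q\theta}$, and $\L_q\theta\in S^{4n-1}$. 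Therefore $\L_q$ maps $\Gr_{4n-4}^{\bbh,r}(\bbr^{4n})$ into itself; running the same argument with $\bar q$ in place of $q$ gives the reverse inclusion, so equality holds.

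For the second assertion one can either repeat this reasoning with the roles of left and right swapped (using $\A_i=\L_{e_i}$, $\L_q=\sum_i q_i\A_i$, $W_\theta=\{\L_q\theta:\,q\in\bbh\}$, $H_\theta=W_\theta^\perp$, and $\R_q W_\theta=W_{\R_q\theta}$), or deduce it from the first part via the identities $\Gr_{4n-4}^{\bbh,r}(\bbr^{4n})=\J\,\Gr_{4n-4}^{\bbh,l}(\bbr^{4n})$ and $\J\R_q\J=\L_{\bar q}$ of (\ref{219}): then $\R_q\Gr_{4n-4}^{\bbh,l}=\J(\J\R_q\J)\Gr_{4n-4}^{\bbh,r}=\J\L_{\bar q}\Gr_{4n-4}^{\bbh,r}=\J\Gr_{4n-4}^{\bbh,r}=\Gr_{4n-4}^{\bbh,l}$. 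I do not expect a genuine obstacle here; the only point requiring a little care is the bookkeeping that $\A'_i$ really is the block-diagonal right rotation $\R_{\bar e_i}$, and hence that $V_\theta$ is exactly $\{\R_{\bar q}\theta:\,q\in\bbh\}$, after which the conclusion is immediate from the commutativity of left and right rotations.
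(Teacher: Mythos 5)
Your proof is correct and follows essentially the same route as the paper: both hinge on the identity $\A'_i=\R_{\bar e_i}$ and the commutativity $\L_q\R_{\bar p}=\R_{\bar p}\L_q$ to show that $\L_q$ sends the $4$-frame (or its span) attached to $\theta$ to the one attached to $\L_q\theta$, and both obtain the reverse inclusion by applying the same argument to $\L_q^{-1}$. The only cosmetic differences are that you phrase the key step in terms of the span $V_\theta=\{\R_{\bar q}\theta:q\in\bbh\}$ rather than the frame matrix $F_{4,r}(\theta)$ itself, and you record the $\J$-conjugation shortcut for the second half, which the paper leaves as ``similar.''
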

 \begin{proof}Let $H \in
\Gr_{4n-4}^{\bbh, r} (\bbr^{4n})$, that is,
 $H$ is orthogonal to $F_{4,r}(\theta)=[\A'_0 \theta, \; \A'_1
\theta, \; \A'_2 \theta, \; \A'_3 \theta ]$ for some $\theta \in
S^{4n-1}$.  Since $\L_p$ and $\R_q$ commute for any $p, q \in
 \bbh$ and $\A'_i=\R_{\bar e_i}$ (see (\ref{comu}) and (\ref{ai})), then $\L_q
 \A'_i=\A'_i \L_q$ and $\L_q F_{4,r}(\theta)=F_{4,r} (\L_q \theta)$. This
  implies  $$\L_q  \Gr_{4n-4}^{\bbh, r}
(\bbr^{4n}) \subset \Gr_{4n-4}^{\bbh, r} (\bbr^{4n})$$ for the
corresponding bundles of subspaces. By the same
 reason, we have $F_{4,r}(\theta)=\L_q F_{4,r}(\L_q^{-1} \theta)$ which gives the
 opposite embedding. The proof of  equality $\R_q \Gr_{4n-4}^{\bbh, l} (\bbr^{4n}) =
 \Gr_{4n-4}^{\bbh, l} (\bbr^{4n})$
 is similar.
\end{proof}

The above consideration enables us to give precise setting of the
Busemann-Petty problem in $\bbk^n$
  and reformulate the latter
  as the  equivalent lower dimensional
 problem for $G$-invariant convex bodies
 in $\rN$. We recall that $$N=dn; \quad n>1; \quad d=1,2,4; \quad G\in \{G_{\bbr}, \, G_{\bbc}, \, G_{\bbh, l}, G_{\bbh,
 r}\};$$ see (\ref{bl1}) - (\ref{bl3}). We will be using the unified notation $\tilde \Gr_{N-d}
 (\bbr^N) $ for the respective manifolds
 $$ \Gr_{n-1} (\bbr^n), \quad \Gr_{2n-2}^{\bbc} (\bbr^{2n}),\quad \Gr_{4n-4}^{\bbh, l}
 (\bbr^{4n}),\quad \Gr_{4n-4}^{\bbh, r} (\bbr^{4n})$$
 of $(N-d)$-dimensional
 subspaces $H_\theta$ introduced above.

\noindent {\bf Problem A.} {\it Let $A$ and $B$ be equilibrated
 convex bodies in  $\bbk^n$, $n>1$,  satisfying
 \be vol_{n-1} (A\cap \xi)\le vol_{n-1} (B\cap \xi)\ee for all central $\bbk$-hyperplanes $\xi$. Does it follow that
$vol_n (A) \le vol_n (B)$? }

Here volumes of geometric objects in $\bbk^n$ are defined as usual
volumes of their $h$-images in  $\rN$,  for example, $$vol_n
(A)=vol_{N} (h(A)), \qquad vol_{n-1} (A\cap \xi)=vol_{N-d} \,(h
(A\cap \xi)).$$

The equivalent lower dimensional problem is formulated as follows.

\noindent {\bf Problem B.} {\it Let $K$ and $L$ be  $G$-invariant
convex bodies in $\rN$,   with section functions

\centerline{$S_K (\theta)= vol_{N-d} (K \cap H_\theta), \quad$
$\quad S_L (\theta)= vol_{N-d} (L \cap H_\theta)$,}

 \noindent where $H_\theta \in \tilde \Gr_{N-d}
 (\bbr^N)$.  Suppose that $S_K (\theta)\le S_L
 (\theta)$  for all $\theta \in S^{N-1}$. Does it follow that $vol_N (K) \le
vol_N (L)$?}

We notice a fundamental difference between the usual LDBP problem,
where sections by {\it all} $(N-d)$-dimensional subspaces are
compared, and   Problem $B$, where, in the cases $d=2$ and $4$, the
essentially smaller (actually, $(N-1)$-dimensional) collection of
subspaces comes into play.

Since the  question in Problem {\bf B} may have a negative answer,
we also consider the following more general problem, which is of
independent interest.

\noindent {\bf Problem C.} {\it  For  which operator $\D$ does the
 assumption  $ \D S_K (\theta)\le \D S_L (\theta)\quad \forall \theta \in S^{N-1} $ imply
 $vol_N (K) \le vol_N (L)$? }

\subsection{Vector fields on spheres} \label {mson} Theorem \ref{macr}  suggests
 intriguing links between possible generalizations
of Problems {\bf B} and {\bf C} and the celebrated  vector field
 problem, which asks for the maximal
number $\rho (d)$ of orthonormal tangent vector fields on the unit
sphere $S^{d-1}$ in $\bbr^d$.

We recall some facts; see \cite {Hes, Hus, Ad}. A continuous tangent
vector field on $S^{d-1}$ is defined to be a continuous function $
\V: S^{d-1} \to \bbr^d$ such that $\V(\sig) \in \sig^\perp$  for
every $\sig \in S^{d-1}$. If $\V(\sig)=A\sig$, where $A$ is a $d
\times d$ matrix,  the vector field $\V$ is called linear. Vector
fields $\V_1, \ldots , \V_k$ on $S^{d-1}$ are called orthonormal if
for every $\sig \in S^{d-1}$, the corresponding
 vectors $\V_1(\sig), \ldots ,  \V_k (\sig)$ form an orthonormal frame
 in $\bbr^d$. The following result is known as the Hurwitz-Radon-Eckmann theorem
\cite{Hu, Rad, E}; see also \cite{Og}.
\begin{theorem} Let $d$ be a positive integer and write $d=2^{4s+r}t$, where $t$ is an odd integer,
 $s$ and $r$ are integers with $s\ge 0$ and $0\le r<4$. Then the
 maximal
number of  orthonormal  linear tangent  vector fields on  $S^{d-1}$
is equal to $\rho (d)=2^r+8s -1$.
\end{theorem}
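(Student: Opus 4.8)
The plan is to convert the existence of $k$ orthonormal \emph{linear} tangent vector fields on $S^{d-1}$ into a representation of a real Clifford algebra on $\bbr^d$, and then to read off the answer from the classification of such algebras.

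\textbf{Step 1: algebraization.} Write a linear tangent vector field as $\V_i(\sig)=A_i\sig$ with $A_i\in M_d(\bbr)$. Tangency, $\sig\cdot A_i\sig=0$ for all $\sig$, forces the symmetric part of $A_i$ to vanish, so $A_i^T=-A_i$. Orthonormality of the frame $\V_1(\sig),\dots,\V_k(\sig)$ for every unit $\sig$ means $\sig\cdot A_i^TA_j\sig=\delta_{ij}$ identically; comparing the symmetric parts of the quadratic forms, this is equivalent to $A_i^TA_i=I_d$ and $A_i^TA_j+A_j^TA_i=0$ for $i\ne j$, hence, using $A_i^T=-A_i$, to
\be
A_i^2=-I_d,\qquad A_iA_j+A_jA_i=0\quad (i\ne j).
\ee
Conversely, given any $A_1,\dots,A_k\in M_d(\bbr)$ with these relations, the group they generate inside the invertible $d\times d$ matrices is finite (order $\le 2^{k+1}$), so after averaging the Euclidean inner product over that group and conjugating suitably we may assume the $A_i$ are orthogonal; then $A_i^T=A_i^{-1}=-A_i$, so each $A_i$ is skew, and $\V_i(\sig)=A_i\sig$ are orthonormal linear tangent vector fields on the round sphere. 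Thus the number we must compute is the largest $k$ for which $\bbr^d$ carries $k$ matrices satisfying the displayed relations.

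\textbf{Step 2: Clifford modules.} Such a system is precisely a module structure on $\bbr^d$ over the real Clifford algebra $\mathrm{Cl}_k$ generated by $e_1,\dots,e_k$ with $e_i^2=-1$ and $e_ie_j=-e_je_i$ ($i\ne j$), the generator $e_i$ acting by $A_i$. Since $\mathrm{Cl}_k$ is semisimple, every real representation is a direct sum of irreducibles; let $a_k$ be the real dimension of an irreducible $\mathrm{Cl}_k$-module (there is a unique such module when $\mathrm{Cl}_k$ is simple, and two of the same dimension when $\mathrm{Cl}_k\cong S\oplus S$). Consequently $\bbr^d$ admits a $\mathrm{Cl}_k$-module structure if and only if $a_k\mid d$; and when $a_k\mid d$ one obtains an explicit system by taking $d/a_k$ copies of an irreducible module and normalizing as in Step 1. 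Hence the maximal number of orthonormal linear tangent vector fields on $S^{d-1}$ equals $\max\{k\ge 0:\ a_k\mid d\}$, and it only remains to evaluate this.

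\textbf{Step 3: the mod-$8$ table and the count.} The Wedderburn types of $\mathrm{Cl}_0,\dots,\mathrm{Cl}_8$, namely
\be
\bbr,\ \bbc,\ \bbh,\ \bbh\oplus\bbh,\ M_2(\bbh),\ M_4(\bbc),\ M_8(\bbr),\ M_8(\bbr)\oplus M_8(\bbr),\ M_{16}(\bbr),
\ee
together with the periodicity $\mathrm{Cl}_{k+8}\cong\mathrm{Cl}_k\otimes_\bbr M_{16}(\bbr)$, give $a_k=1,2,4,4,8,8,8,8,16$ for $k=0,\dots,8$ and $a_{k+8}=16\,a_k$. Every $a_k$ is a power of $2$, with $2$-adic exponent $v_2(a_k)=0,1,2,2,3,3,3,3,4$ for $k=0,\dots,8$ and $v_2(a_{k+8})=v_2(a_k)+4$. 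Since $a_k\mid d$ iff $v_2(a_k)\le v_2(d)$, writing $d=2^{4s+r}t$ with $t$ odd and $0\le r<4$ (so $v_2(d)=4s+r$) and inspecting the table shows the maximal admissible $k$ is $8s$ when $r=0$, $8s+1$ when $r=1$, $8s+3$ when $r=2$, and $8s+7$ when $r=3$; in all four cases this equals $8s+2^r-1=\rho(d)-1$, which is the assertion. The substantive input here — and the historically hard part — is Step 3's ingredients, i.e. the classification of the real Clifford algebras and their mod-$8$ periodicity, equivalently the sharpness of the upper bound (that no anticommuting family of size $\rho(d)$ fits on $\bbr^d$); this is the part going back to Hurwitz and Radon and recast representation-theoretically by Eckmann through the finite $2$-group generated by $e_1,\dots,e_k$, whereas the matching constructions realizing $\rho(d)-1$ fields are elementary and follow the tensor-product recursion behind the periodicity.
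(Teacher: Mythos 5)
Your argument is sound, but note first that the paper does not prove this theorem at all: it is quoted as the classical Hurwitz--Radon--Eckmann theorem with references to Hurwitz, Radon, Eckmann, and Ognikyan, so there is no internal proof to compare against. What you supply is the standard modern proof via real Clifford modules, which is essentially Eckmann's representation-theoretic argument in polished form. Your Step 1 (reduction of orthonormal linear tangent fields to a family of anticommuting orthogonal complex structures $A_i^2=-I$, $A_iA_j+A_jA_i=0$) reproduces and slightly extends the content of the paper's Lemma \ref{12as}, and the averaging trick for the converse direction is correct; Step 2's equivalence with $\mathrm{Cl}_k$-module structures and the divisibility criterion $a_k\mid d$ is correct because all irreducible modules of these semisimple algebras have the same dimension; and the mod-$8$ table yields $\max\{k:\ a_k\mid d\}=8s+2^r-1$ as you compute. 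The honest caveat, which you state yourself, is that the classification of the algebras $\mathrm{Cl}_k$ and their periodicity is imported as known rather than proved.

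One slip you should fix: the paper defines $\rho(d)=2^r+8s-1$ to be the maximal number of fields itself, whereas your closing remarks implicitly use Adams' convention, in which the Hurwitz--Radon number is $2^r+8s$ and the number of fields is that quantity minus one. Consequently the chain ``$8s+2^r-1=\rho(d)-1$'' is false as written with the paper's $\rho$ (the left side equals $\rho(d)$, not $\rho(d)-1$); likewise the obstruction in your last sentence is to anticommuting families of size $\rho(d)+1$, not $\rho(d)$, and the constructions realize $\rho(d)$ fields, not $\rho(d)-1$. The quantity you actually derived, $8s+2^r-1$, is exactly the theorem's $\rho(d)$, so the mathematics lands in the right place; only the labels need correcting.
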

 The number  $\rho
(d)$ is called the Radon-Hurwitz number. It is zero when $d$ is odd.
Adams \cite {Ad} extended this result to continuous
 vector fields. He proved that there  are at most $\rho (d)$ linearly
independent continuous
 tangent vector fields on  $S^{d-1}$.

In the case $\rho (d)=d-1$, when there exist a complete orthonormal
 system
 of linear tangent  vector fields $\{\V_1, \ldots , \V_{d-1}\}$ on
$S^{d-1}$,  the sphere
 $S^{d-1}$ is called {\it parallelizable}. The only parallelizable
 spheres are $S^1$, $S^3$, and $S^7$; see  Kervaire \cite{Ke},  Bott and
 Milnor \cite{BM}.

Complete systems of orthonormal linear tangent  vector fields on
$S^3$, namely,  $\{A_1 \sig, A_2 \sig, A_3\sig\}$ and $\{A'_1 \sig,
A'_2 \sig, A'_3 \sig\}$, where considered in Theorem \ref{macr}.
These  produce a series of new examples, for instance, \be\label
{lxxr} \{[\gam^{-1} A_1 \gam ] \sig, \;\;[\gam^{-1} A_2 \gam ]
\sig,\;\; [\gam^{-1} A_3 \gam ]\sig\}, \qquad \forall \gam \in
O(4).\ee

A complete system of orthonormal tangent linear vector fields on
$S^7$ can be constructed, e.g., as follows.

${}\quad  $If $\;\sig\;\;  =\; \; (\sig_1,\; \sig_2,\; \sig_3,\;
\sig_4,\; \sig_5,\; \sig_6,\; \sig_7,\; \sig_8)^T\in S^7$, then
\[
\begin{array}{rrrrrrrrrr}
A_1 \sig  & = & (\sig_2, & -\sig_1, &  \sig_4,  &  -\sig_3, & \sig_6, & -\sig_5, &  -\sig_8,  & \sig_7)^T, \\

A_2 \sig  & = & (\sig_3, & -\sig_4, &  -\sig_1, &  \sig_2, & \sig_7,  & \sig_8,  & -\sig_5,  & -\sig_6)^T, \\

A_3 \sig  & = & (\sig_4, &  \sig_3, &  -\sig_2, & -\sig_1, &  \sig_8, &   -\sig_7, &  \sig_6, & -\sig_5)^T, \\

A_4 \sig  & = & (\sig_5, &  -\sig_6, & -\sig_7, &
-\sig_8,&-\sig_1, &  \sig_2, &  \sig_3, & \sig_4)^T, \\

A_5 \sig  & = & (\sig_6, &  \sig_5, & -\sig_8,  &  \sig_7, &
-\sig_2, &  -\sig_1, & -\sig_4, &  \sig_3)^T, \\

A_6 \sig  & = & (\sig_7, &  \sig_8, &  \sig_5,  & -\sig_6, &
-\sig_3, &  \sig_4, & -\sig_1, &  -\sig_2)^T, \\

A_7 \sig  & = & (\sig_8, & -\sig_7, &   \sig_6, &  \sig_5, &
-\sig_4, & -\sig_3, & \sig_2, &  -\sig_1)^T.
\end{array}
\]
The corresponding matrices $A_i$, which are determined by
permutation of indices of coordinates $\sig_1, \ldots , \sig_8$ and
arrangements of $\pm$ signs, belong to $SO(8)$. More systems can be
constructed, e.g., as in (\ref{lxxr}).

The following statement can be found in \cite{Hes} in a slightly
more general form. For the sake of completeness, we present it with
proof.
\begin{lemma} \label {12as} ${}$\hfill

{\rm (i)} If $\sig\to A\sig$ is a linear tangent vector field on $
S^{d-1}$, then the $d\times d$ matrix $A$ is skew symmetric, that
is, $A+A^T=0$.

{\rm (ii)} If ${\bf A}\sig=\{A_i\sig\}_{i=1}^{d-1}$ is an
orthonormal system of linear tangent vector fields on $ S^{d-1}$,
then
$$
A_i^T A_j+ A_j^T A_i=0 \quad \mbox{\rm for all}\quad 1\le i<j\le
d-1,$$
$$
A_i^T A_i=I \quad \mbox{\rm for all}\quad 1\le i \le d-1.$$
\end{lemma}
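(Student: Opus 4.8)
The plan is to exploit the defining property of a tangent vector field---that $A\sigma$ is orthogonal to $\sigma$ for every $\sigma$---and to "polarize" this quadratic identity. For part (i), the hypothesis $\V(\sigma)=A\sigma\in\sigma^\perp$ means $\sigma\cdot A\sigma=0$, i.e. $\sigma^T A\sigma=0$, for all $\sigma\in S^{d-1}$. Since this is a homogeneous quadratic form in $\sigma$ vanishing on the sphere, it vanishes identically on $\bbr^d$: $x^T Ax=0$ for all $x\in\bbr^d$. Writing $x^TAx=x^T\bigl(\tfrac12(A+A^T)\bigr)x$ and using that a symmetric matrix is determined by its associated quadratic form, I conclude $A+A^T=0$, which is exactly skew-symmetry. (Alternatively, one substitutes $x=e_i$, $x=e_i+e_j$ to read off $A_{ii}=0$ and $A_{ij}+A_{ji}=0$ directly.)

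For part (ii), I would first record that each $A_i\sigma$ being a tangent vector field forces $A_i+A_i^T=0$ by part (i). The orthonormality hypothesis says that for every $\sigma\in S^{d-1}$ the vectors $A_1\sigma,\dots,A_{d-1}\sigma$ form an orthonormal frame, i.e.
\be
(A_i\sigma)\cdot(A_j\sigma)=\del_{ij}\qquad\text{for all }\sigma\in S^{d-1},\ 1\le i,j\le d-1.
\ee
Rewriting the left side as $\sigma^T A_i^T A_j\sigma$, this is again a homogeneous quadratic identity on the sphere, hence valid for all $x\in\bbr^d$: $x^T A_i^T A_j x=\del_{ij}\,\lVert x\rVert^2$. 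Taking $i=j$ gives $x^T(A_i^TA_i)x=\lVert x\rVert^2$ for all $x$; since $A_i^TA_i$ is symmetric, it equals $I$, which is the second claimed identity (it also re-proves that each $A_i\in SO(d)$ up to sign, consistent with the examples). Taking $i\neq j$ gives $x^T(A_i^TA_j)x=0$ for all $x$, so the symmetric part of $A_i^TA_j$ vanishes, i.e. $A_i^TA_j+(A_i^TA_j)^T=A_i^TA_j+A_j^TA_i=0$, which is the first claimed identity.

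There is no real obstacle here; the only point requiring a word of care is the passage from "quadratic form vanishes (or equals $\lVert x\rVert^2$) on $S^{d-1}$" to "the same holds on all of $\bbr^d$," which follows by homogeneity ($x=\lVert x\rVert\,(x/\lVert x\rVert)$ for $x\neq 0$, and the case $x=0$ is trivial), together with the standard fact that a real symmetric matrix $M$ with $x^TMx=0$ for all $x$ must be zero. I would state this elementary lemma in one line rather than belabor it. The whole argument is thus a two-step polarization: reduce to quadratic identities via the tangency/orthonormality hypotheses, then extend by homogeneity and use symmetry to peel off the matrix identities.
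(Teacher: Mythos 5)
Your proof is correct and follows essentially the same route as the paper: extend the tangency/orthonormality identities from $S^{d-1}$ to all of $\bbr^d$ by homogeneity, then polarize the resulting quadratic identities to read off the matrix relations (the paper carries out the polarization explicitly via $(x+y)\cdot A(x+y)$, while you invoke the equivalent standard fact that a symmetric matrix is determined by its quadratic form).
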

\begin{proof} {\rm (i)} Let $\sig \cdot A\sig=0$ for all  $\sig\in
S^{d-1}$. Equivalently, $x \cdot Ax=0$ for all  $x\in \bbr^{d}$.
Then, for all $x,y \in \bbr^{d}$, \bea &&x \cdot (A+A^T)y=x \cdot Ay
+Ax \cdot y\nonumber\\&&= x \cdot Ax +x \cdot Ay+Ax \cdot y+Ay \cdot
y=(x+y)\cdot A(x+y)=0.\nonumber\eea Hence, $A+A^T=0$.

{\rm (ii)} As above, for all $x,y \in \bbr^{d}$ we have
$$
x\cdot (A_i^T A_j+ A_j^T A_i)y= A_i (x+y)\cdot A_j (x+y)=0,
$$
$$
x\cdot (A_i^T A_i -I)y=\frac{1}{2}\big [A_i (x+y)\cdot A_i (x+y)
-(x+y)\cdot (x+y)\big ]=0.
$$
This gives the result.
\end{proof}
\begin{lemma} Let ${\bf A}\sig=\{A_i\sig\}_{i=1}^{d-1}$ be
 an orthonormal system of linear tangent vector fields on
$ S^{d-1}$; $A_0=I$. Then \be \label {775} g_\lam ({\bf
A})\equiv\sum\limits_{i=0}^{d-1}\lam_i A_i\in O(d)\ee for every
$\lam=(\lam_1, \ldots , \lam_n)\in S^{d-1}$.
\end{lemma}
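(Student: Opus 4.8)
The plan is to verify directly that $g\equiv g_\lambda(\mathbf{A})$ is an orthogonal matrix, i.e. that $g^Tg=I_d$, from which $g\in O(d)$ follows. First I would use linearity of transposition and expand the product, separating diagonal from off-diagonal contributions:
$$ g^Tg=\Big(\sum_{i=0}^{d-1}\lam_i A_i^T\Big)\Big(\sum_{j=0}^{d-1}\lam_j A_j\Big)=\sum_{i=0}^{d-1}\lam_i^2\,A_i^TA_i+\sum_{0\le i<j\le d-1}\lam_i\lam_j\,(A_i^TA_j+A_j^TA_i). $$

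The main step is to show that the off-diagonal sum vanishes term by term. For indices $1\le i<j\le d-1$ this is exactly the first identity in Lemma~\ref{12as}(ii). The terms containing the index $0$ are not covered by that identity, but since $A_0=I$ such a term equals $\lam_0\lam_j(A_j+A_j^T)$, which is zero by the skew-symmetry assertion of Lemma~\ref{12as}(i) applied to the linear tangent vector field $\sig\mapsto A_j\sig$. Hence the whole off-diagonal sum is $0$.

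For the diagonal sum I would use $A_0^TA_0=I_d$ (trivially) together with $A_i^TA_i=I_d$ for $1\le i\le d-1$ (the second identity of Lemma~\ref{12as}(ii)), obtaining $g^Tg=\big(\sum_{i=0}^{d-1}\lam_i^2\big)I_d=|\lam|^2I_d=I_d$ because $\lam\in S^{d-1}$; this gives $g_\lambda(\mathbf{A})\in O(d)$. I do not expect any genuine obstacle here beyond the bookkeeping; the one point easy to overlook is that the cross terms involving the index $0$ must be treated via part (i) of Lemma~\ref{12as}, since the displayed identities in part (ii) are stated only for indices $\ge 1$.
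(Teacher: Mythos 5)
Your proposal is correct and follows exactly the paper's approach: verify $g^Tg=I$ by expanding the double sum and applying Lemma~\ref{12as}. The paper's own proof simply writes $\sum_{i,j=0}^{d-1}\lam_i\lam_j A_i^T A_j=I$ and cites Lemma~\ref{12as} without further comment; you correctly observe the one subtlety the paper glosses over, namely that the cross terms $\lam_0\lam_j(A_j+A_j^T)$ involving the index $0$ fall outside the range of the identities in part (ii) and must instead be killed by the skew-symmetry statement of part (i). This is a legitimate and useful clarification of the same argument, not a different route.
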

\begin{proof} By Lemma \ref{12as},
$$
g_\lam ({\bf A})^T g_\lam ({\bf A})=\Big
(\sum\limits_{i=0}^{d-1}\lam_i A_i^T \Big ) \Big
(\sum\limits_{j=0}^{d-1}\lam_j A_j \Big
)=\sum\limits_{i,j=0}^{d-1}\lam_i \lam_j A_i^T A_j=I.
$$
Hence, $g_\lam ({\bf A})\in O(d)$.
\end{proof}

Some notation are in order.
\begin{definition} \label {9981} Let $N=dn$, $d \in \{2,4,8\}, \; n>1$. Given an orthonormal system
 ${\bf A}\sig=\{A_i\sig\}_{i=1}^{d-1}$ of linear tangent vector fields
  on $ S^{d-1}$, we denote \bea \label {086s} \qquad \G_\lam ({\bf A})\!\!\!&=&\!\!\!\diag\, (g_\lam ({\bf A}), \,
  \ldots \, ,g_\lam ({\bf A}))\\
&=&\!\!\!\diag\, \Big (\sum\limits_{i=0}^{d-1}\lam_i A_i, \ldots ,
\sum\limits_{i=0}^{d-1}\lam_i A_i \Big ) \qquad \mbox{\rm ($n$ equal
blocks)},\nonumber\eea where $\lam \in S^{d-1}$. The corresponding
class of block diagonal orthogonal transformations of $\bbr^{N}$
(with $n$ equal $d\times d$ diagonal blocks), generated by ${\bf
A}$, is defined by \be  \label {247} G\!\equiv \!G(n,d; {\bf
A})\!=\!\{g\!\in \!O(N): \, g\!=\!\G_\lam ({\bf A}) \; \,\mbox{\rm
for some}\; \,\lam \!\in\! S^{d-1}\}.\ee We also introduce $N\times
N$ block diagonal matrices, containing $n$ blocks: \be \A_i=\diag
(A_i, \ldots , A_i) \qquad (i=1,2, \ldots , d-1), \ee and set
$\A_0=I_N$. Given $\theta \in S^{N-1}$, we denote by
 $H_\theta$  the $(N-d)$-dimensional subspace
orthogonal to  the $d$-frame \be \label {d453} F_d(\theta)=[\theta,
\A_1 \theta, \ldots ,  \A_{d-1} \theta ]\in V_{N,d}\ee
 and set
 \be \label {250}
\tilde \Gr_{N-d} (\bbr^N)=\{H_\theta:\; \theta \in S^{N-1}\}.\ee
\end{definition}

All objects in Definition \ref{9981} are familiar to us when $d=2,4$
(see Section \ref {3344}). Thus, Problems {\bf B} and {\bf C} extend
to the case $d=8$.

We recall  that the set $G$ of transformations  and  the set $\tilde
\Gr_{N-d} (\bbr^N)$ of planes  are determined by the orthonormal
system  ${\bf A}=\{A_i\}_{i=1}^{d-1}$ of  vector fields, which is
assumed to be fixed.

 The following lemma plays a crucial role in our consideration.
\begin{lemma}\label {L22} If $H \in \tilde \Gr_{N-d}
 (\bbr^N)$, then every continuous $G$-invariant function $f$ on $S^{N-1}$
  is constant on the $(d-1)$-dimensional section
$S^{N-1}\cap H^\perp$.
\end{lemma}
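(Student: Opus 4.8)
The plan is to show that the section $S^{N-1}\cap H_\theta^\perp$ is precisely the $(d-1)$-sphere swept out by the $G$-orbit of $\theta$, after which $G$-invariance of $f$ forces it to be constant there. First I would observe that, by Definition \ref{9981}, $H_\theta$ is the orthogonal complement of the $d$-frame $F_d(\theta)=[\theta,\A_1\theta,\dots,\A_{d-1}\theta]$, so $H_\theta^\perp = \mathrm{span}\{\theta,\A_1\theta,\dots,\A_{d-1}\theta\}$, a $d$-dimensional subspace. Hence $S^{N-1}\cap H_\theta^\perp$ is a $(d-1)$-dimensional sphere: it consists exactly of the unit vectors $\sum_{i=0}^{d-1}\lambda_i\A_i\theta$ with $\lambda=(\lambda_0,\dots,\lambda_{d-1})$ ranging over $S^{d-1}$ (here $\A_0=I_N$). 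The point is that this parametrization coincides with the $G$-orbit of $\theta$.

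Next I would invoke the previous lemma: for $\lambda\in S^{d-1}$ the matrix $g_\lambda(\mathbf{A})=\sum_{i=0}^{d-1}\lambda_i A_i$ lies in $O(d)$, and therefore $\G_\lambda(\mathbf{A})=\diag(g_\lambda(\mathbf{A}),\dots,g_\lambda(\mathbf{A}))$ lies in $G\subset O(N)$. Applying $\G_\lambda(\mathbf{A})$ to $\theta$ gives, block by block, exactly $\sum_{i=0}^{d-1}\lambda_i\A_i\theta$, since $\G_\lambda(\mathbf{A})=\sum_{i=0}^{d-1}\lambda_i\A_i$. Because $g_\lambda(\mathbf{A})$ is orthogonal and $\A_i$ are block-diagonal orthogonal matrices, $|\G_\lambda(\mathbf{A})\theta|=|\theta|=1$, so every such vector is genuinely a unit vector. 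This shows the $G$-orbit $\{\G_\lambda(\mathbf{A})\theta:\lambda\in S^{d-1}\}$ is contained in $S^{N-1}\cap H_\theta^\perp$; a dimension/spanning count (the orbit contains $\theta,\A_1\theta,\dots,\A_{d-1}\theta$ themselves, by taking $\lambda$ to be the standard basis vectors, and these span the $d$-dimensional space $H_\theta^\perp$) gives the reverse inclusion, so the orbit is all of $S^{N-1}\cap H_\theta^\perp$.

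Finally, since $f$ is $G$-invariant, $f(gx)=f(x)$ for all $g\in G$, so $f$ is constant on each $G$-orbit; in particular $f$ is constant on $S^{N-1}\cap H_\theta^\perp$, equal to the value $f(\theta)$. The main obstacle — really the only nontrivial point — is verifying that $S^{N-1}\cap H_\theta^\perp$ is exhausted by the orbit and not strictly larger; this is handled by noting both sets are $(d-1)$-spheres sitting inside the same $d$-dimensional subspace $H_\theta^\perp$, with the orbit containing a spanning set. One should also check that the $\A_i\theta$ really are orthonormal (so that $F_d(\theta)\in V_{N,d}$ as asserted, and $H_\theta^\perp$ is genuinely $d$-dimensional): this follows from Lemma \ref{12as}(ii), which gives $\A_i^T\A_j+\A_j^T\A_i=0$ for $i\neq j$ and $\A_i^T\A_i=I_N$, hence $\A_i\theta\cdot\A_j\theta = \theta\cdot\A_i^T\A_j\theta=\tfrac12\,\theta\cdot(\A_i^T\A_j+\A_j^T\A_i)\theta=0$ for $i\neq j$ and $|\A_i\theta|=1$ — and likewise $\theta\cdot\A_i\theta=0$ by skew-symmetry from Lemma \ref{12as}(i). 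With these orthonormality facts in hand the proof is just the orbit identification above.
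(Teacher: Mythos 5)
Your proof is correct and follows essentially the same route as the paper's: identify $S^{N-1}\cap H_\theta^\perp$ with the set $\{\G_\lambda(\mathbf{A})\theta : \lambda\in S^{d-1}\}\subset G\theta$ and then invoke $G$-invariance of $f$. The paper's version is terser (it states the representation $\eta=\sum_i\lambda_i\A_i\theta$, $\sum\lambda_i^2=1$, without spelling out the orthonormality of the frame), while you fill in that verification via Lemma \ref{12as}; this is the same argument, just with the implicit step made explicit.
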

\begin{proof} Let
$H\equiv H_\theta$ be orthogonal to some $d$-frame (\ref{d453}). Any
point $\eta \in S^{N-1}\cap H^\perp$ is represented as
$$
\eta =\sum\limits_{i=0}^{d-1} \lam_i \A_i \theta, \qquad
\sum\limits_{i=0}^{d-1} \lam_i^2 =1,$$ or $\eta=\G_\lam ({\bf
A})\,\theta$; see (\ref{086s}). In particular, if $d=4$ and $A_i$
have the form (\ref{pr3p}), then  $\G_\lam ({\bf A})$ is a block
diagonal matrix with $n$ equal blocks of the form
$$
\sum\limits_{i=0}^3 \lam_i A_i=\left[
\begin{array}{cccc}
\lam_0 & -\lam_1 & -\lam_2 & -\lam_3 \\
\lam_1 &  \;\;\, \lam_0 & -\lam_3 &  \;\;\,\lam_2 \\
\lam_2 &  \;\;\,\lam_3 &  \;\;\,\lam_0 & -\lam_1 \\
\lam_3 & -\lam_2 &  \;\;\,\lam_1 &  \;\;\,\lam_0 \end{array} \right
]= L_\lam,$$
$$
\lam=\lam_0 e_0+\lam_1 e_1+\lam_2 e_2+\lam_3 e_3\in \bbh; \quad
\mbox{\rm (cf. (\ref{f7}))}.$$ Since $\G_\lam ({\bf A})\in G$, then
$f(\eta)=f(\G_\lam ({\bf A})\,\theta)=f(\theta)$. This gives the
result.
\end{proof}

\section{Cosine transforms and intersection bodies}

It is known \cite {R4, R5, R7, RZ} that diverse Busemann-Petty type
problems can be studied using analytic families of cosine transforms
on the unit sphere. This approach is parallel, in a sense, to the
Fourier transform method developed by Koldobsky and his
collaborators \cite {K, KY}. We shall see how these transforms can
be applied to   Problems {\bf A}, {\bf B}, and {\bf C} stated above.

 \subsection{Spherical Radon transforms and cosine transforms}
 We recall some basic facts; see \cite {R3, R7}. Fix an integer $i \in \{2,3, \ldots,
 N-1\}$ and let $\Gr_i(\bbr^N)$ be the Grassmann manifold of all
 $i$-dimensional linear subspaces $\xi$ of $\bbr^N$.
  The spherical Radon transform, that integrates
 a function $f \in L^1 (S^{N-1})$ over $(i-1)$-dimensional
 sections $S^{N-1}\cap \xi$, is defined by
 \be\label{rts}
 (R_i f)(\xi) = \int_{\theta \in S^{N-1}\cap\xi} f(\theta) \, d_\xi \theta,
\ee where $d_\xi \theta$  denotes the probability measures on
$S^{N-1}\cap\xi$. The  case $i=N-1$ in (\ref{rts}) is known as the
Minkowski-Funk transform \be\label{mf} (M f)(u)=\int_{\{\theta \,:
\, \theta \cdot u =0\}} f(\theta) \,d_u\theta=(R_{N-1} f)(u^\perp),
\qquad u \in S^{N-1}. \ee Transformation (\ref{rts}) can be regarded
 as a member (up to a multiplicative constant) of the analytic family
of the generalized cosine transforms
 \be\label{rka} (R_i^\a f)(\xi)=\gam_{N,i}(\a)\,
\int_{S^{N-1}} |\text{\rm Pr}_{\xi^\perp} \theta|^{\a+i-N} \,
f(\theta) \, d\theta,\ee
$$\gam_{N,i}(\a)\!=\!\frac{ \sig_{N-1}\,\Gamma((N\! - \!\a\!-\! i)/2)} {2\pi^{(N-1)/2} \,
\Gamma(\a/2)}, \quad Re \, \a \!>\!0, \quad \a+i-N \neq 0,2,4,
\ldots .$$ Here  $\text{\rm Pr}_{\xi^\perp} \theta $ stands for
 the  orthogonal
 projection of $\theta$ onto $\xi^\perp$.
If $f$ is smooth and $Re \, \a \le 0$, then $R_i^\a f$ is understood
as analytic continuation of integral (\ref{rka}), so that
\be\label{lim} \lim\limits_{\a \to 0} R_i^\a f=R_i^0 f =c_i \,R_if,
\qquad c_i=\frac{\sig_{i-1}}{2\pi^{(i-1)/2}}.\ee In the  case
$i=N-1$ we also set \be\label{af} (M^\a f)(u)=(R_{N-1}^\a
f)(u^\perp)= \gam_N(\a)\, \int_{S^{N-1}} f(\theta) |\theta \cdot
u|^{\a-1} \,d\theta, \ee \be\label{beren} \gam_N(\a)\!=\!{
\sig_{N-1}\,\Gamma\big( (1\!-\!\a)/2\big)\over 2\pi^{(N-1)/2} \Gamma
(\a/2)}, \qquad Re \, \a \!>\!0, \quad \a \!\neq \!1,3,5, \ldots
.\ee
\begin{lemma}\label{l1} \cite[Lemma 3.2]{R7} Let $\a, \b \in \bbc; \; \a, \b \neq
1,3,5, \ldots \,$. If $\a+\b=2-N$ and $f\in \D_e(S^{N-1})$ then
\be\label{st}M^\a M^{\b}f=f.\ee If $\a, 2\!-\!N\!-\!\a\! \neq\!
1,3,5, \ldots $, then $M^\a$ is an automorphism of $\D_e(S^{N-1})$.
\end{lemma}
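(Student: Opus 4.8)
The plan is to work on the level of spherical harmonic expansions, where the operators $M^\a$ act as multipliers. Recall that on $L^2(S^{N-1})$ one has the decomposition into spaces $\H_m$ of spherical harmonics of degree $m$, and that the Minkowski-Funk type operators $M^\a$ are $SO(N)$-equivariant, hence act on each $\H_m$ by a scalar $m_\a(m)$. For even $m$ the multiplier is nonzero (this is exactly why $M^\a$ can be inverted on even functions), while odd harmonics are annihilated in a way consistent with the restriction to $\D_e(S^{N-1})$; the relevant formula (from the Funk-Hecke theorem applied to the kernel $|\theta\cdot u|^{\a-1}$, together with the normalizing constant $\gam_N(\a)$ chosen in (\ref{beren})) gives, for even $m$,
\be
m_\a(m)=\frac{\Gamma\big((m+1-\a)/2\big)\,\Gamma\big((N-1)/2\big)}{\Gamma\big((m+N-1+\a-2)/2\big)\,\Gamma\big((1-\a)/2\big)}
\ee
up to a possible sign depending on conventions; the precise closed form is standard (see \cite{R7}). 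The exact shape does not matter; what matters is the functional equation it satisfies under $\a\mapsto 2-N-\a$.

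First I would verify the semigroup-type identity: for even $m$ and for $\a,\b$ avoiding the excluded odd integers with $\a+\b=2-N$, the product $m_\a(m)\,m_\b(m)$ telescopes to $1$. This is a direct Gamma-function computation: substituting $\b=2-N-\a$ into the displayed multiplier and multiplying, the numerator of one factor cancels the denominator of the other because the arguments $(m+1-\a)/2$, $(m+N-1+\b-2)/2=(m-1-\a)/2+\ldots$ match up after using $\b=2-N-\a$; likewise the $\Gamma((N-1)/2)$ and $\Gamma((1-\a)/2)$, $\Gamma((1-\b)/2)$ factors pair off. Since $f\in\D_e(S^{N-1})$ has an expansion in even harmonics only, and since each $M^\a$ ($\a$ not an excluded value) maps $\D_e(S^{N-1})$ continuously into itself (because the multipliers for even $m$ have at most polynomial growth in $m$, by Stirling applied to the Gamma quotient — this is precisely the content already used in \cite{R7}), applying $M^\a$ then $M^\b$ multiplies the $m$-th component by $m_\a(m)m_\b(m)=1$, which yields $M^\a M^\b f=f$. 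That proves (\ref{st}).

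For the automorphism claim, note that when neither $\a$ nor $2-N-\a$ is among $1,3,5,\ldots$, the multiplier $m_\a(m)$ is finite and nonzero for every even $m$ (no pole or zero of the Gamma quotient is hit — the potential poles from $\Gamma((m+1-\a)/2)$ in a ``dual'' sense are exactly excluded by the hypothesis on $2-N-\a$, and the potential zeros coming from poles of $\Gamma((1-\a)/2)$ are excluded by the hypothesis on $\a$). Hence $M^\b$ with $\b=2-N-\a$ is a two-sided inverse of $M^\a$ on $\D_e(S^{N-1})$ by the identity just proved (applied also with the roles of $\a,\b$ swapped, which is legitimate since the condition $\a+\b=2-N$ is symmetric). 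Continuity of both $M^\a$ and $M^\b$ on $\D_e(S^{N-1})$ in the standard $C^\infty$ topology — again from the polynomial bounds on the multipliers and their reciprocals — gives that $M^\a$ is a topological automorphism.

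The main obstacle is the bookkeeping of the excluded sets and the sign/constant in the multiplier: one must be careful that the normalization $\gam_N(\a)$ in (\ref{beren}) is exactly the one that makes the even-degree multiplier equal to the Gamma quotient above with leading constant $1$, and that the poles of $\Gamma(\a/2)$ in the denominator of $\gam_N(\a)$ (at $\a=0,-2,-4,\ldots$) are harmless because they are cancelled by the analytic continuation, not by the multiplier formula. Once the multiplier is pinned down, everything reduces to the elementary Gamma identity $\Gamma(z)\Gamma(1-z)=\pi/\sin\pi z$ and shifts thereof. Since this is Lemma 3.2 of \cite{R7}, I would in fact simply cite it, but the sketch above is the self-contained argument.
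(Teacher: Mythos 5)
Your argument is correct and follows exactly the route the paper itself indicates: the lemma is merely cited from \cite{R7}/\cite{Se2}, with the remark that both statements are an immediate consequence of the spherical harmonic decomposition of $M^\a f$, which is precisely your multiplier computation (telescoping of the even-degree multipliers under $\a\mapsto 2-N-\a$, plus polynomial two-sided bounds for continuity on $\D_e(S^{N-1})$). The only caveat is that your displayed multiplier is not quite the correct one for the normalization (\ref{beren}); the right formula is $m_\a(j)=(-1)^{j/2}\,\Gamma\bigl(\tfrac{j+1-\a}{2}\bigr)/\Gamma\bigl(\tfrac{j+N-1+\a}{2}\bigr)$ for even $j$, for which the identity $m_\a(j)\,m_{2-N-\a}(j)=1$ and the pole/zero bookkeeping (poles exactly at $\a\in\{1,3,5,\dots\}$, zeros exactly when $2-N-\a\in\{1,3,5,\dots\}$) go through exactly as you describe.
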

\begin{corollary} The Minkowski-Funk transform on the space
$\D_e(S^{N-1})$
 can be inverted by the formula
\be\label{mmm} (M)^{-1}=c_{N-1}\,M^{2-N}, \qquad
c_{N-1}=\frac{\sig_{N-2}}{2\pi^{(N-2)/2}}.\ee
\end{corollary}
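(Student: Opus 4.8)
The plan is to deduce the inversion formula for the Minkowski--Funk transform directly from Lemma~\ref{l1} by specializing the parameters. Recall from \eqref{lim} that the genuine spherical Radon transform $R_{N-1}$ and its analytic-family representative $M^0=R_{N-1}^0$ differ only by the constant $c_{N-1}=\sig_{N-2}/2\pi^{(N-2)/2}$, namely $M^0 f = c_{N-1}\, R_{N-1} f = c_{N-1}\, M f$. So it suffices to express $(M^0)^{-1}$ in terms of the analytic family and then peel off the constant.

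First I would set $\a=0$ and $\b=2-N$ in Lemma~\ref{l1}. One checks the admissibility hypotheses: $\a=0$ is not among $1,3,5,\dots$, and $2-N-\a=2-N$; since $N>1$ (indeed $N=dn$ with $n>1$), $2-N\le 0$, so $2-N$ is not a positive odd integer either, and the constraint $\a+\b=2-N$ holds by construction. Hence Lemma~\ref{l1} gives $M^0 M^{2-N} f = f$ and $M^{2-N} M^0 f = f$ for every $f\in\D_e(S^{N-1})$; that is, $M^{2-N}=(M^0)^{-1}$ on $\D_e(S^{N-1})$, and both maps are automorphisms of $\D_e(S^{N-1})$ by the last sentence of the lemma.

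Then I would substitute $M^0 = c_{N-1} M$ into $M^{2-N} M^0 = \mathrm{id}$ to get $c_{N-1}\, M^{2-N} M = \mathrm{id}$, i.e.\ $\bigl(c_{N-1} M^{2-N}\bigr) M = \mathrm{id}$, and similarly $M\bigl(c_{N-1} M^{2-N}\bigr)=\mathrm{id}$ from the other identity. This is exactly \eqref{mmm}: $M^{-1} = c_{N-1}\, M^{2-N}$ as operators on $\D_e(S^{N-1})$.

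There is essentially no obstacle here --- the corollary is a one-line specialization of Lemma~\ref{l1} combined with the normalization \eqref{lim}. The only points requiring a moment's care are (a) verifying that $\a=0$ and $\b=2-N$ are legitimate (not excluded odd integers), which uses $N>1$, and (b) keeping track of the constant $c_{N-1}$ when passing between the normalized Minkowski--Funk transform $M$ and its place $M^0$ in the analytic family; since $M^0 f$ is defined via the limit in \eqref{lim} rather than literally by the integral \eqref{af} at $\a=0$, I would note that \eqref{lim} is precisely what licenses writing $M^0=c_{N-1}M$ on smooth even functions.
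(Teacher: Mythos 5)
Your derivation is correct and is exactly the intended one: the corollary is positioned in the paper as an immediate consequence of Lemma~\ref{l1} (with $\a=0$, $\b=2-N$) combined with the normalization $M^0=c_{N-1}M$ coming from \eqref{lim}, which is precisely the specialization you carry out. The admissibility checks you make (neither $0$ nor $2-N$ is a positive odd integer, using $N\ge 2$) are the right ones, so there is nothing to add.
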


Both statements amount to Semyanisty \cite{Se2}, who used the
Fourier transform techniques. They can also  be obtained as
immediate consequence of the spherical harmonic decomposition of
$M^\a f$.
\begin{lemma}\label{l2} \cite[Lemma 3.5]{R7} Let $Re\, \a >0;\; \a \neq 1,3,5, \ldots \,
$. If $f \!\in \!L^1(S^{N-1})$, then \be \label{con} (R_i M^\a
f)(\xi)\!=\!c\,
 (R_{N-i}^{\a +i-1}
f)(\xi^\perp), \quad \xi \!\in \!\Gr_{i}(\rN),\quad c\!=\!
\frac{2\pi^{(i-1)/2}}{\sig_{i-1}},\ee \be \label{conn} (R_{N-i} M^\a
f)(\xi^\perp) = \frac{2\pi^{(N-i-1)/2}}{\sig_{N-i-1}} \, (R_i^{\a
+N-i-1} f)(\xi).\ee If
 $f \in \D_e(S^{N-1})$, then (\ref{con}) and (\ref{conn}) extend to $Re\, \a \le 0$ by
 analytic continuation.
\end{lemma}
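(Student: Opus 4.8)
The plan is to prove \eqref{con} by a direct Fubini computation, then obtain \eqref{conn} and the analytic continuation as formal consequences. First I would fix $\xi\in\Gr_i(\rN)$, assume $0<Re\,\a<1$ and $f$ continuous (so that all integrals below converge absolutely), and substitute the definition \eqref{af} of $M^\a$ into $(R_iM^\a f)(\xi)=\int_{S^{N-1}\cap\xi}(M^\a f)(u)\,d_\xi u$. Interchanging the order of integration gives
$$
(R_iM^\a f)(\xi)=\gam_N(\a)\int_{S^{N-1}}f(\theta)\,k_\xi(\theta)\,d\theta,\qquad
k_\xi(\theta)=\int_{S^{N-1}\cap\xi}|\theta\cdot u|^{\a-1}\,d_\xi u .
$$
The entire content of the lemma is the evaluation of the kernel $k_\xi(\theta)$.

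To compute $k_\xi(\theta)$: since $u\in\xi$ one has $\theta\cdot u=(\text{\rm Pr}_\xi\theta)\cdot u$; identifying $\xi$ with $\bbr^i$ and $S^{N-1}\cap\xi$ with $S^{i-1}$ carrying its normalized measure, and writing $\text{\rm Pr}_\xi\theta=|\text{\rm Pr}_\xi\theta|\,\om$ with $\om\in S^{i-1}$, rotational invariance on $S^{i-1}$ yields
$$
k_\xi(\theta)=\frac{|\text{\rm Pr}_\xi\theta|^{\a-1}}{\sig_{i-1}}\int_{S^{i-1}}|\om\cdot u|^{\a-1}\,du
=\frac{|\text{\rm Pr}_\xi\theta|^{\a-1}}{\sig_{i-1}}\cdot\frac{2\pi^{(i-1)/2}\,\Gam(\a/2)}{\Gam((\a+i-1)/2)},
$$
the last integral being the classical Beta-integral $\int_{S^{i-1}}|\om\cdot u|^{\a-1}du=\sig_{i-2}B(\a/2,(i-1)/2)$. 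Denote $\lam_i(\a)=2\pi^{(i-1)/2}\Gam(\a/2)/\Gam((\a+i-1)/2)$. Since $|\text{\rm Pr}_\xi\theta|=|\text{\rm Pr}_{(\xi^\perp)^\perp}\theta|$ and $(\a+i-1)+(N-i)-N=\a-1$, comparison with the definition \eqref{rka} of $R_{N-i}^{\a+i-1}$ identifies $\int_{S^{N-1}}|\text{\rm Pr}_\xi\theta|^{\a-1}f(\theta)\,d\theta$ with $\gam_{N,N-i}(\a+i-1)^{-1}(R_{N-i}^{\a+i-1}f)(\xi^\perp)$. Collecting constants, \eqref{con} reduces to the elementary identity $\gam_N(\a)\lam_i(\a)=2\pi^{(i-1)/2}\gam_{N,N-i}(\a+i-1)$, and indeed both sides equal $\sig_{N-1}\Gam((1-\a)/2)\pi^{(i-1)/2}\big/\big(\pi^{(N-1)/2}\Gam((\a+i-1)/2)\big)$.

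Formula \eqref{conn} is then just \eqref{con} with $i$ replaced by $N-i$ and $\xi$ by $\xi^\perp$. To pass to arbitrary $f\in L^1(S^{N-1})$, one observes that after averaging in $\xi$ the function $\xi\mapsto|\text{\rm Pr}_\xi\theta|^{Re\,\a-1}$ is integrable over $\Gr_i(\rN)$ (its singular set has codimension $i\ge2$ and $Re\,\a>0$), so Fubini is legitimate and the identities hold as equalities of $L^1$-functions on the Grassmannian; the restriction $Re\,\a<1$ used above is removed the same way for any $\a$ with $Re\,\a>0$. Finally, if $f\in\D_e(S^{N-1})$, both sides of \eqref{con} and \eqref{conn} are holomorphic in $\a$ on the domain $\a\neq1,3,5,\dots$ — the left sides because $M^\a f$ depends holomorphically on $\a$ in $\D_e(S^{N-1})$ (this is precisely how its continuation past $Re\,\a=0$ is defined, cf. Lemma \ref{l1}) and $R_i$ is continuous, the right sides by the definition of the analytic continuation of $R_{N-i}^{\a+i-1}$ — so since they agree for $Re\,\a>0$ they agree throughout.

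The main obstacle is bookkeeping rather than conceptual: one must keep careful track of the probability-versus-surface-measure normalizations entering $R_i$, $M^\a$ and $R_{N-i}^{\a+i-1}$ so that the composite constant comes out to be exactly $c=2\pi^{(i-1)/2}/\sig_{i-1}$, and one must verify that the exponents excluded on the two sides ($\a=1,3,5,\dots$) genuinely coincide, so that the analytic-continuation step has no hidden gaps. A secondary point requiring care is the Fubini justification for merely integrable $f$, which is why the $L^1$ version is most naturally read as an identity between $L^1$-functions on $\Gr_i(\rN)$.
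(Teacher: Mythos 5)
Your argument is correct and follows essentially the same route as the paper's own (sketched) proof: both substitute the definition of $M^\a$ into $R_i M^\a f$, interchange the order of integration, factor $|\theta\cdot u|=|\text{\rm Pr}_{\xi}\theta|\,|v_\theta\cdot u|$ so that the inner spherical integral is a constant, and identify the remaining integral with $R_{N-i}^{\a+i-1}$ at $\xi^\perp$, with \eqref{conn} obtained by the substitution $i\mapsto N-i$, $\xi\mapsto\xi^\perp$. You merely fill in what the paper leaves implicit — the Beta-integral evaluation of the inner integral, the verification that the constants collapse to $c=2\pi^{(i-1)/2}/\sig_{i-1}$, the Fubini justification for $f\in L^1$, and the holomorphy argument for the continuation — and all of these check out.
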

\begin{proof} We sketch the proof for the sake of completeness. For $Re\, \a >0$,
\[(R_i M^\a f)(\xi)=\gam_N(\a) \int_{S^{N-1}\cap\xi} \, d_{\xi}
u  \int_{S^{N-1}} f(\theta) |\theta\cdot u|^{\a-1} \,d\theta.\]
Since $|\theta\cdot u|=|\text{\rm Pr}_{\xi} \theta||v_\theta \cdot
u|$ for some $v_\theta \in S^{N-1}\cap\xi$, changing the order of
integration, we obtain \[ (R_i M^\a f)(\xi)=\gam_N(\a)\,
\int_{S^{N-1}} f(\theta) |\text{\rm Pr}_{\xi} \theta|^{\a-1}
\,d\theta  \int_{S^{N-1}\cap\xi} |v_\theta \cdot u|^{\a-1} d_{\xi}
u.\] The inner integral is independent of $v_\theta$ and can be
easily evaluated.
 This gives (\ref{con}). Equality  (\ref{conn}) is a reformulation of (\ref{con}).
 \end{proof}

 An origin-symmetric star body $K$ in $\rN$ is completely determined by its {\it radial
function} $ \rho_K (\theta) = \sup \{ \lambda \ge 0: \, \lambda
\theta \in K \}$; see Notation.  Passing to polar coordinates, we
get \be\label{rraa}\vol_i(K\cap \xi) =\frac{\sig_{i-1}}{i}\,(R_i
\rho_K^i )(\xi), \qquad \xi \in \Gr_{i}(\rN).\ee

The next statement follows from Lemma \ref{L22} and plays the key
role in the whole paper.
\begin{lemma}\label{mal} Let $\rho_K \in D_e^G (S^{N-1})$, $N=dn$; $d\in \{1,2,4,8\}$,
$n>1$. Then for every subspace $H_\theta\in \tilde
\Gr_{N-d}(\bbr^N)$ with $\theta \in S^{N-1}$, \be\label{sub}
vol_{N-d} (K \!\cap \!H_\theta )\!=\!\frac {\pi^{N/2 -d}\,
\sig_{d-1}}{N-d}\, (M^{1-d}\rho_K^{N-d})(\theta).\ee
\end{lemma}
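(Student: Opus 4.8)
The plan is to reduce the $(N-d)$-dimensional section volume to a spherical Radon transform, exploit the $G$-invariance via Lemma \ref{L22} to rewrite the relevant Radon transform over the lower-dimensional Grassmannian $\tilde\Gr_{N-d}(\bbr^N)$ as a Minkowski--Funk-type transform on $S^{N-1}$, and finally invoke the analytic-continuation identities from Lemma \ref{l2} to collapse everything into the single operator $M^{1-d}$. Concretely, start from (\ref{rraa}) with $i=N-d$ and $\xi=H_\theta$:
$$
vol_{N-d}(K\cap H_\theta)=\frac{\sig_{N-d-1}}{N-d}\,(R_{N-d}\rho_K^{N-d})(H_\theta).
$$
So it suffices to show $(R_{N-d}\rho_K^{N-d})(H_\theta)=c\,(M^{1-d}\rho_K^{N-d})(\theta)$ for the appropriate constant $c$, and then match constants at the very end.

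\medskip

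The key step is the following observation. Since $\rho_K\in D_e^G(S^{N-1})$, so is $f:=\rho_K^{N-d}$ (evenness and $G$-invariance are preserved under taking powers). By Lemma \ref{L22}, $f$ is constant on each $(d-1)$-dimensional circle $S^{N-1}\cap H_\theta^\perp$. Now I would write the $(N-d-1)$-dimensional average of $f$ over $S^{N-1}\cap H_\theta$ and relate it to the full Minkowski--Funk transform. The cleanest route is to apply Lemma \ref{l2} in the form (\ref{conn}) with $i=d$: for $f\in\D_e(S^{N-1})$,
$$
(R_{N-d}M^\a f)(\xi^\perp)=\frac{2\pi^{(N-d-1)/2}}{\sig_{N-d-1}}\,(R_d^{\a+N-d-1}f)(\xi),\qquad \xi\in\Gr_d(\bbr^N).
$$
Take $\xi=F_d(\theta)$, the $d$-plane spanned by $\theta,\A_1\theta,\dots,\A_{d-1}\theta$, so that $\xi^\perp=H_\theta$. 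Choosing the parameter so that $\a+N-d-1=0$, i.e. $\a=d+1-N$, the right-hand side becomes (via (\ref{lim})) a constant times $(R_d f)(F_d(\theta))$, which by Lemma \ref{L22} equals $f(\theta)$ since $f$ is constant on $S^{N-1}\cap\xi=S^{N-1}\cap H_\theta^\perp$. Thus
$$
(R_{N-d}M^{d+1-N}f)(H_\theta)=\const\cdot f(\theta).
$$
Applying this with $f$ replaced by $M^{2-N-(d+1-N)}g=M^{1-d}g$ — legitimate since $M^{\a}$ is an automorphism of $\D_e(S^{N-1})$ for the exponents in play, by Lemma \ref{l1}, and since for $g=\rho_K^{N-d}$ we have $M^{1-d}g\in\D_e(S^{N-1})$ — and using the composition rule $M^{d+1-N}M^{1-d}=M^{2-N}$ together with (\ref{st}) would let me invert the relation and obtain $(R_{N-d}g)(H_\theta)=\const\cdot(M^{1-d}g)(\theta)$, which is what is needed.

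\medskip

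\textbf{The main obstacle} I anticipate is bookkeeping of constants and the careful justification of the analytic-continuation step: the exponent $\a=d+1-N$ is generically $\le 0$ (for $n>1$, $N=dn\ge 2d>d+1$ when $d\ge 2$, and $N=n\ge 2$ when $d=1$ gives $\a=0$), so one is using the extension of (\ref{con})--(\ref{conn}) to $Re\,\a\le 0$ guaranteed by Lemma \ref{l2}, and one must check that the bad-exponent conditions ($\a\neq 1,3,5,\dots$ and $2-N-\a\neq 1,3,5,\dots$) are met — here $\a=d+1-N$ and $2-N-\a=1-d$, which are $\le 1$, hence never in $\{1,3,5,\dots\}$ except the borderline $d=1$ case, which must be handled as the classical even Minkowski--Funk situation where $M^{1-d}=M^0=\id$ and the formula reduces to (\ref{rraa}) with $\xi=H_\theta=\theta^\perp$. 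Collecting the constants: from (\ref{rraa}) one picks up $\sig_{N-d-1}/(N-d)$; from (\ref{conn}) one picks up $2\pi^{(N-d-1)/2}/\sig_{N-d-1}$; from (\ref{lim}) applied at $i=d$ one picks up $c_d=\sig_{d-1}/(2\pi^{(d-1)/2})$; multiplying these and simplifying $\sig_{N-d-1}$ cancels and $2\pi^{(N-d-1)/2}\cdot\pi^{-(d-1)/2}=2\pi^{(N-2d)/2}$, so the product is $\frac{\pi^{N/2-d}\,\sig_{d-1}}{N-d}$, matching (\ref{sub}). Finally I would note that the whole derivation only uses $\rho_K\in D_e^G(S^{N-1})$ (smoothness is what makes the analytic continuations and the automorphism property of Lemma \ref{l1} applicable), so no convexity of $K$ is needed for this identity.
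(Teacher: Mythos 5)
Your argument is correct and rests on exactly the same ingredients as the paper's proof --- formula (\ref{rraa}), the intertwining relation of Lemma \ref{l2}, the limit (\ref{lim}), and Lemma \ref{L22} --- and it produces the right constant. The only structural difference is the direction in which the intertwining relation is run: the paper applies (\ref{conn}) with $i=N-d$, $\a=1-d$ directly to $\rho_K^{N-d}$, converting $R_{N-d}^{0}\rho_K^{N-d}$ into $R_d M^{1-d}\rho_K^{N-d}$ and then invoking Lemma \ref{L22}, whereas you apply (\ref{conn}) with $i=d$, $\a=d+1-N$ to $M^{1-d}\rho_K^{N-d}$ and then undo $M^{1-d}$ via (\ref{st}); this costs you the extra inversion/automorphism step from Lemma \ref{l1} that the paper's route avoids. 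Two parenthetical claims in your write-up are wrong, though neither damages the argument: there is no composition law $M^{d+1-N}M^{1-d}=M^{2-N}$ --- what you actually need, and what (\ref{st}) gives since $(d+1-N)+(1-d)=2-N$, is $M^{d+1-N}M^{1-d}=\mathrm{id}$; and $M^{0}$ is not the identity but $c_{N-1}M$ by (\ref{lim}), so your proposed fallback for $d=1$ is false --- fortunately no fallback is needed, since for $n>1$ the exponents $d+1-N=2-n$ and $1-d=0$ are never in $\{1,3,5,\dots\}$ and the general argument covers $d=1$. Finally, when you substitute $f=M^{1-d}\rho_K^{N-d}$ into your key identity you are tacitly using that this function is $G$-invariant, which should be justified (as the paper does) by the fact that $M^{1-d}$ commutes with orthogonal transformations.
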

\begin{proof} Applying successively (\ref{rraa}) (with $k=N-d$), (\ref{lim}),
and (\ref{conn}) (with $\a=i+1-N, \;i=N-d$), we obtain \bea
vol_{N-d} (K \cap H_\theta )&=&\frac{\sig_{N-d-1}}{N-d} (R_{N-d}
\rho_K^{N-d})(H_\theta)\nonumber\\&=&\frac
{2\pi^{(N-d-1)/2}}{N-d}\,(R_{N-d}^0
\rho_K^{N-d})(H_\theta)\nonumber\\&=&\frac{\pi^{N/2-d}\,
\sig_{d-1}}{N-d} (R_{d}M^{1-d}
\rho_K^{N-d})(H_\theta^\perp).\nonumber\eea Since  $\rho_K$ is
$G$-invariant and $M^{1-d}$ commutes with orthogonal
transformations, then, by Lemma
 \ref {L22},  $M^{1-d} \rho_K^{N-d}\equiv \const$ on $S^{N-1}\cap
H_\theta^\perp$ and (\ref{sub}) follows.
\end{proof}

\begin{remark} In the classical  case $\bbk=\bbr$, when $N=n$ and
$d=1$, (\ref{sub}) becomes a particular case of  (\ref{rraa}):
$$  vol_{n-1} (K \cap\theta^\perp )=\frac{\sig_{n-2}}{n-1} (M
\rho_K^{n-1})(\theta),
$$
where $M$ is the Minkowski-Funk transform (\ref{mf}).
\end{remark}

\subsection{Homogeneous distributions and Riesz fractional
derivatives} Given  a  $G$-invariant infinitely smooth body  $K$ in
$\rN$ and a plane $ H_\theta \!\in\! \tilde \Gr_{N-d}(\bbr^N)$
generated by $\theta \in S^{N-1}$, we denote \be \label {secf} S_K
(\theta)=vol_{N-d} (K \cap H_\theta ). \ee

\noindent {\bf Question:} For which operator $A^\a$, \be A^\a
M^{1-d}\rho_K^{N-d}=(M^{1-\a}\rho_K^{N-d})(\theta) \,? \ee

\noindent  By (\ref{sub}), an answer to this question would give us
the corresponding equality for the section function \be A^\a
S_K(\theta) =c\, (M^{1-\a}\rho_K^{N-d})(\theta), \qquad c=\frac
{\pi^{N/2 -d}\, \sig_{d-1}}{N-d},\ee that paves the way to Problem
{\bf C}. By Lemma \ref{l1} we immediately get \be\label {dops} A^\a=
M^{1-\a}M^{1+d-N}.\ee To make this explicit formula  more
transparent and convenient to handle, we extend our functions by
homogeneity to the entire space $\rN$ and invoke powers of the
Laplacian. This idea was formally used in \cite {KYY, KKZ}, but it
requires justification and some  correction. Below we explain the
essence of the matter.

Let $\S(\bbr^N)$  be the  Schwartz
 space of  rapidly decreasing $C^\infty$  functions, and
 $\S'(\bbr^N)$   its dual.  The Fourier transform of  a distribution $F$
 in $\S'(\bbr^N)$ is defined by \footnote {\rm Here and on, the notation
$\lng \cdot, \cdot \rng$ and $(\cdot,\cdot)$ is used for
distributions on $\bbr^N$ and $S^{N-1}$, respectively.}
$$ \lng\hat F, \hat \phi\rng= (2\pi)^{N}\lng F, \phi\rng, \quad \hat \phi(y)=
\int_{\bbr^{N}} \phi(x) \, e^{ix \cdot y} \, dx, \quad \phi \in
\S(\bbr^N).$$  For $f \in L^1(S^{N-1})$, let $$(E_\lam
f)(x)=|x|^\lam f (x/|x|), \qquad x \in \bbr^N \setminus \{0\}.$$
This operator generates a meromorphic $\S'$-distribution, which is
defined by analytic continuation ($a.c.$) as follows:
$$ \lng E_\lam f, \phi \rng \!
=a.c. \int_0^\infty\! r^{\lam +N-1} u(r)\,dr, \quad
u(r)\!=\!\int_{S^{N-1}}
f(\theta)\,\overline{\phi(r\theta)}\,d\theta.$$
 The distribution $E_\lam f$ is regular if $Re
\,\lam >-N$ and admits simple poles at $\lam=-N, -N-1, \ldots $; see
\cite {GS}.
   If $f$ is orthogonal to all spherical
harmonics of degree $j$, then the derivative $u^{(j)}(r)$ equals
zero at $r=0$ and the pole at $\lam=-N-j$ is removable. In
particular, if  $f$ is  even, i.e., $(f, \vp)\!=\!(f, \vp_-), \;
\vp_- (\theta)\!=\!\vp (-\theta) \quad \forall \vp \!\in \!\D
(S^{N-1})$, then the only possible poles  of $E_\lam f$ are $-N,
-N-2, -N-4, \dots $.

Operator family $\{M^\a \}$ (see (\ref{af})) naturally arises thanks
to the formula \be \label{cf} [E_{1-N-\a} f]^\wedge=2^{1-\a}
\pi^{N/2}\,E_{\a-1}M^\a f,\qquad f\in \D_e(S^{N-1}),\ee which
  amounts to Semyanistyi \cite{Se2}. It holds pointwise for $0\!<\!Re \,
\a \!<\!1$ (see, e.g., Lemma 3.3 in \cite{R2} ) and extends in the
$S'$-sense to all $\a\in \bbc$ satisfying \be\label{alf}\a \notin
\{1,3, 5, \ldots \}
 \cup \{1-N, -N-1, -N-3, \ldots \}.\ee

The Riesz fractional derivative $D^\a \psi$ of order $\a \in \bbc$
of a Schwartz function $\psi$ is defined as a
$\S'(\bbr^N)$-distribution by the rule \be \label{lou}(2\pi)^{N}\lng
D^\a \psi, \phi\rng=\lng |y|^\a \hat \psi, \hat\phi\rng,\qquad \phi
\in \S(\bbr^N),\ee where the right hand side is a meromorphic
function of $\a$ with  simple poles $\a=-N, -N-2, \ldots$. One can
formally regard $D^\a$ as a power of minus Laplacian, i.e., $D^\a
=(-\Del)^{\a/2}$. The case of negative $Re \, \a$ corresponds to
Riesz potentials \cite{St}. Since multiplication by $|y|^\a$ does
not preserve the space $\S(\bbr^N)$, definition (\ref{lou}) is not
extendable  to arbitrary $\S'(\bbr^N)$-distributions.

To overcome this difficulty, Semyanistyi \cite{Se1} came up with the
brilliant idea to introduce
 another class of distributions as follows. Let $\Psi=\Psi (\bbr^N)$ be the subspace of
$\S(\bbr^N)$, consisting of functions $\om$ such that
$(\partial^\gam \om)(0)=0$ for all multi-indices  $\gam$.
 We denote by $\Phi=\Phi (\bbr^N)$  the Fourier image of $\Psi$, which
 is formed by Schwartz functions orthogonal to all polynomials. Let
$\Phi'$ and $\Psi'$ be the duals of $\Phi$ and $\Psi$, respectively.
 Two $\S'$-distributions, that coincide in the $\Phi'$-sense,
 differ from each other by a polynomial.
For any $\Phi'$-distribution $g$ and any $\a \in \bbc$, the Riesz
fractional derivative $D^\a g$ is correctly defined by the formula
\be \label {slizo}\lng D^\a g, \om\rng= (2\pi)^{-N}\lng \hat g,
|y|^\a \hat\om\rng,\qquad \om \in \Phi.\ee Clearly, multiplication
by $|y|^\a $ is a linear continuous operator on $\Psi$ (but not on
$\S$!); see \cite{R1, SKM} for  details and generalizations.
\begin{lemma}\label{mald} Let  $\a  \notin \{0, -2, -4, \ldots \}
 \cup \{N, N+2, N+4, \ldots \}$. If $f \in D_e (S^{N-1})$, then
 \be\label{tyi}
E_{-\a}M^{1-\a}f=2^{d-\a} D^{\a -d} E_{-d} M^{1-d}f \ee in the
$\Phi'$-sense. If, moreover,  $\a-d=2m$, $m=0,1,2, \ldots$, and
\be\label{dem} (D_m f) (\theta)=2^{-2m} [(-\Del)^m E_{-d}
f](x)|_{x=\theta},\ee then \be \label{orla} (M^{1-\a}f)(\theta)
=(D_m M^{1-d}f)(\theta)\ee pointwise for every $\theta \in S^{N-1}$.
\end{lemma}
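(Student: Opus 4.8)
The plan is to derive \eqref{tyi} from the Semyanistyi-type intertwining formula \eqref{cf} by a bookkeeping argument with the exponents, and then to convert the $\Phi'$-identity into a pointwise identity on $S^{N-1}$ under the extra hypothesis $\alpha-d=2m$. First I would record the two instances of \eqref{cf} that are relevant: applied with parameter $1-d$ (legitimate once one checks that $1-d$ avoids the exceptional set \eqref{alf}, which holds for $d\in\{1,2,4,8\}$ and $N=dn$, $n>1$) it reads $[E_{-N+d}M^{1-d}f]^\wedge = 2^{d}\pi^{N/2}E_{-d}M^{1-d}f$; applied with parameter $1-\alpha$ it reads $[E_{-N+\alpha}M^{1-\alpha}f]^\wedge = 2^{\alpha}\pi^{N/2}E_{-\alpha}M^{1-\alpha}f$, valid precisely when $\alpha\notin\{1,3,5,\dots\}\cup\{1-N,-N-1,-N-3,\dots\}$; the hypothesis on $\alpha$ in the lemma is tailored so that both these statements make sense in the $\Phi'$-sense. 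I would then apply $D^{\alpha-d}$, defined by \eqref{slizo}, to $E_{-d}M^{1-d}f$: on the Fourier side this multiplies $[E_{-d}M^{1-d}f]^\wedge$ by $|y|^{\alpha-d}$, and since $E_{-d}M^{1-d}f$ is (up to the constant $2^{-d}\pi^{-N/2}$) the Fourier transform of the homogeneous distribution $E_{-N+d}M^{1-d}f$, Fourier-inverting and using that multiplication by $|y|^{\alpha-d}$ corresponds to the homogeneity shift $E_{-N+d}\mapsto E_{-N+\alpha}$ (this is the content of how the Riesz derivative acts on homogeneous distributions; cf. the pole structure discussion after the definition of $E_\lambda f$) turns $E_{-N+d}M^{1-d}f$ into a constant multiple of $E_{-N+\alpha}M^{1-\alpha}f$. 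Matching the scalar constants via the two displayed instances of \eqref{cf} gives exactly the factor $2^{d-\alpha}$ in \eqref{tyi}.

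For the second assertion I would specialize to $\alpha-d=2m$ with $m$ a nonnegative integer. Here $D^{\alpha-d}=D^{2m}=(-\Delta)^m$ is an honest differential operator, so \eqref{slizo} is just integration by parts and there is no ambiguity modulo polynomials: $(-\Delta)^m$ applied to the genuine function $E_{-d}M^{1-d}f$ (which is $C^\infty$ away from the origin and homogeneous of degree $-d$) produces a genuine function, homogeneous of degree $-d-2m=-\alpha$. Thus \eqref{tyi} upgrades from a $\Phi'$-identity to an identity of locally integrable functions on $\bbr^N\setminus\{0\}$: $E_{-\alpha}M^{1-\alpha}f = 2^{d-\alpha}(-\Delta)^m E_{-d}M^{1-d}f$ away from $0$. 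Restricting both sides to $S^{N-1}$ and using that $E_{-\alpha}g$ restricted to the sphere is just $g$, the left side becomes $(M^{1-\alpha}f)(\theta)$; the right side, after inserting the normalization $2^{-2m}$, is by definition \eqref{dem} exactly $(D_m M^{1-d}f)(\theta)$ — note $2^{d-\alpha}=2^{-2m}$, so the constants are consistent. This yields \eqref{orla}.

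The main obstacle I anticipate is justifying the two upgrades in rigor: first, that the intertwining \eqref{cf} may legitimately be used at the (possibly non-positive, possibly exceptional-looking) parameters $1-d$ and $1-\alpha$ rather than only in the strip $0<\operatorname{Re}\alpha<1$ where it holds pointwise — this requires checking, using $N=dn$ with $n>1$ and $d\in\{1,2,4,8\}$, that $1-d$ and $1-\alpha$ actually lie outside \eqref{alf} under the stated hypothesis on $\alpha$, and invoking the stated $S'$-extension; second, and more delicate, passing from the $\Phi'$-identity \eqref{tyi} to the pointwise statement \eqref{orla}. The point is that two $\S'$-distributions agreeing in the $\Phi'$-sense differ by a polynomial, so a priori $E_{-\alpha}M^{1-\alpha}f$ and $2^{d-\alpha}(-\Delta)^m E_{-d}M^{1-d}f$ could differ by a polynomial; one must observe that \emph{both} sides are homogeneous of the same negative degree $-\alpha$ (for the right side: $(-\Delta)^m$ lowers the homogeneity degree of $E_{-d}M^{1-d}f$ by $2m$, giving $-d-2m=-\alpha<0$), and the only polynomial homogeneous of negative degree is $0$. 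Hence the polynomial ambiguity vanishes and the identity holds genuinely, in particular pointwise on $S^{N-1}$. I would also be careful that the evaluation $[(-\Delta)^m E_{-d}f](x)|_{x=\theta}$ in \eqref{dem} is unambiguous precisely because $(-\Delta)^m E_{-d}f$ is continuous on $\bbr^N\setminus\{0\}$ when $f\in D_e(S^{N-1})$, which is where the smoothness hypothesis on $f$ is used.
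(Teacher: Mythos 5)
Your overall strategy --- combining the two instances of (\ref{cf}) with multiplication by $|y|^{\a-d}$ on the Fourier side --- is exactly the paper's, but the pivotal step is misstated in a way that breaks the argument. Formula (\ref{cf}) with $\a$ replaced by $1-d$ reads $[E_{d-N}f]^\wedge = 2^{d}\pi^{N/2}\,E_{-d}M^{1-d}f$: the object being Fourier-transformed is $E_{d-N}f$, \emph{not} $E_{-N+d}M^{1-d}f$ as you wrote (and likewise for the parameter $1-\a$). This is not a harmless typo, because your key claim --- that multiplication by $|y|^{\a-d}$ ``turns $E_{-N+d}M^{1-d}f$ into a constant multiple of $E_{-N+\a}M^{1-\a}f$'' --- is false: the homogeneity shift $E_{-N+d}\,g\mapsto E_{-N+\a}\,g$ changes only the radial power and leaves the spherical factor $g$ untouched, so it would produce $E_{-N+\a}M^{1-d}f$, and $M^{1-d}f$ is not a constant multiple of $M^{1-\a}f$ in general. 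The replacement of $M^{1-d}$ by $M^{1-\a}$ cannot come from the homogeneity shift; it is produced by the Fourier transform itself through (\ref{cf}). The correct chain is: the trivial radial identity $E_{\a-N}f=|y|^{\a-d}E_{d-N}f$ (the \emph{same} $f$ on both sides, legitimate in the $\Psi'$-pairing since $\Psi$-functions vanish to infinite order at the origin) gives, for $\om\in\Phi$ and $c_\a=2^{-\a}\pi^{-N/2}$,
$\lng E_{-\a}M^{1-\a}f,\om\rng = c_\a\lng E_{\a-N}f,\hat\om\rng = c_\a\lng E_{d-N}f,|y|^{\a-d}\hat\om\rng = c_\a\lng [E_{d-N}f]^\wedge,D^{\a-d}\om\rng = 2^{d-\a}\lng D^{\a-d}E_{-d}M^{1-d}f,\om\rng$,
which is (\ref{tyi}).

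Once this is repaired, the rest of your proposal is sound. Your check that $1-d$ and $1-\a$ avoid the exceptional set (\ref{alf}) under the hypotheses on $\a$, $d$, and $N=dn$, $n>1$, is precisely the justification needed to use (\ref{cf}) at these parameters. Your treatment of the case $\a-d=2m$ is also correct and is a slightly different (and arguably cleaner) route than the paper's: you observe that both sides of (\ref{tyi}) are homogeneous of the same negative degree $-\a$, so the polynomial ambiguity inherent in a $\Phi'$-identity must vanish, whereas the paper instead notes that the duality computation extends to all $C^\infty$ test functions supported near the unit sphere. Either way, restriction to $S^{N-1}$ together with $2^{d-\a}=2^{-2m}$ and the definition (\ref{dem}) yields (\ref{orla}).
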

\begin{proof} Replace $\a$ by $1-\a$ and by $1-d$ in (\ref{cf}). Denoting $c_\a=2^{-\a}
\pi^{-N/2}$ and $c_d=2^{-d} \pi^{-N/2}$, we get
$$
E_{-\a}M^{1-\a}f=c_\a \, [E_{\a-N}f]^\wedge, \qquad
E_{-d}M^{1-d}f=c_d \,[E_{d-N}f]^\wedge$$ (in the $\S'$-sense). Using
these formulas, for any test function $\om \in \Phi$ we obtain \bea
\lng E_{-\a}M^{1-\a}f, \om\rng&=& c_\a \lng [E_{\a-N}f]^\wedge, \om
\rng=c_\a\lng E_{\a-N}f, \hat\om \rng\nonumber\\&=& c_\a\lng
E_{d-N}f, |y|^{\a-d}\hat\om \rng=c_\a\lng E_{d-N}f,
[D^{\a-d}\om]^\wedge \rng\nonumber\\&=& c_\a\lng [E_{d-N}f]^\wedge,
D^{\a-d}\om \rng=c_\a c_d^{-1} \lng E_{-d}M^{1-d}f, D^{\a-d}\om
\rng\nonumber\\&=&2^{d-\a} \lng D^{\a-d}E_{-d}M^{1-d}f, \om
\rng.\nonumber\eea Let now $\a-d=2m$. Then $D^{\a-d}=(-\Del)^m $,
and the same reasoning is applicable for any $C^\infty$-function
supported in the neighborhood of the unit sphere. Hence, (\ref{tyi})
holds
 pointwise in this specific case, and (\ref{orla}) follows.
\end{proof}

Equalities  (\ref{sub}) and  (\ref{orla}) imply the following
\begin{corollary} \label{opya}
Let $S_K (\theta)$, $\theta \in S^{N-1}$, be a section function
(\ref{secf}) of a  $G$-invariant infinitely smooth body $K$ in
$\rN$; $N=dn$, $n>1$, $d\in \{1,2,4,8\}$. Let $D_m$ be a
differential operator (\ref{dem}), where $$2m \neq N-d, \,N-d+2, \,
N-d+4, \ldots \, .$$ Then \be (D_m S_K)(\theta) =c\,
(M^{1-d-2m}\rho_K^{N-d})(\theta), \qquad c=\frac {\pi^{N/2 -d}\,
\sig_{d-1}}{N-d}.\ee
  \end{corollary}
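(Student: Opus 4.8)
The plan is to simply combine the two key equalities that have already been established, namely Lemma~\ref{mal} and the last assertion of Lemma~\ref{mald}. Recall that Lemma~\ref{mal} gives, for a $G$-invariant smooth body $K$ in $\rN$ with $\rho_K\in D_e^G(S^{N-1})$ and any $H_\theta\in\tilde\Gr_{N-d}(\bbr^N)$,
\be
S_K(\theta)=vol_{N-d}(K\cap H_\theta)=\frac{\pi^{N/2-d}\,\sig_{d-1}}{N-d}\,(M^{1-d}\rho_K^{N-d})(\theta).
\ee
Applying the differential operator $D_m$ of (\ref{dem}) to both sides, we obtain $(D_m S_K)(\theta)=c\,(D_m M^{1-d}\rho_K^{N-d})(\theta)$ with $c$ as in the statement.

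Next I would invoke (\ref{orla}) from Lemma~\ref{mald} with $f=\rho_K^{N-d}$ and $\a=d+2m$, which is legitimate because $\rho_K^{N-d}\in D_e(S^{N-1})$ (the body is smooth and origin-symmetric, so $\rho_K$, hence $\rho_K^{N-d}$, lies in $\D_e(S^{N-1})$), and because the hypothesis $2m\ne N-d,\,N-d+2,\dots$ translates precisely into $\a=d+2m\notin\{N,N+2,N+4,\dots\}$; together with $\a-d=2m\ge 0$ this is exactly the admissibility range required by Lemma~\ref{mald} (the condition $\a\notin\{0,-2,-4,\dots\}$ being automatic). Thus (\ref{orla}) yields, pointwise for every $\theta\in S^{N-1}$,
\be
(M^{1-d-2m}\rho_K^{N-d})(\theta)=(D_m M^{1-d}\rho_K^{N-d})(\theta).
\ee
Substituting this into the displayed consequence of Lemma~\ref{mal} gives $(D_m S_K)(\theta)=c\,(M^{1-d-2m}\rho_K^{N-d})(\theta)$, which is the desired conclusion.

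The only point demanding genuine care—and hence the main (if modest) obstacle—is the bookkeeping of the exclusion sets: one must check that the parameter $\a=d+2m$ arising here really does fall into the range where Lemma~\ref{mald}'s pointwise statement (\ref{orla}) is valid, and simultaneously that the Minkowski--Funk exponent $1-d-2m$ avoids the forbidden odd-integer set $\{1,3,5,\dots\}$ implicit in the definition of $M^\b$; since $1-d-2m\le 1-d\le 0$ for $d\in\{1,2,4,8\}$ and $m\ge 0$, this last requirement is automatic. I would also note that applying a differential operator to the identity of Lemma~\ref{mal}, and equating it with (\ref{orla}), is harmless precisely because both sides of Lemma~\ref{mal} are $C^\infty$ functions on $S^{N-1}$ extended homogeneously near the sphere, so the operator $D_m$ of (\ref{dem})—defined via $(-\Del)^m$ acting on the degree-$(-d)$ homogeneous extension—acts consistently on each side. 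No further analytic subtlety enters; the corollary is a direct concatenation of the two lemmas.
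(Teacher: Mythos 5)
Your proposal is correct and takes exactly the route the paper intends: the paper's own ``proof'' is the one-line remark preceding the corollary (``Equalities (\ref{sub}) and (\ref{orla}) imply the following''), and you simply carry out that combination explicitly, with the parameter bookkeeping ($\a=d+2m$, the automatic exclusion $\a>0$, and the translation of $\a\notin\{N,N+2,\dots\}$ into $2m\neq N-d,N-d+2,\dots$) verified correctly.
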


\subsection{Intersection bodies} We recall that $\K^N$ denotes
 the set of all origin-symmetric star  bodies  in $\bbr^N$. According
 to Lutwak \cite{Lu}, a body $K\in \K^N $ is called an intersection
body of a body $L\in \K^N $ if $ \rho_K (\theta) =\vol_{N-1}(L\cap
\theta^\perp)$ for every $\theta \in S^{N-1}$. A wider class of
intersection bodies, which is  the closure of the Lutwak's class in
the radial metric, was introduced by Goodey, Lutwak, and  Weil
\cite{GLW} as a collection of bodies $K\in \K^N $ with the property
$ \rho_K =M \mu$, where $M$ is the Minkowski-Funk transform
(\ref{mf}) and $\mu$ is an even nonnegative finite Borel measure on
 $S^{N-1}$.  The class of all such measures will be denoted by $\M_{e+}(S^{N-1})$.

 There exist several generalizations of the concept of  intersection
body \cite {K, Mi, R7, RZ, Z1}. One of  them relies on the fact that
the Minkowski-Funk transform $M$ is a member of the analytic family
$M^\a$ of the cosine transforms.

\begin{definition} \label{dk}\cite[Definition 5.1]{R7} For $0<\lam <N$,
 a body $K\in \K^N $ is called a $\lam$-intersection body
if there is a measure $\mu \in \M_{e+}(S^{N-1})$ such that
$\rho_K^\lam=M^{1-\lam}\mu $ $($by Lemma \ref{l1}, this is
equivalent to $M^{1+\lam -N}\rho_K^\lam\in \M_{e+}(S^{N-1}))$. We
denote by $\I^N_\lam$ the set of all such bodies.
\end{definition}

The equality $\rho_K^\lam=M^{1-\lam}\mu $ means that for any $\vp
\in \D(S^{N-1})$,
 $$
\int_{S^{N-1}} \rho_K^{k}(\theta)\vp (\theta)\, d\theta= \int_{S^{N-1}}
 (M^{1-\lam}\vp)(\theta)\, d\mu (\theta),$$
where for $\lam  \ge 1$, $(M^{1-\lam}\vp)(\theta)$ is understood in
the sense of analytic continuation.\footnote{There is a typo in
\cite {R7}: In Definition 5.1 and in the subsequent equality on p.
712 one should  replace $\rho_K$  by $\rho_K^\lam$.} If $\lam=k $ is
an integer, the class $\I^N_\lam$ coincides with Koldobsky's class
of $k$-intersection bodies  and agrees with his concept of isometric
embedding of the  space $(\bbr^N,||\cdot||_K)$ into
  $L_{-p}, \; p=\lam$ \cite{K}. In the framework of this concept,
  all bodies $K \in
  \I^N_\lam$ can be regarded as ``unit balls of $N$-dimensional
  subspaces of $L_{-\lam}$''.

The following  statement is a consequence of the trace theorem  for
 cosine transforms; see \cite[Theorem 5.13]{R7}.
\begin{theorem}\label{tyif} Let $1<m<N$, $\eta\in \Gr_m (\bbr^N)$, and let $0<\lam<m$.  If  $K\in
\I^N_\lam$ in $\bbr^N$, then $K\cap \eta \in \I^m_\lam$ in $\eta$.
\end{theorem}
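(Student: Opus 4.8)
The plan is to identify $\rho_{K\cap\eta}^\lam$ with the restriction to the subsphere $S^{N-1}\cap\eta$ of the cosine transform $M^{1-\lam}\mu$ supplied by the hypothesis $K\in\I^N_\lam$, and then to invoke the trace theorem for cosine transforms \cite[Theorem 5.13]{R7} quoted before the statement.

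First I would record the elementary fact that, $\eta$ being a linear subspace through the origin, $\rho_{K\cap\eta}(\theta)=\rho_K(\theta)$ for every $\theta\in S^{m-1}:=S^{N-1}\cap\eta$ (since $\lam\theta\in\eta$ automatically, $\lam\theta\in K\cap\eta\iff\lam\theta\in K$). Hence $\rho_{K\cap\eta}$ is a positive continuous function on $S^{m-1}$, so $K\cap\eta\in\K^m$, and $\rho_{K\cap\eta}^\lam$ is the pointwise restriction of $\rho_K^\lam$ to $S^{m-1}$. By $K\in\I^N_\lam$ and Definition \ref{dk} there is $\mu\in\M_{e+}(S^{N-1})$ with $\rho_K^\lam=M^{1-\lam}\mu$; using the self-adjointness of $M^{1-\lam}$ (Lemma \ref{l1}) this means the $\S'$-distribution $M^{1-\lam}\mu$ is represented by the continuous function $\rho_K^\lam$, so that its restriction to $S^{m-1}$ is unambiguous.

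The heart of the argument is the trace theorem \cite[Theorem 5.13]{R7}: for $0<\lam<m<N$ the restriction of $M^{1-\lam}\mu$ to $S^{m-1}=S^{N-1}\cap\eta$ equals $c\,M_\eta^{1-\lam}\nu$, where $M_\eta^{1-\lam}$ is the cosine transform of the \emph{same} order $1-\lam$ taken on $S^{m-1}$, the constant $c=c(N,m,\lam)$ is positive, and $\nu$ is the ``$\eta$-projection'' of $\mu$: in bispherical coordinates $\theta=\cos\om\,v+\sin\om\,w$ adapted to $\bbr^N=\eta\oplus\eta^\perp$ (with $v\in S^{m-1}$, $w\in S^{N-1}\cap\eta^\perp$, $\om\in[0,\pi/2]$) it is the pushforward of $(\cos\om)^{m-1-\lam}(\sin\om)^{N-m-1}\mu$ along $\theta\mapsto v$. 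I would then verify $\nu\in\M_{e+}(S^{m-1})$: evenness and nonnegativity are inherited from $\mu$ because the weight is nonnegative and antipodally invariant, and finiteness follows from compactness of $S^{N-1}$ once the weight is integrable, which is exactly where $\lam<m$ (integrability at $\om=0$) and $m<N$ (integrability at $\om=\pi/2$) are used; since the kernel exponent $|v\cdot u|^{-\lam}$ is unchanged, the parameter $\lam$ is preserved. Combining the two steps yields $\rho_{K\cap\eta}^\lam=(M^{1-\lam}\mu)|_{S^{m-1}}=M_\eta^{1-\lam}(c\,\nu)$ with $c\,\nu\in\M_{e+}(S^{m-1})$ and $0<\lam<m$, which by Definition \ref{dk} is precisely $K\cap\eta\in\I^m_\lam$ in $\eta$.

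The main obstacle is the trace theorem itself, which is why it is cited rather than reproved: one must know that the a priori distributional object $M^{1-\lam}\mu$ genuinely restricts to the (possibly very special) subsphere $S^{N-1}\cap\eta$, and that the projected measure $\nu$ remains nonnegative and finite. For smooth densities this reduces to the change of variables in bispherical coordinates sketched above; the passage to general $\mu\in\M_{e+}$ is by duality and approximation. A small point worth making explicit is the positivity of $c$: the $\Gamma$-factors in $\gam_N(1-\lam)$ and $\gam_m(1-\lam)$ that could change sign when $\lam>1$ are identical and cancel in their ratio, leaving a manifestly positive constant, so $\nu\ge 0$ does force $c\,\nu\ge 0$.
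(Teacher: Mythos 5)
Your proposal is correct and takes essentially the same route as the paper, which likewise obtains the statement directly from the trace theorem for cosine transforms in \cite[Theorem 5.13]{R7} (the paper gives no further detail beyond that citation, so your elaboration of the restriction $\rho_{K\cap\eta}^\lam=\rho_K^\lam|_{S^{N-1}\cap\eta}$ and of the projected measure $\nu$ merely fills in what is left implicit). The only quibble is a bookkeeping swap: with $\theta=\cos\om\,v+\sin\om\,w$, the condition $\lam<m$ governs integrability of the weight at $\om=\pi/2$ and $m<N$ at $\om=0$, not the other way around.
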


 This fact
was  used (without proof) in \cite[Theorem 4]{KKZ}. In the case,
when $\lam=k$ is an integer, it was established by Milman \cite{Mi};
see \cite [Section 1.1]{R7} for the  discussion of this statement.

\section{Weighted section functions}

Let $K$ be an origin-symmetric convex body in $\bbr^N$. Given a
point $z \in int (K)$ (the interior of $K$), we define the shifted
radial function of $K$ with respect to $z$, \be\label{srf}
\rho(z,v)\!=\!\sup\{\lambda \!>\!0 : z\!+\!\lambda v\in K\}, \quad
(z,v) \in \Omega \!=\! int(K)\times S^{N-1}, \ee which is  a
distance from $z$ to the boundary of $K$ in the direction $v$.
\begin{lemma}\label{zhan} \cite[Lemma
3.1]{RZ} If an origin-symmetric convex body $K $ in $\bbr^N$
  has  $C^m$ boundary $\partial K$, $1\le m\le\infty$,
then $\rho(z,v)\in C^m (\Omega)$.
\end{lemma}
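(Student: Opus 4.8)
The plan is to express $\rho(z,v)$ as the value at which the ray $t\mapsto z+tv$ crosses $\partial K$, and to recover the claimed smoothness from the implicit function theorem applied to a defining function of $\partial K$. Since $K$ has $C^m$ boundary, near any boundary point there is a $C^m$ function $F$ defined on a neighborhood in $\bbr^N$ with $\partial K = \{F=0\}$, $F<0$ inside $K$, $F>0$ outside, and $\nabla F\neq 0$ on $\partial K$. For $(z,v)\in\Omega$ and $\lambda=\rho(z,v)$, the point $w=z+\lambda v$ lies on $\partial K$, so $F(z+\lambda v)=0$. I would consider the map $\Phi(z,v,\lambda)=F(z+\lambda v)$, which is $C^m$ on a neighborhood of $(z_0,v_0,\lambda_0)$ in $\Omega\times\bbr$.

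First I would check the nondegeneracy condition $\partial\Phi/\partial\lambda = \nabla F(z+\lambda v)\cdot v \neq 0$ at the crossing point. This is where convexity enters: since $z\in int(K)$, the ray from $z$ in direction $v$ meets $\partial K$ transversally — the outward normal $\nabla F(w)$ at $w=z+\lambda v$ satisfies $\nabla F(w)\cdot(w-z) = \lambda\,\nabla F(w)\cdot v > 0$ because $K$ is convex and $z$ is an interior point (the hyperplane through $w$ with normal $\nabla F(w)$ supports $K$, so all of $K$, in particular $z$, lies on the side where $F\le 0$, forcing $\nabla F(w)\cdot(z-w)<0$). Hence $\partial\Phi/\partial\lambda>0$ at $(z_0,v_0,\lambda_0)$. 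By the implicit function theorem, $\lambda=\rho(z,v)$ is the unique $C^m$ solution of $\Phi=0$ near that point, so $\rho\in C^m$ in a neighborhood of $(z_0,v_0)$. Since $(z_0,v_0)\in\Omega$ was arbitrary and smoothness is local, $\rho\in C^m(\Omega)$. For $m=\infty$ the same argument applies verbatim.

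The main obstacle, and the only genuinely geometric input, is verifying the transversality $\nabla F(w)\cdot v\neq 0$ uniformly enough to apply the implicit function theorem — i.e., that the ray never becomes tangent to $\partial K$ at its first exit point. I expect to handle this purely from convexity as sketched above: the supporting hyperplane at $w$ separates $z\in int(K)$ strictly from $w$, giving a strict sign. One should also note that $\rho(z,v)$ as defined is indeed this \emph{first} positive crossing (for $z\in int(K)$ the set $\{\lambda>0: z+\lambda v\in K\}$ is an interval $(0,\rho]$ by convexity and compactness, so there is no ambiguity), which is what makes the local solution furnished by the implicit function theorem agree with $\rho$. A minor point to address is the existence of the $C^m$ defining function $F$ with nonvanishing gradient: this is the standard fact that a $C^m$ hypersurface is locally a regular level set, e.g. one may take $F$ to be the signed distance to $\partial K$, which is $C^m$ near $\partial K$ when $\partial K$ is $C^m$.
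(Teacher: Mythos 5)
Your proof is correct. It rests on the same engine as the paper's argument --- the implicit function theorem applied to the condition that $z+\lambda v$ lies on $\partial K$ --- but the implementation differs. The paper introduces the radial projection $g(z,x)=(x-z)/|x-z|$ as a map $int(K)\times\partial K\to int(K)\times S^{N-1}$, asserts that $g(z,\cdot)$ is a $C^m$ diffeomorphism of $\partial K$ onto $S^{N-1}$ for each fixed $z$, inverts to get $x=f(z,v)\in C^m(\Omega)$, and sets $\rho(z,v)=|f(z,v)-z|$. You instead work with a scalar equation $F(z+\lambda v)=0$ in the single unknown $\lambda$, using a local $C^m$ defining function of $\partial K$. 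Your route has the advantage of making explicit the one genuinely geometric ingredient that the paper leaves implicit: the transversality $\nabla F(w)\cdot v>0$ at the exit point, which you correctly derive from the supporting-hyperplane property of convex bodies and the strict separation of the interior point $z$ from the tangent hyperplane at $w$ (the same nondegeneracy is what makes the paper's $g(z,\cdot)$ a diffeomorphism rather than merely a bijection). Your identification of the local IFT solution with $\rho$ via uniqueness of the positive boundary crossing is also handled properly. One small caveat: your parenthetical suggestion to take $F$ to be the signed distance to $\partial K$ is unreliable at low regularity --- for a merely $C^1$ boundary the signed distance need not be $C^1$ in any neighborhood of $\partial K$ (the classical regularity of the distance function requires roughly $C^{1,1}$ or $C^2$). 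This does not affect the proof, since your primary justification --- that a $C^m$ hypersurface is locally a regular level set, e.g.\ of $F(x)=x_N-\phi(x_1,\dots,x_{N-1})$ with $\phi\in C^m$ a local graph representation --- is the right one and suffices for all $1\le m\le\infty$.
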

\begin{proof}  We recall the proof. Consider
the function
$$
v=g(z,x)=\frac{x-z}{\vert x-z\vert}, \qquad z\in int(K),\ x\in \partial K.
$$
Since $\partial K$ is $C^m$, $g(z,x)$ is a $C^m$ function in
$int(K)\times \partial K$. When $z$ is fixed, $g(z,\cdot)$ is a
$C^m$ diffeomorphism from $\partial K$ to $S^{N-1}$. By the implicit
function theorem, $x=f(z,v)$ is a $C^m$ function on $\Omega$. Thus,
$\rho(z,v)=\vert x-z \vert=\vert f(z,v)-z\vert$ is a $C^m$ function
on $\Omega$.\end{proof}

It was discovered by Gardner \cite{Ga1} and Zhang \cite{Z2}, that
positive solution to the Busemann-Petty problem for convex bodies
$K$ in $\Bbb R^3$ and $\Bbb R^4$ is intimately connected with the
volume of parallel hyperplane sections of those bodies; see also
\cite{K, KY}. This volume, which is a hyperplane Radon transform of
the characteristic function $\chi_K (x)$ of $K$,
 is represented as $A_{H, \theta}(t)=
vol_{N-1}(K\cap\{ H+t\theta\})$, where $t \in \bbr, \; \theta \in
S^{N-1}$, and $H$ is a hyperplane through the origin perpendicular
to $\theta$. It was noted in \cite{R4} and  in \cite[p. 492]{RZ},
that further progress can be achieved  if we replace $A_{H,
\theta}(t)$ by the mean value of  the $i$-plane Radon transform
\cite {He, R6} of some weighted function $f(x)=|x|^\b \chi_K (x)$.
This mean value should be taken
 over all $i$-planes parallel to a fixed subspace $\xi \in \Gr_i (\rN)$
at distance $|t|$ from the origin. Such averages for arbitrary  $f$
(see \cite[Definition 2.7]{R6}) play an important role in the theory
of $i$-plane Radon transforms. Similar ``weighted" section functions
were later used in \cite{KYY, Zy}.

Let us proceed with precise definition.  Given a convex body $K \in
\K^N$, we define the weighted section function \be \label {kpl}A_{i,
\b} (t,\xi)=\int_{ S^{N-1}\cap \xi^\perp} \Lam_\b (\xi + tu)\,
du,\qquad \xi\in \Gr_i (\bbr^N), \quad t\in \bbr, \ee where \be
\label{uyt}\Lam_\b (\xi + tu)= \int_{K\cap (\xi + tu)} |x|^\b\,
dx,\ee  is the $i$-plane Radon transform mentioned above. Clearly,
$A_{i, \b} (t,\xi)$  is an even function of $t$. Let $B=\{x: |x|\le
1\}$ be the unit ball in $\rN$ and let $ r_K\!=\!\sup \{ t\!>\!0: \;
tB \!\subset \!K \} $ be the radius of the inscribed ball in $K$.
\begin{lemma} \label {kya}If a convex body
 $K \!\in \! \K^N$ is  infinitely smooth and $\b \!> \!m \!- \!i$, then all derivatives
$$
A_{i, \b}^{(j)} (t,\xi)=\left ( \frac{d}{dt} \right )^j A_{i, \b} (t,\xi),
\qquad 0\le j\le m,
$$
are continuous in $(-r_K, r_K)\times \Gr_i (\bbr^N)$.
\end{lemma}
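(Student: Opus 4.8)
The plan is to convert $A_{i,\b}(t,\xi)$ into an explicit iterated integral in which the weight $|x|^\b$ enters only through the tame factor $(t^2+s^2)^{\b/2}$, and then to differentiate under the integral sign while monitoring the singularity at $s=t=0$.

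First I would record a convenient representation. Fix $\xi\in\Gr_i(\bbr^N)$ and $|t|<r_K$; since $|tu|=|t|<r_K$, the point $tu$ lies in $\mathrm{int}(K)$ for every $u\in S^{N-1}\cap\xi^\perp$, and, $K$ being convex, $K$ is star-shaped about $tu$. Writing points of the $i$-plane $\xi+tu$ as $tu+sv$ with $v\in S^{N-1}\cap\xi$, $s\ge0$, and using $u\perp v$ so that $|tu+sv|^2=t^2+s^2$, passage to polar coordinates on $\xi$ yields, with $\rho$ the shifted radial function of (\ref{srf}),
$$
\Lam_\b(\xi+tu)=\int_{S^{N-1}\cap\xi}\int_0^{\rho(tu,v)}(t^2+s^2)^{\b/2}s^{i-1}\,ds\,dv,
$$
so that
$$
A_{i,\b}(t,\xi)=\int_{S^{N-1}\cap\xi^\perp}\int_{S^{N-1}\cap\xi}G\big(t,\rho(tu,v)\big)\,dv\,du,\qquad G(t,\sigma):=\int_0^\sigma(t^2+s^2)^{\b/2}s^{i-1}\,ds.
$$
By Lemma \ref{zhan}, $\rho\in C^\infty(\Omega)$; in particular, for fixed $u,v$ the function $t\mapsto\rho(tu,v)$ is $C^\infty$ near $0$ with $\rho(0,v)=\rho_K(v)>0$. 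Choosing, locally in $\xi$, continuous orthonormal frames of $\xi$ and $\xi^\perp$ rewrites the $dv\,du$-integration as integration over the fixed product of spheres $S^{i-1}\times S^{N-i-1}$ with $\xi$-dependence entering only continuously through the frame; hence it suffices to control the $t$-derivatives of $(t,\xi)\mapsto G\big(t,\rho(tu,v)\big)$, uniformly for $u,v$ on these spheres and locally uniformly in $(t,\xi)$.

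The heart of the matter is the behaviour of the derivatives of $G$. For $b\ge1$ and $\sigma$ ranging over a compact subset of $(0,\infty)$, $\partial_\sigma^{\,b}G(t,\sigma)=\partial_\sigma^{\,b-1}\!\big[(t^2+\sigma^2)^{\b/2}\sigma^{i-1}\big]$ is $C^\infty$ in $(t,\sigma)$ near $t=0$, because $t^2+\sigma^2$ is bounded away from $0$. The only delicate dependence is the $t$-dependence of the weight: using
$$
\partial_t^{\,a}\big[(t^2+s^2)^{\b/2}\big]=\sum_{\lceil a/2\rceil\le\ell\le a}c_{a,\ell}\,t^{\,2\ell-a}(t^2+s^2)^{\b/2-\ell},
$$
one checks by induction on $a$ (with the estimates below supplying the required domination at each step) that $G(\cdot,\sigma)\in C^a$ and $\partial_t^{\,a}G(t,\sigma)=\int_0^\sigma\partial_t^{\,a}\big[(t^2+s^2)^{\b/2}\big]s^{i-1}\,ds$. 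Here $a\le m$, so $\b>a-i$. A summand with $2\ell>a$ contributes $|t|^{2\ell-a}\int_0^\sigma(t^2+s^2)^{\b/2-\ell}s^{i-1}\,ds$; the substitution $s=|t|\tau$ recasts it as $|t|^{\b+i-a}\int_0^{\sigma/|t|}(1+\tau^2)^{\b/2-\ell}\tau^{i-1}\,d\tau$, which tends to $0$ as $t\to0$ since $\b+i-a>0$ while the $\tau$-integral grows at most polynomially in $1/|t|$, leaving a strictly positive net power of $|t|$. The one summand surviving at $t=0$ is that with $a$ even and $\ell=a/2$, namely $c\int_0^\sigma(t^2+s^2)^{(\b-a)/2}s^{i-1}\,ds$; here $\b-a>-i$, so the bound $(t^2+s^2)^{(\b-a)/2}\le s^{\b-a}$ (when $\b<a$; trivial otherwise) makes this a function continuous in $(t,\sigma)$ across $t=0$, with value $c\int_0^\sigma s^{\b-a+i-1}\,ds$ at $t=0$. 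Therefore every $\partial_t^{\,a}\partial_\sigma^{\,b}G$ with $a+b\le m$ is continuous on $(-r_K,r_K)\times(\delta,\delta^{-1})$.

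Finally I would assemble the pieces. By the chain rule (Fa\`a di Bruno formula), for $0\le j\le m$ the derivative $\partial_t^{\,j}\big[G(t,\rho(tu,v))\big]$ is a finite sum of products of factors $(\partial_t^{\,a}\partial_\sigma^{\,b}G)(t,\rho(tu,v))$ with $a+b\le j$ and of polynomials in the $t$-derivatives of $\rho(tu,v)$; by the previous step all these factors are continuous near $t=0$, with bounds locally uniform in $(t,\xi,u,v)$. Iterating the standard criterion for differentiation under the integral sign then gives
$$
A_{i,\b}^{(j)}(t,\xi)=\int_{S^{N-1}\cap\xi^\perp}\int_{S^{N-1}\cap\xi}\partial_t^{\,j}\big[G(t,\rho(tu,v))\big]\,dv\,du,
$$
and joint continuity of the right-hand side on $(-r_K,r_K)\times\Gr_i(\bbr^N)$ follows from continuity and local boundedness of the integrand together with the frame description above. (Away from $t=0$ the weight $(t^2+s^2)^{\b/2}$ is itself $C^\infty$, so the whole difficulty is genuinely concentrated at $t=0$.) The step I expect to be the main obstacle is the one just before: tracking precisely how each $t$-derivative worsens the singularity of the weight from $(t^2+s^2)^{\b/2}$ to $(t^2+s^2)^{\b/2-\ell}$, and verifying that the accompanying power $t^{\,2\ell-a}$ rescues the over-singular summands while the borderline one ($a$ even, $\ell=a/2$) stays integrable exactly when $\b>m-i$.
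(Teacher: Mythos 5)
Your argument is correct and is essentially the paper's own proof: the same polar-coordinate reduction to $a^\b_{u,v}(t)=\int_0^{\rho(tu,v)}(r^2+t^2)^{\b/2}r^{i-1}\,dr$, the same split of each $t$-derivative into a harmless boundary term and weight-derivative terms, and the same rescaling $s=|t|\tau$ to tame the borderline singular summand under the hypothesis $\b>m-i$. The only difference is presentational: you carry out the induction on the order of differentiation explicitly (via the general formula for $\partial_t^a(t^2+s^2)^{\b/2}$ and local frames for the $\xi$-dependence), whereas the paper treats $m=0,1$ and says ``continuing this process.''
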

\begin{proof} Passing to polar coordinates in the plane $\xi + tu$, we
get \be \label {kogd}\Lam_\b (\xi + tu)=\int_{ S^{N-1}\cap \xi}
\!a^\b_{u,v} (t)\, dv, \ee $$  a^\b_{u,v} (t)= \int_0^{\rho(tu,v)}
r^{i-1} \,(r^2+t^2)^{\b/2}\, dr,$$ where $\rho(tu,v)$ is the radial
function (\ref{srf}).
 It suffices to show that for $\b>m-i$, all
  derivatives $(d/dt)^j a^\b_{u,v}
(t)$, $j=0,1, \dots, m$, are continuous on $(-r_K, r_K)$ uniformly
in $(u,v) \in (S^{N-1}\cap \xi^\perp) \times  (S^{N-1}\cap \xi)$.
 Let,
for short,  $\rho(t)\equiv \rho(tu,v)$. If $m=0$ and  $\b>-i$ the
uniform (in $u$ and $v$) continuity of $a^\b_{u,v} (t)$ follows from
Lemma \ref {zhan}. In the case $m=1$ we have $$(d/dt) a^\b_{u,v}
(t)= a_1 (t) +a_2 (t),$$ where $a_1 (t)=\rho^{i-1}
(\rho^2+t^2)^{\b/2} d\rho/dt$ is nice and $a_2 (t)=\b t
a^{\b-2}_{u,v} (t)$.  If $\b>2-i$ we are done. Otherwise,  if
$1-i<\b \le 2-i$, then \be\label{eda} a_2 (t)=\b t^{i+\b
-1}\int_0^{\rho/t} s^{i-1} (1+s^2)^{\b/2-1}\, ds \to 0, \quad
\mbox{\rm as} \quad t\to 0,\ee and the result is still true.
Continuing this process, we obtain the required result for all $m$.
\end{proof}

 The next lemma is a slight generalization of  the corresponding
statements in \cite {KYY} and \cite {Zy}.
\begin{lemma} \label {kyah} Let $K$ be an infinitely smooth origin-symmetric convex body in
$\bbr^N$, $\xi \in \Gr_i(\bbr^N), \; 1<i<N$.  If $-i<\b\le 0$, then
$A_{i, \b} (t,\xi)\le A_{i, \b} (0,\xi)$. If $\,2-i<\b\le 0$, then
$(d^2/dt^2)A_{i, \b} (t,\xi)|_{t=0}\le 0$.
\end{lemma}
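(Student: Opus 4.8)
The plan is to reduce the first assertion to the classical Brunn--Minkowski inequality by means of a layer-cake (Cavalieri) decomposition of the weight $|x|^\b$, and then to obtain the statement about the second derivative at the origin as a formal consequence of the first assertion together with the $C^2$-regularity already provided by Lemma \ref{kya}. Note that the first assertion uses only convexity and central symmetry of $K$, so I would state and prove it for an arbitrary origin-symmetric convex body; the infinite smoothness of $K$ is needed only for the second assertion.

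\emph{First part.} Put $f(x)=|x|^\b\,\chi_K(x)$ for $x\in\rN$. Since $\b\le 0$, the map $r\mapsto r^\b$ is non-increasing on $(0,\infty)$, hence for every $c>0$ the superlevel set
\[
C_c=\{x\in\rN:\ f(x)\ge c\}=\{x\in K:\ |x|\le R_c\}
\]
is either empty or an origin-symmetric convex body (equal to $K$ when $\b=0$, $c\le1$, and equal to $K$ intersected with a centered ball $R_c B$ when $\b<0$). Applying the layer-cake formula on each $i$-plane $\xi+tu$ gives
\[
\Lam_\b(\xi+tu)=\int_{K\cap(\xi+tu)}|x|^\b\,dx=\int_0^\infty \vol_i\big(C_c\cap(\xi+tu)\big)\,dc .
\]
Now fix $u\in S^{N-1}\cap\xi^\perp$ and work inside the $(i+1)$-dimensional subspace $W=\xi\oplus\bbr u$: the body $C_c\cap W$ is origin-symmetric and convex, and $t\mapsto \vol_i\big(C_c\cap(\xi+tu)\big)$ is its parallel section function in the direction $u$, hence even in $t$, with $i$-th root concave on the symmetric interval where it is positive (Brunn--Minkowski). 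An even function whose $i$-th root is concave on a symmetric interval is maximized at the center, so
\[
\vol_i\big(C_c\cap(\xi+tu)\big)\le \vol_i\big(C_c\cap\xi\big)\qquad\forall\,t\in\bbr .
\]
Integrating this inequality in $u$ over $S^{N-1}\cap\xi^\perp$ and in $c$ over $(0,\infty)$ (Tonelli's theorem applies, all integrands being non-negative and measurable) and reading the layer-cake identity backwards yields $A_{i,\b}(t,\xi)\le A_{i,\b}(0,\xi)$.

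\emph{Second part.} Suppose $2-i<\b\le 0$. Then also $-i<\b$, so the first part gives $A_{i,\b}(t,\xi)\le A_{i,\b}(0,\xi)$ for all $t$. Moreover, by Lemma \ref{kya} with $m=2$ (whose hypothesis $\b>2-i$ holds), $t\mapsto A_{i,\b}(t,\xi)$ is of class $C^2$ on $(-r_K,r_K)$; being even, its first derivative vanishes at $t=0$. Taylor's formula then gives, as $t\to 0$,
\[
A_{i,\b}(t,\xi)-A_{i,\b}(0,\xi)=\half\,(d^2/dt^2)A_{i,\b}(t,\xi)|_{t=0}\,t^2+o(t^2).
\]
Since the left-hand side is $\le 0$ for all small $t$, dividing by $t^2>0$ and letting $t\to 0$ forces $(d^2/dt^2)A_{i,\b}(t,\xi)|_{t=0}\le 0$.

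The only substantive step is the layer-cake/Brunn--Minkowski argument of the first part; the rest is routine. The points that need care are purely technical: verifying that the superlevel sets $C_c$ are genuinely convex and centrally symmetric (immediate once $\b\le 0$), justifying the interchange of the integrations in $u$ and $c$ by Tonelli, and allowing for empty or lower-dimensional sections in the Brunn--Minkowski step, where the inequality $\vol_i(C_c\cap(\xi+tu))\le\vol_i(C_c\cap\xi)$ persists with both sides possibly zero.
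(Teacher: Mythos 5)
Your proposal is correct and is essentially the paper's own argument: the layer-cake decomposition of $|x|^\b$ into superlevel sets $C_c=K\cap R_cB$ is exactly the paper's identity $|x|^\b=-\b\int_0^{1/|x|}z^{-\b-1}\,dz$ (with $c=z^{-\b}$), followed by the same application of Brunn's theorem to the origin-symmetric convex bodies $B_{1/z}\cap K$ and the same deduction of the second-derivative inequality from the first part via the $C^2$-regularity supplied by Lemma \ref{kya}. Your explicit Taylor-expansion justification of the second part and the separate treatment of degenerate level sets are slightly more detailed than the paper's, but the route is the same.
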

\begin{proof} Replace $|x|^\b$
 in (\ref{uyt}) by $-\b\int_0^{1/|x|} z ^{-\b-1} dz$, $\b<0$, and change the order of integration.
 This gives
$$
\Lam_\b (\xi + tu)=-\b \int_0^\infty z^{-\b-1} \, vol_i ((B_{1/z}
\cap K) \cap (\xi + tu))\,dz$$ where $B_{1/z}$ is a ball of radius
$1/z$ centered at the origin. The  integral on the right hand side
is well defined if  $-i<\b<0$. Applying Brunn's theorem to the
convex body $B_{1/z} \cap K$, we obtain
$$
vol_i ((B_{1/z} \cap K)
\cap (\xi + tu))\le vol_i ((B_{1/z} \cap K) \cap \xi),$$ which
gives the first statement of the lemma. If $2-i<\b<0$, then, by
Lemma \ref{kya}, the
 derivative $(d^2/dt^2)A_{i, \b} (t,\xi)$ is continuous in the
 neighborhood of $t=0$ and  the second statement of the lemma follows from the first
 one. In the case $\b=0$ the result follows if we apply Brunn's theorem just to  $ K$.
\end{proof}

We recall some facts about analytic continuation ($a.c.$) of
integrals \be I(\a)=\frac{1}{\Gam (\a)} \int_0^\infty t^{\a -1}
f(t)\, dt, \qquad
 Re \, \a>0.\ee
\begin{lemma} \label {kryak} Let $m$ be a nonnegative integer, $f \in L^1 (\bbr)$.

\noindent{\rm (i)}  If, moreover, $f$  is $m$ times continuously
differentiable in the  neighborhood of $t=0$, then $I(\a)$ extends
analytically to $Re \, \a
>-m$. In particular,
for $-m < Re \, \a < -m  +1$, \be \label{hnhn1} a.c.\,
I(\a)=\frac{1}{\Gam (\a)} \int_0^\infty t^{\a -1} \Big
 [f(t)- \sum\limits_{j=0}^{m  -1} \frac{t^{j}}{j!} f^{(j)}(0)
\Big ] dt\ee and \be\label{netl} \lim\limits_{\a \to -m} I(\a)=
 (-1)^m f^{(m)}(0).\ee

\noindent{\rm (ii)} If $m$ is odd and $f$  is an even function,
which is $m+1$ times continuously differentiable in the
neighborhood of $t=0$, then (\ref{hnhn1}) holds for  $-m -1< Re \,
\a < -m  +1$.
\end{lemma}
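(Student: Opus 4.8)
The plan is the classical device of subtracting a Taylor polynomial and exploiting the zeros of $1/\Gam(\a)$. First I would split at $t=1$. The tail $\int_1^\infty t^{\a-1}f(t)\,dt$ is harmless: since $f\in L^1(\bbr)$ it converges absolutely and is analytic in $\a$ wherever the original integral is defined (for $Re\,\a<1$ in general, and for all $\a$ when $f$ has bounded support, which is the case in every application in this paper), so after division by $\Gam(\a)$ it only contributes an analytic term on the strips under consideration. For the head I would insert the Taylor polynomial $T_{m-1}(t)=\sum_{j=0}^{m-1}(t^j/j!)\,f^{(j)}(0)$ and write, for $Re\,\a>0$,
\[
\int_0^1 t^{\a-1}f(t)\,dt=\int_0^1 t^{\a-1}\big[f(t)-T_{m-1}(t)\big]\,dt+\sum_{j=0}^{m-1}\frac{f^{(j)}(0)}{j!\,(\a+j)}.
\]
Since $f\in C^m$ near $0$, Taylor's theorem gives $f(t)-T_{m-1}(t)=(f^{(m)}(0)/m!)\,t^m+o(t^m)$ as $t\to0$, so the integral on the right converges absolutely and is analytic for $Re\,\a>-m$ (analyticity by Morera's theorem or differentiation under the integral sign), while the finite sum is meromorphic with simple poles precisely at $\a=0,-1,\dots,-(m-1)$. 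As $1/\Gam(\a)$ has simple zeros exactly at $\a=0,-1,-2,\dots$, dividing through annihilates those poles, so $I(\a)$ continues analytically to $\{Re\,\a>-m\}$.

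To obtain (\ref{hnhn1}) on the strip $-m<Re\,\a<-m+1$, I would reassemble the three pieces: there $\int_1^\infty t^{\a+j-1}\,dt=-1/(\a+j)$ for $0\le j\le m-1$, so the finite sum equals $-\int_1^\infty t^{\a-1}T_{m-1}(t)\,dt$, and adding it back to the head recombines the split into $\int_0^\infty t^{\a-1}[f(t)-T_{m-1}(t)]\,dt$, which is absolutely convergent on that strip ($O(t^{\a+m-1})$ at the origin; at infinity, $t^{\a-1}f(t)$ is integrable since $f\in L^1$ and $Re\,\a<1$, and $t^{\a+j-1}$ is integrable since $Re\,\a<-m+1\le-j$ for each $j=0,\dots,m-1$). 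For (\ref{netl}), I would split off the leading monomial: the head is $(f^{(m)}(0)/m!)/(\a+m)$ plus a remainder which, after multiplication by the simple zero of $1/\Gam(\a)$ at $\a=-m$, tends to $0$ (using $o(t^m)$ and the extra factor $\a+m$); the main term contributes $(-1)^m f^{(m)}(0)$ because the residue of $\Gam$ at $\a=-m$ equals $(-1)^m/m!$, and the remaining terms carry the factor $1/\Gam(-m)=0$.

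For part (ii), an even $f$ has $f^{(j)}(0)=0$ for all odd $j$, and since $m$ is odd this forces $f^{(m)}(0)=0$, so $T_{m-1}=T_m$; with $f\in C^{m+1}$ near $0$ this upgrades the remainder to $f(t)-T_{m-1}(t)=O(t^{m+1})$, and the previous argument shows that $\int_0^\infty t^{\a-1}[f(t)-T_{m-1}(t)]\,dt$ is absolutely convergent on the wider strip $-m-1<Re\,\a<-m+1$, giving (\ref{hnhn1}) there. The only genuinely delicate point in all of this is the bookkeeping of domains of convergence — above all, checking that the $\int_0^\infty$ integral in (\ref{hnhn1}) is legitimate on exactly the asserted strip (its upper endpoint set by the highest subtracted monomial $t^{m-1}$ at infinity, its lower endpoint by the order of vanishing at the origin) — together with tracking the exact cancellation of the simple poles $(\a+j)^{-1}$ against the zeros of $1/\Gam(\a)$; everything else (Taylor's theorem, analyticity of parameter integrals) is routine.
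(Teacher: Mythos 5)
Your argument is correct and follows essentially the same route as the paper: the standard Gelfand--Shilov regularization (subtract the Taylor polynomial, cancel the resulting simple poles against the zeros of $1/\Gam(\a)$), with the one non-routine point --- that (\ref{netl}) holds under $C^m$ regularity only --- handled by the same $\e$--$\delta$ estimate on the $o(t^m)$ remainder against the factor $\a+m$ supplied by the zero of $1/\Gam$ at $\a=-m$. The only cosmetic difference is that you invoke Taylor's theorem with Peano remainder directly, whereas the paper expresses the remainder as a Riemann--Liouville fractional integral $I^m f^{(m)}$ to reach the same limit $t^{-m}\bigl(f(t)-T_{m-1}(t)\bigr)\to f^{(m)}(0)/m!$.
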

\begin{proof} All statements are well known \cite{GS}. For instance,
{\rm (ii)} follows from the fact that all derivatives $f^{(j)} (t)$
of odd order are zero at $t=0$ and therefore, for $m$ odd,  the sum
$\sum_{j=0}^{m -1}$ can be replaced by
 $\sum_{j=0}^{m}$.
However, (\ref{netl}) is usually proved for functions, which have at
least $m+1$ continuous derivatives at  $t=0$. We show that it
suffices to have only $m$ continuous derivatives. The latter is
important in  our consideration. Let \bea (I^\lam
f)(t)&=&\frac{1}{\Gam (\lam)}\int_0^t f(s) (t-s)^{\lam -1}\,
dt\nonumber\\&=&\frac{t^\lam}{\Gam (\lam)}\int_0^1 f(t\eta)
(1-\eta)^{\lam -1}\, d\eta, \qquad \lam >0,\nonumber\eea  be the
 Riemann-Liouville fractional integral of $f$.
Note that
$$
f(t)- \sum\limits_{j=0}^{m -1} \frac{t^{j}}{j!} f^{(j)}(0)=(I^m
f^{(m)})(t)
$$
and $t^{-m}(I^m f^{(m)})(t)\to f^{(m)}(0)/m!$ as $t\to 0$. Hence,
for any $\e>0$ there exists $\del=\del (\e)>0$ such that
$$
|t^{-m}(I^m f^{(m)})(t)- f^{(m)}(0)/m!|<\e \qquad \forall t \in
(0,\del).$$ Setting $\a=\a_0 -m$, $\a_0 \in (0,1)$, we obtain

\bea &&\frac{1}{\Gam (\a)} \int_0^\infty t^{\a -1} \Big
 [f(t)- \sum\limits_{j=0}^{m  -1} \frac{t^{j}}{j!} f^{(j)}(0)
\Big ] dt- (-1)^m f^{(m)}(0)\nonumber \\
&&=\frac{1}{\Gam (\a_0-m)} \int_0^\del t^{\a_0 -1}\Big [t^{-m}(I^m
f^{(m)})(t)- f^{(m)}(0)/m!\Big]\, dt\nonumber \\
&&+f^{(m)}(0)\Big [\frac{\del^{\a_0}}{\a_0\,\Gam (\a_0-m)\, m!}
-(-1)^m \Big]\nonumber \\
&&+\frac{1}{\Gam (\a_0-m)} \int_\del^\infty t^{\a_0 -m -1} \Big
 [f(t)- \sum\limits_{j=0}^{m  -1} \frac{t^{j}}{j!} f^{(j)}(0)
\Big ] dt=I_1+I_2+I_3.\nonumber \eea If  $\a_0 \to 0$, then
$\a_0\,\Gam (\a_0-m)\, m! \to (-1)^m$,
$$
|I_1|<\frac{\e\,\del^{\a_0}}{\a_0\,|\Gam (\a_0-m)|} \to \e m!, \qquad I_2 \to 0, \qquad I_3 \to 0.
$$
This gives the result.
\end{proof}

The next lemma establishes connection between weighted section
functions,  spherical Radon transforms, and cosine transforms.
\begin{lemma} \label {krya} Let $\xi \in \Gr_i(\rN), \; 1<i<N$. Suppose that
$$\a \neq N-i, \,N-i+2, \,N-i+4, \ldots,$$ and  $K$ is an infinitely smooth
origin-symmetric convex body in $\rN$.

\noindent{\rm (i)} If $\b>-i$ and $Re \, \a
>0$,
then  \be \label{44}\frac{1}{\Gam (\a/2)}\int_0^\infty t^{\a-1}
A_{i, \b} (t,\xi)\, dt=c\, (R_{N-i}M^{\a+1+i-N}\rho_K
^{\a+\b+i})(\xi^\perp),\ee
$$
c=\frac{\pi^{i/2}\, \sig_{N-i-1}}{(\a+\b+i)\, \sig_{N-1}\, \Gam
((N-i-\a)/2)}.
$$

 \noindent{\rm (ii)}  If $\b>1-i$, then
(\ref{44}) extends
 to  $-1 < Re \, \a < 0$ as  \bea \label{hnhn}&&\frac{1}{\Gam
(\a/2)} \int_0^\infty t^{\a -1}
 [A_{i, \b}(t,\xi)\!-\! A_{i, \b}(0,\xi) ] dt\\&&=c\, (R_{N-i}M^{\a+1+i-N}\rho_K ^{\a+\b+i})(\xi^\perp).
\nonumber\eea

\noindent{\rm (iii)}  If $\b\!\ge\! 2\!-\!i$, then (\ref{hnhn})
holds in the extended domain $-2\!< \!Re \, \a \!< \!0$.

\noindent{\rm (iv)} If $\b>m-i$ and $m\ge 0$ is even, then  \be
\label{46}\frac{\Gam ((1-m)/2)}{2^{m+1}\, \sqrt {\pi}}\,A_{i,
\b}^{(m)}(0, \xi)=c_1 \, (R_{N-i}M^{1-m+i-N}\rho_K
^{\b-m+i})(\xi^\perp),\ee
$$
c_1=\frac{\pi^{i/2}\, \sig_{N-i-1}}{(\b-m+i)\, \sig_{N-1}\, \Gam
((N-i+m)/2)}.
$$
\end{lemma}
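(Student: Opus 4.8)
The plan is to prove (i) by a direct computation and then obtain (ii)--(iv) from it by analytic continuation in $\alpha$. For (i), fix $\xi\in\Gr_i(\rN)$ and assume $\b>-i$, $Re\,\alpha>0$. Substitute the polar representation (\ref{kogd}) of $\Lam_\b$ into the definition of $A_{i,\b}(t,\xi)$; under these hypotheses the resulting iterated integral is absolutely convergent (near $r=0$ since $\b+i>0$, near $t=0$ since $A_{i,\b}(0,\xi)<\infty$, at infinity since $K$ is bounded), so Fubini applies. Decomposing $x=x_\xi+x_{\xi^\perp}$ along $\rN=\xi\oplus\xi^\perp$ and merging the $t$-integral with the integral over $S^{N-1}\cap\xi^\perp$ into an integral over $\xi^\perp$ (so that $t^{\alpha-1}\,dt\,du$ becomes, up to a constant, $|x_{\xi^\perp}|^{\alpha-N+i}\,dx_{\xi^\perp}$), one gets
\[
\int_0^\infty t^{\alpha-1}A_{i,\b}(t,\xi)\,dt=\const\int_K|x|^\b\,|\Pr_{\xi^\perp}x|^{\alpha+i-N}\,dx .
\]
Passing to polar coordinates in $\rN$ and integrating in the radial variable converts the right side, up to a constant depending only on $N,i,\alpha,\b$, into $(R_i^\alpha\rho_K^{\alpha+\b+i})(\xi)$; compare (\ref{rka}). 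Finally the conversion formula (\ref{conn}) of Lemma \ref{l2}, with the shift $\alpha\mapsto\alpha+1+i-N$, rewrites $R_i^\alpha$ at $\xi$ as $R_{N-i}M^{\alpha+1+i-N}$ at $\xi^\perp$; collecting the Gamma-function and sphere-area factors (the $\pi$-powers telescope) yields (\ref{44}) with the stated $c$.

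For (ii) and (iii), observe that both sides of (\ref{44}) are meromorphic in $\alpha$ off $\{N-i,N-i+2,\dots\}$: the right side because $\rho_K^{\alpha+\b+i}$ is an entire $\D_e(S^{N-1})$-valued function of $\alpha$ and $M^{\alpha+1+i-N}$ depends analytically on $\alpha$ off its poles (Lemma \ref{l1}), while $R_{N-i}$ and $c=c(\alpha)$ are analytic; the left side because, by Lemma \ref{kya}, $t\mapsto A_{i,\b}(t,\xi)$ is $C^1$ near $t=0$ once $\b>1-i$, so the Mellin integral continues past $Re\,\alpha=0$ with at worst a simple pole at $\alpha=0$, which $1/\Gam(\alpha/2)$ annihilates. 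In $-1<Re\,\alpha<0$ the continued value is obtained by subtracting the constant Taylor term exactly as in Lemma \ref{kryak}(i) (applied to the even function $A_{i,\b}(\cdot,\xi)$, after the substitution $t\mapsto t^2$ matching the $\Gam(\alpha/2)$-normalization), i.e.\ it equals $\Gam(\alpha/2)^{-1}\int_0^\infty t^{\alpha-1}[A_{i,\b}(t,\xi)-A_{i,\b}(0,\xi)]\,dt$; since this agrees with the right side for $Re\,\alpha>0$ by (i), it agrees in the whole strip, which is (\ref{hnhn}). For (iii) one uses that $A_{i,\b}(\cdot,\xi)$ is \emph{even}, so its first Taylor coefficient at $0$ vanishes and $A_{i,\b}(t,\xi)-A_{i,\b}(0,\xi)=O(t^2)$ when $\b>2-i$ (Lemma \ref{kya} with $m=2$; at the endpoint $\b=2-i$ the second derivative is at worst logarithmically singular, still giving $O(t^2\log(1/t))$, which is harmless for strict $Re\,\alpha>-2$). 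Hence the integrand is integrable near $0$ for $Re\,\alpha>-2$, and Lemma \ref{kryak}(ii) (with its $m=1$) extends the identity to $-2<Re\,\alpha<0$.

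For (iv), let $m\ge0$ be even and $\b>m-i$. By Lemma \ref{kya}, $A_{i,\b}(\cdot,\xi)\in C^m$ near $0$; it is integrable and compactly supported, so the meromorphic continuation of $\int_0^\infty t^{\alpha-1}A_{i,\b}(t,\xi)\,dt$ has at $\alpha=-m$ at worst a simple pole of residue $A_{i,\b}^{(m)}(0,\xi)/m!$ (Lemma \ref{kryak}, (\ref{netl})). Since $-m/2$ is a non-positive integer, $1/\Gam(\alpha/2)$ has a simple zero at $\alpha=-m$; writing $\Gam(\alpha/2)^{-1}=(2^{\alpha-1}/\sqrt\pi)\,\Gam((\alpha+1)/2)\,\Gam(\alpha)^{-1}$ (Legendre duplication) and using (\ref{netl}) in the form $\lim_{\alpha\to-m}\Gam(\alpha)^{-1}\int_0^\infty t^{\alpha-1}f(t)\,dt=(-1)^m f^{(m)}(0)$, one gets
\[
\lim_{\alpha\to-m}\frac1{\Gam(\alpha/2)}\int_0^\infty t^{\alpha-1}A_{i,\b}(t,\xi)\,dt=\frac{\Gam((1-m)/2)}{2^{m+1}\sqrt\pi}\,A_{i,\b}^{(m)}(0,\xi),
\]
which is the left side of (\ref{46}) (the factor $(-1)^m=1$ as $m$ is even). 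On the other side, the meromorphic function $c(\alpha)(R_{N-i}M^{\alpha+1+i-N}\rho_K^{\alpha+\b+i})(\xi^\perp)$ agrees with the left side for $Re\,\alpha>-2$ by (i)--(iii), hence everywhere both are regular; it is therefore regular at $\alpha=-m$, with value $c_1(R_{N-i}M^{1-m+i-N}\rho_K^{\b-m+i})(\xi^\perp)$, $c_1=c(-m)$ being exactly the constant in (\ref{46}). Evaluating at $\alpha=-m$ gives (\ref{46}).

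The main obstacle is the analytic-continuation bookkeeping in (ii)--(iv): one must verify that the left side is genuinely \emph{holomorphic} (not merely meromorphic) at $\alpha=0,-1,-2$, resp.\ at $\alpha=-m$, which then forces the a priori only meromorphic right side to be regular there as well --- even when the index $1-m+i-N$ of $M^{1-m+i-N}$ lies in an exceptional set for Lemma \ref{l1} (this can happen for certain parities of $i,N,m$): the identity persists because the left side is regular, so the apparent singularity of the right side is removable and its value is the one obtained by continuation. The remaining points --- tracking the constants in (i), the half-integer Gamma factors in (iv), and the endpoint regularity at $\b=2-i$ in (iii) --- are routine once this structure is in place.
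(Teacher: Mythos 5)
Your argument is, in substance, the paper's own proof: part (i) is the same double evaluation of $\int_K|x|^\b\,|\text{\rm Pr}_{\xi^\perp}x|^{\a+i-N}\,dx$ (once by slicing parallel to $\xi$, once in polar coordinates) followed by the conversion formula of Lemma \ref{l2}, and parts (ii) and (iv) use exactly the combination of Lemma \ref{kya}, Lemma \ref{kryak}, and the limit $\Gam(\a)/\Gam(\a/2)\to\Gam((1-m)/2)/(2^{m+1}\sqrt\pi)$ that the paper uses. Your bookkeeping of the constants and of the pole/zero cancellation between the Mellin integral and $1/\Gam(\a/2)$ is correct.

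The one genuine gap is the endpoint case $\b=2-i$ in (iii), which you assert and then explicitly classify as ``routine.'' It is not: Lemma \ref{kya} yields a continuous second derivative of $t\mapsto A_{i,\b}(t,\xi)$ near $t=0$ only for $\b>2-i$ \emph{strictly} (the proof of Lemma \ref{kya} needs $\b>m-i$ with strict inequality to control the term $\b t\,a^{\b-2}_{u,v}(t)$), so at $\b=2-i$ you cannot invoke a $C^2$ statement or a ``logarithmically singular second derivative'' without proof. What must be shown is the second-order remainder bound $A_{i,\b}(t,\xi)-A_{i,\b}(0,\xi)=O(t^2\log(1/t))$ \emph{uniformly} in the sphere variables, and establishing it requires a specific decomposition: writing the pointwise increment $\Del_{u,v}(t)$ as $I_1+I_2$, where $I_1=\int_0^{\rho}r^{i-1}[(r^2+t^2)^{\b/2}-r^\b]\,dr$ produces the logarithm (via $\int_0^{t^2}s^{(i+\b)/2-1}h(s)\,ds$ with $h(s)=O(\log(1/s))$ precisely when $i+\b=2$), while $I_2$, the contribution of the moving radial function, is $O(t^2)$ by the smoothness of $\rho(tu,v)$ and the vanishing of $a_2(0)$. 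Without this estimate the analytic continuation of the left-hand side of (\ref{hnhn}) past $Re\,\a=-1$ at the endpoint $\b=2-i$ is unjustified, and this endpoint is actually used downstream (case (\ref{542})(b) in the proof of Lemma \ref{main1} with $\b=d-\a$ at $\a=N-d-2$), so it cannot be discarded.
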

\begin{proof}  {\rm (i)} Consider the integral
\be\label{3hb} g_{\a,\b} (\xi)=\frac{1}{\Gam (\a/2)}\int_K
|P_{\xi^\perp} x |^{\a +i-N}\,|x|^\b\, dx, \qquad Re \, \a
>0,\ee
where $ P_{\xi^\perp}$ denotes the orthogonal projection onto
$\xi^\perp$. We transform (\ref{3hb}) in two different ways (a
similar trick was used in \cite[p. 61]{R5} and \cite [p. 490]{RZ}).
On the one hand, integration over slices parallel to $\xi$ gives
\bea
 g_{\a,\b} (\xi)&=&\frac{1}{\Gam
(\a/2)}\int_{\xi^\perp} |y|^{\a +i-N} \, dy \int_{K\cap (\xi + y)}
|x|^\b\, dx
\nonumber\\
&=&\label{3hp}\frac{1}{\Gam (\a/2)}\int_0^\infty t^{\a-1} A_{i, \b}
(t,\xi)\, dt.\eea On the other hand, passing to polar coordinates,
we can express $g_{\a,\b}$ as the generalized cosine transform
(\ref{rka}), namely,
 \bea
g_{\a,\b} (\xi)&=&\frac{1}{(\a+\b+i)\,\Gam (\a/2)}\int_{S^{N-1}}
\rho_K (u)^{\a+\b+i}\,  |P_{\xi^\perp} u |^{\a +i-N}\,
du\nonumber\\
&=& c_{\a,\b} (R^\a_{i}\rho_K^{\a+\b+i})(\xi),\nonumber\eea
$$
c_{\a,\b} =\frac{2\pi^{(N-1)/2}}{(\a+\b+i)\, \sig_{N-1}\, \Gam
((N-i-\a)/2)}.
$$
Hence, by (\ref{con}), \be g_{\a,\b}
(\xi)=\frac{c_{\a,\b}\,\sig_{N-i-1}}{2\pi^{(N-i-1)/2} }\, (R_{N-i}
M^{\a+1+i-N}\rho_K^{\a+\b+i})(\xi^\perp),\ee  which gives
(\ref{44}).

{\rm (ii)} By Lemma \ref{kya} (with $m=1$) the derivative $(d/dt)
A_{i, \b} (t,\xi)$ is continuous in the neighborhood of $t=0$.
Keeping in mind that
$$\lim\limits_{\a \to -m} \frac{\Gam (\a)}{\Gam (\a/2)}=\frac{\Gam
((1-m)/2)}{2^{m+1}\, \sqrt {\pi}}$$ and applying Lemma \ref{kryak}(i), we obtain (\ref{hnhn}).

{\rm (iii)}  The validity of this statement for  $\b> 2\!-\!i$ is a
consequence of Lemma \ref{kya} (with $m=2$) and Lemma
\ref{kryak}(ii) (with $m=1$). Consider the case $\b= 2\!-\!i$ which
is more subtle. Denote for short $F(t)=A_{i, \b} (t,\xi)$ and let
first  $\b> 1\!-\!i$. By Lemma \ref{kya} the derivative $F'(t)$ is
continuous in the neighborhood of $t=0$. Since  $F$ is an even
function, then $F'(0)=0$ and the left hand side of
 (\ref{hnhn}) can be written as
\be\label {nez} \frac{1}{\Gam (\a/2)}\int_0^\infty t^{\a-1} \Del
(t)\, dt, \quad \Del (t)=F(t)-F(0)-tF'(0). \ee By (\ref{kpl}) and
(\ref{kogd}),
$$
\Del (t)=\int_{ S^{N-1}\cap \xi^\perp} du \int_{ S^{N-1}\cap \xi}
\Del_{u,v} (t) dv
$$
where $\Del_{u,v} (t)=f(t)-f(0)-tf'(0)$,
$$
 f(t)\equiv a^\b_{u,v} (t)=\int_0^{\rho(tu,v)}\! r^{i-1} (r^2+t^2)^{\b/2}\, dr, \quad \rho\equiv \rho(tu,v).
$$
To estimate $\Del_{u,v} (t)$, we write it as $\Del_{u,v}
(t)=I_1+I_2$, where
$$
I_1=\int_0^{\rho(tu,v)}\! r^{i-1} [(r^2+t^2)^{\b/2}-r^\b]\, dr,
$$
$$
I_2=\int_0^{\rho(tu,v)}\! r^{i+\b-1}dr-\int_0^{\rho(0,v)}\!
r^{i+\b-1}dr- t[a_1 (0)+a_2(0)],$$
$$
a_1 (t)=\rho^{i-1}
(\rho^2+t^2)^{\b/2} d\rho/dt, \quad \rho\equiv \rho(tu,v), \quad a_2 (t)=\b t
a^{\b-2}_{u,v} (t).
$$
For $I_1$, changing the order of integration, we have $$
I_1=\frac{\b}{2}\int_0^{\rho}\! r^{i-1}dr\int_0^{t^2}
(r^2+s)^{\b/2-1}\, ds=\frac{\b}{4}\int_0^{t^2}s^{(i+\b)/2 -1}\,
h(s)ds, $$ $$ h(s)=\int_0^{\rho^2/s}\eta^{i/2 -1} (\eta +1)^{\b/2
-1}d\eta.$$ If  $\b= 2\!-\!i$ then $ h(s)=O(\log (1/s))$ as $s \to
0$ and therefore, $I_1=O(t^2 \log (1/t))$ as $t \to 0$.

To estimate $I_2$ we note that $a_2(0)=0$ (see (\ref{eda})) and
therefore, \bea I_2&=&\frac{1}{i+\b}\,
[\rho(tu,v)^{i+\b}-\rho(0,v)^{i+\b}-t (i+\b)
\rho(0,v)^{i+\b-1}\rho'(0,v)]\nonumber\\&=& \psi (t)- \psi (0)-
t\psi' (0), \quad  \psi (t)\equiv\rho(tu,v)^{i+\b}.\nonumber\eea
Hence, $I_2=O(t^2)$ as $t \to 0$. Since all estimates above are
uniform in $u$ and $v$, then the function $\Del (t)$ in (\ref{nez})
is $O(t^2 \log (1/t))$ as $t \to 0$. This enables us to extend this
integral by analyticity to all $Re \, \a
>-2$.

 The  statement (iv) follows from Lemma \ref{kya} (with $m=2$) and   (\ref {netl}).
\end{proof}

\section{Comparison of volumes. Proofs of the main results} We recall
basic notation related to Problem B. Let $K$ and $L$ be
origin-symmetric convex bodies  in $\rN$,  $N=dn$, where $n>1$, $d
\in \{1,2,4,8\}$;  $G$ is the class  (\ref{247}) of block diagonal
orthogonal transformations of $\bbr^{N}$, which includes the groups
$G_{\bbr}, \, G_{\bbc}, \, G_{\bbh, l}, G_{\bbh, r}$; see
(\ref{bl1})-(\ref{bl3}). The notation $\tilde \Gr_{N-d}
 (\bbr^N) $ is used for the respective manifolds (\ref {250}) of $(N-d)$-dimensional
 subspaces $H_\theta$, $ \theta \in S^{N-1}$, in particular, for
 $$ \Gr_{n-1} (\bbr^n), \quad \Gr_{2n-2}^{\bbc} (\bbr^{2n}),\quad \Gr_{4n-4}^{\bbh, l}
 (\bbr^{4n}),\quad \Gr_{4n-4}^{\bbh, r} (\bbr^{4n});$$
  see Section \ref{3344}.
 If $K$ is an infinitely smooth $G$-invariant star body in
$\rN$, then, by Lemma \ref{mal} and Corollary \ref {opya},
 \be \label {mige} S_K (\theta)\equiv vol_{N-d} (K \cap H_\theta )=c\,
 (M^{1-d}\rho_K^{N-d})(\theta),\ee
 \be \label{chto} (D_m
S_K)(\theta) =c\, (M^{1-d-2m}\rho_K^{N-d})(\theta), \qquad \ee where
$$c=\pi^{N/2 -d}\, \sig_{d-1}/(N-d), \quad (D_m f) (\theta)=2^{-2m}
[(-\Del)^m E_{-d} f](x)|_{x=\theta},$$ \be\label {alft} 2m \neq N-d,
N-d+2, N-d+4, \ldots .\ee

\begin{lemma}\label{35t} Let
\be\label {alftr} \a  \notin \{0, -2, -4, \ldots \} \cup \{N, N+2,
N+4, \ldots \}.\ee

\noindent {\rm (i)} If $K$ and $L$ are infinitely smooth
$G$-invariant star bodies in $\rN$ such that
 $(M^{\a +1-N}\rho_K^{d})(\theta)\ge 0$ and \be\label {zna}
 (M^{1-\a}\rho_K^{N-d})(\theta)\le  (M^{1-\a}\rho_L^{N-d})(\theta)\quad \forall \theta \in
 S^{N-1},\ee
then $vol_N(K) \le vol_N(L)$.

\noindent {\rm (ii)}  If $L$ is an infinitely smooth $G$-invariant
convex body with positive curvature such that
 $(M^{\a +1-N}\rho_L^{d})(\theta)<0$ for some  $ \theta \in S^{N-1}$, then there exists a
$G$-invariant smooth convex body $K$ for which (\ref{zna}) holds,
but $vol_N(K) > vol_N(L)$.
\end{lemma}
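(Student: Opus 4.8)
The plan for (i) is to combine three standard facts about the cosine transform: the automorphism/inversion property $M^\b M^{2-N-\b}=\mathrm{id}$ on $\D_e(S^{N-1})$ (Lemma~\ref{l1}), the self-adjointness of $M^\b$ (its kernel $|\theta\cdot u|^{\b-1}$ is symmetric), and Hölder's inequality. I would start from $vol_N(K)=\frac{\sig_{N-1}}{N}\int_{S^{N-1}}\rho_K^{N}\,d\theta$ (polar coordinates, cf.\ (\ref{rraa})) and split $\rho_K^{N}=\rho_K^{N-d}\cdot\rho_K^{d}$. Since the exponents $1-\a$ and $\a+1-N$ add up to $2-N$ and, by (\ref{alftr}), neither belongs to $\{1,3,5,\dots\}$, Lemma~\ref{l1} gives $\rho_K^{d}=M^{1-\a}h$ with $h:=M^{\a+1-N}\rho_K^{d}$, and $h\ge 0$ is precisely the first hypothesis. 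Transferring $M^{1-\a}$ to the complementary factor by self-adjointness (legitimate for $Re(1-\a)>0$ and then by analytic continuation in $\a$ on $\D_e(S^{N-1})$) gives
\[
\int_{S^{N-1}}\rho_K^{N}\,d\theta=\int_{S^{N-1}}(M^{1-\a}\rho_K^{N-d})\,h\,d\theta,\qquad
\int_{S^{N-1}}\rho_L^{N-d}\rho_K^{d}\,d\theta=\int_{S^{N-1}}(M^{1-\a}\rho_L^{N-d})\,h\,d\theta.
\]
As $h\ge 0$ and (\ref{zna}) holds pointwise, the first is $\le$ the second, so $\int\rho_K^{N}\,d\theta\le\int\rho_L^{N-d}\rho_K^{d}\,d\theta$, and Hölder with exponents $N/(N-d)$ and $N/d$ bounds the right side by $\big(\int\rho_L^{N}\,d\theta\big)^{(N-d)/N}\big(\int\rho_K^{N}\,d\theta\big)^{d/N}$. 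Since $N>d$, dividing through yields $\int\rho_K^{N}\,d\theta\le\int\rho_L^{N}\,d\theta$, that is $vol_N(K)\le vol_N(L)$.

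For (ii) I would run the usual perturbation argument, phrased through the same transforms. Set $h_L:=M^{\a+1-N}\rho_L^{d}$; by (\ref{alftr}) it lies in $\D_e(S^{N-1})$, is $G$-invariant (since $M^{\a+1-N}$ intertwines the $O(N)$-action and $\rho_L$ is $G$-invariant), and by hypothesis its minimum value $m_0$ over $S^{N-1}$ is negative. Fix $\chi\in C^\infty(\bbr)$ with $\chi\ge 0$, $\chi(m_0)>0$, and $\chi\equiv 0$ on $[m_0/2,\infty)$, and put $\psi:=\chi\circ h_L$. Then $\psi\in\D_e(S^{N-1})$ is $G$-invariant, nonnegative, not identically zero, and $h_L<0$ on $\supp\psi$; building $\psi$ this way sidesteps any need to average over $G$, which matters because for $d\ge 2$ the set $G$ is a positive-dimensional family of rotations. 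Now define $\rho_K^{N-d}:=\rho_L^{N-d}-\e\,M^{\a+1-N}\psi$ for $\e>0$ small; the right side is a positive $C^\infty$ $G$-invariant function, so it is the $(N-d)$-th power of the radial function of a smooth $G$-invariant (hence origin-symmetric, as $-I_N\in G$) star body $K$, and $\rho_K\to\rho_L$ in $C^\infty$ as $\e\to 0$, so $K$ is convex with positive curvature for $\e$ small because $L$ is. Applying $M^{1-\a}$ and using $M^{1-\a}M^{\a+1-N}=\mathrm{id}$ gives $M^{1-\a}\rho_K^{N-d}=M^{1-\a}\rho_L^{N-d}-\e\psi\le M^{1-\a}\rho_L^{N-d}$, so (\ref{zna}) holds for the pair $K,L$.

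It then remains to check $vol_N(K)>vol_N(L)$, which I would obtain from a first-order expansion. With $w:=M^{\a+1-N}\psi$ fixed and $\rho_L^{N-d}-\e w$ bounded below by a positive constant for small $\e$, differentiating $\int_{S^{N-1}}(\rho_L^{N-d}-\e w)^{N/(N-d)}\,d\theta$ under the integral sign gives
\[
\frac{d}{d\e}\Big|_{\e=0}\int_{S^{N-1}}\rho_K^{N}\,d\theta=-\frac{N}{N-d}\int_{S^{N-1}}\rho_L^{d}\,w\,d\theta=-\frac{N}{N-d}\int_{S^{N-1}}h_L\,\psi\,d\theta,
\]
the last equality being once more self-adjointness of $M^{\a+1-N}$. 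Since $\psi\ge 0$, $\psi\not\equiv 0$ and $h_L<0$ on $\supp\psi$, this derivative is strictly positive, so $\int\rho_K^{N}\,d\theta>\int\rho_L^{N}\,d\theta$ for all sufficiently small $\e>0$, i.e.\ $vol_N(K)>vol_N(L)$. I expect the only real work to be careful bookkeeping of which cosine-transform identities are available under the parameter restriction (\ref{alftr}) — the automorphism and inversion of Lemma~\ref{l1}, together with self-adjointness and its analytic continuation — while convexity of the perturbed body $K$ in (ii) is exactly what the positive-curvature hypothesis on $L$ delivers.
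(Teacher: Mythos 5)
Your proof of (i) is exactly the paper's: rewrite $\int\rho_K^N=(\rho_K^{N-d},\rho_K^d)$, move $M^{1-\a}$ to the first factor via $M^{1-\a}M^{\a+1-N}=\mathrm{id}$ and self-adjointness (legal because (\ref{alftr}) keeps both $1-\a$ and $\a+1-N$ off the odd integers), use the nonnegativity of $M^{\a+1-N}\rho_K^d$ together with (\ref{zna}), and finish by H\"older. Your proof of (ii) is built on the same perturbation $\rho_K^{N-d}=\rho_L^{N-d}-\e\,M^{\a+1-N}\psi$, with two small variations worth noting. First, the paper picks $\psi$ supported on an orbit $G\Om$ and symmetrizes it by $\tilde\psi(\theta)=\int_G\psi(\gamma\theta)\,d\gamma$; you instead take $\psi=\chi\circ h_L$ with a cut-off $\chi$, which is automatically $G$-invariant, even, nonnegative, not identically zero, and supported where $h_L<0$. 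Your side remark that this sidesteps the averaging is only mildly relevant for $d=2,4$ (there $G$ is a genuine compact group and Haar-averaging is unobjectionable), but it is genuinely cleaner for $d=8$, where the set $\{\G_\lam({\bf A}):\lam\in S^7\}$ need not be closed under multiplication, so the averaging step as literally written is not guaranteed to produce a $G$-invariant function. Second, to conclude $vol_N(K)>vol_N(L)$ the paper computes $(\rho_L^d,\rho_L^{N-d}-\rho_K^{N-d})=\e(\vp,\psi)<0$ and invokes H\"older once more, whereas you differentiate $\int(\rho_L^{N-d}-\e w)^{N/(N-d)}$ at $\e=0$ and get the same sign from $\int h_L\psi<0$; both are correct, and both rely on the same inner-product identity via self-adjointness of $M^{\a+1-N}$. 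Your appeal to Lemma \ref{l1} for invertibility, and to smooth dependence of curvature on the radial function (the paper cites Oliker) for convexity of $K$ at small $\e$, match the paper's reasoning.
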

\begin{proof} {\rm (i)} By Lemma \ref{l1},
$$ N\,vol_N(K)=\int_{S^{N-1}} \rho_K^N
(\theta)\,d\theta=(\rho_K^{N-d}, \rho_K^{d})=(M^{1-\a}\rho_K^{N-d},
M^{\a +1-N}\rho_K^{d}).$$
 Since $M^{\a +1-N}\rho_K^{d}\ge 0$,  we can continue:
$$
N\,vol_N(K)\le (M^{1-\a}\rho_L^{N-d}, M^{\a +1-N}\rho_K^{d})=(\rho_L^{N-d},
\rho_K^{d}).$$ Now the result follows by H\"older's inequality.

{\rm (ii)} Let $\vp (\theta)\equiv (M^{\a
+1-N}\rho_L^{d})(\theta)<0$ for some $ \theta \in S^{N-1}$. Then
$\vp$ is negative on some open set $\Om \subset S^{N-1}$ and,  by
Lemma \ref{l1},  $\rho_L^d=M^{1-\a} \vp$. Since $\vp$ is
$G$-invariant, then $\vp <0$ on the whole orbit $G\Om$. Choose a
function $\psi \in
 \D(S^{N-1})$  so that $\psi \neq 0, \; \psi (\theta) >0$ if
 $ \theta\in G\Om$, and $\psi (\theta)\equiv 0$ otherwise. Without
 loss of generality, we can assume $\psi $ to be $G$-invariant (otherwise, it can be replaced
 by $\tilde\psi (\theta) =\int_G \psi (\gam\theta)\, d\gam$).  Define
 a smooth $G$-invariant body $K$ by $\rho_K^{N-d}=\rho_L^{N-d}-\e
 M^{\a +1-N}\psi$, $\e>0$. If $\e$ is small enough, then $K$ is convex.
 This conclusion is a consequence
of Oliker's formula \cite{Ol}, according to which the Gaussian
curvature of an origin-symmetric star body expresses through the
first and second derivatives of the radial function. Applying
$M^{1-\a}$ to the
 preceding equality, we obtain
 $$
M^{1-\a}\rho_K^{N-d}-M^{1-\a}\rho_L^{N-d}=-\e M^{1-\a}M^{\a
 +1-N}\psi=-\e \psi \le 0,$$
 which gives (\ref{zna}).  On the other hand,
 $$
 (\rho_L^{d}, \rho_L^{N-d} - \rho_K^{N-d})=\e (M^{1-\a}\vp, M^{\a
 +1-N}\psi)=\e(\vp, \psi)<0$$ or $(\rho_L^{d}, \rho_L^{N-d})<(\rho_L^{d}, \rho_K^{N-d})$.
 By H\"older's inequality, the latter implies
 $vol_N(L) < vol_N(K)$.
\end{proof}

Now, we investigate for which
 $\a$ the inequality   $(M^{\a +1-N}\rho_K^{d})(\theta)\ge 0$ in Lemma \ref{35t} is available.

\begin{lemma}  \label{main1} Let $K$ and $L$ be infinitely smooth
$G$-invariant convex bodies in $\rN$; $N=dn; \; n>1;\;d\in
\{1,2,4,8\}$. Suppose that  $$(M^{1-\a}\rho_K^{N-d})(\theta)\le
(M^{1-\a}\rho_L^{N-d})(\theta)\quad \forall \theta \in
 S^{N-1}$$
  for some $\a$ satisfying \be
\label{taic1} \max (N-d-2,d)\le \a <N.\ee
 Then  $vol_N(K) \le vol_N(L)$.
\end{lemma}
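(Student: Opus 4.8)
The plan is to reduce, via Lemma~\ref{35t}(i), to the single positivity statement
$$(M^{\a+1-N}\rho_K^d)(\theta)\ge 0\qquad\text{for all }\theta\in S^{N-1},$$
after which the comparison hypothesis of the present lemma is precisely (\ref{zna}) and Lemma~\ref{35t}(i) yields $vol_N(K)\le vol_N(L)$; note that $\a\ge d\ge1$ and $\a<N$ make (\ref{alftr}) hold, and that $M^{\a+1-N}\rho_K^d$ is a continuous, even, $G$-invariant function on $S^{N-1}$ (since $\rho_K$ is such and $M^{\a+1-N}$ preserves $\D_e(S^{N-1})$ and commutes with $O(N)$). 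The range $N-1\le\a<N$ is trivial: for $\a=N-1$, $M^0\rho_K^d=c_{N-1}M\rho_K^d\ge0$ by (\ref{lim}) because $M$ is the Minkowski--Funk transform of $\rho_K^d\ge0$; for $N-1<\a<N$ the cosine transform (\ref{af}) has positive weight $\gam_N(\a+1-N)$ and nonnegative kernel. So I would assume $\max(N-d-2,d)\le\a<N-1$, which forces $1<N-d<N$ and is vacuous only when $d=1$, $n=2$.

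Next I would invoke the weighted section functions of Section~4 through Lemma~\ref{krya} with $i=N-d$, $\xi=H_\theta\in\tilde\Gr_{N-d}(\bbr^N)$, and $\b=d-\a$. With these choices the power of $\rho_K$ on the right of (\ref{44})--(\ref{46}) becomes $\a_0+\b+i=d$ and the cosine exponent becomes $\a_0+1+i-N=\a+1-N$, where $\a_0:=\a+d-N\in[\max(-2,2d-N),\,d-1)$. The key observation is that $M^{\a+1-N}\rho_K^d$, being $G$-invariant, is constant on $S^{N-1}\cap H_\theta^\perp$ by Lemma~\ref{L22}, and since $\theta\in S^{N-1}\cap H_\theta^\perp$ the $d$-plane Radon transform on the right of Lemma~\ref{krya} collapses to point evaluation: $(R_dM^{\a+1-N}\rho_K^d)(H_\theta^\perp)=(M^{\a+1-N}\rho_K^d)(\theta)$. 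Thus Lemma~\ref{krya} represents $(M^{\a+1-N}\rho_K^d)(\theta)$, up to a constant factor, as a Mellin-type transform in $t$ of the weighted section function $A_{N-d,\,d-\a}(t,H_\theta)$, and the normalizing constants ($c$ in (\ref{44})--(\ref{hnhn}) and $c_1$ in (\ref{46})) are strictly positive because every $\Gam$-argument that occurs is positive (here $\a_0<d$ is used).

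The sign is then read off from convexity. Since $-i<\b=d-\a\le0$ throughout, Lemma~\ref{kyah} (Brunn's theorem) gives $A_{N-d,\,d-\a}(t,H_\theta)\le A_{N-d,\,d-\a}(0,H_\theta)$, and, when additionally $2-i<\b$, also $A_{N-d,\,d-\a}^{(2)}(0,H_\theta)\le0$. I would split on $\a_0$: for $0<\a_0<d-1$ use (\ref{44}) (nonnegative integrand, $\Gam(\a_0/2)>0$); for $\a_0=0$ pass to the limit $\a_0\to0^+$ in (\ref{44}), whose left side tends to $\frac12A_{N-d,\,d-\a}(0,H_\theta)\ge0$; for $-2<\a_0<0$ use (\ref{hnhn}) (legitimate by parts (ii)--(iii) of Lemma~\ref{krya}, since $\a<N-1$, resp.\ $\a\le N-2$), the bracket $A_{N-d,\,d-\a}(t,H_\theta)-A_{N-d,\,d-\a}(0,H_\theta)$ being $\le0$ while $\Gam(\a_0/2)<0$; and for $\a_0=-2$ (which occurs in the required range only when $N\ge2d+2$, exactly the condition making $2-i<\b\le0$) use (\ref{46}) with $m=2$, whose left side equals $-\frac14A_{N-d,\,d-\a}^{(2)}(0,H_\theta)\ge0$ since $\Gam(-1/2)=-2\sqrt\pi$. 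These cases exhaust $\a_0\in[\max(-2,2d-N),\,d-1)$, so $(M^{\a+1-N}\rho_K^d)(\theta)\ge0$ and the lemma follows.

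The hard part will be the bookkeeping that makes the window $\max(N-d-2,d)\le\a<N$ exactly the right one: the endpoint $d$ is dictated by the requirement that the weight $\b=d-\a$ be $\le0$, which is where Brunn's theorem (Lemma~\ref{kyah}) enters, and the endpoint $N-d-2$ by the fact that Lemma~\ref{kyah} controls the section function only up to its second $t$-derivative; beyond that, one must track the sign of each $\Gam$-factor case by case and check the admissibility hypotheses of Lemmas~\ref{kya}--\ref{krya} at every step. The conceptual point that does the real work, however, is the $G$-invariance collapse via Lemma~\ref{L22}, which turns a genuinely lower-dimensional section problem into a pointwise inequality on $S^{N-1}$.
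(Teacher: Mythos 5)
Your proposal is correct and follows essentially the same route as the paper: reduce via Lemma \ref{35t}(i) to the positivity of $(M^{\a+1-N}\rho_K^{d})(\theta)$, then establish that positivity by representing this quantity through Lemma \ref{krya} (with $i=N-d$, $\b=d-\a$, and the collapse of the Radon transform via Lemma \ref{L22}) and reading off the sign from Lemma \ref{kyah}, with the same case split on $\a_0=\a+d-N$ at the values $0$ and $-2$. The only cosmetic difference is that you dispatch the top range $N-1\le\a<N$ by direct positivity of the kernel of $M^{\a+1-N}$ rather than folding it into the Mellin representation (\ref{54}), which is harmless.
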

\begin{proof} We apply Lemma \ref {krya} with
$\xi\!=\!H_\theta$, $i\!=\!N\!-\!d$, and $\a$ replaced by $\a+d-N$.
By Lemma \ref {L22} the expression $(R_{N-i}M^{\a+1+i-N}\rho_K
^{\a+\b+i})(\xi^\perp)$ in Lemma \ref {krya} transforms into $
I_{\a,\b}=(M^{\a+1-N}\rho_K ^{\a+\b})(\theta)$ and the latter is
represented as follows.

\vskip 0.2truecm

\noindent $\bullet$ For $\a>N-d, \; \b>d-N$:

 \be \label{54}I_{\a,\b}=\frac{c^{-1}}{\Gam
((\a+d-N)/2)}\int_0^\infty t^{\a+d-N-1} A_{N-d, \b} (t,H_\theta)\,
dt.\ee

\noindent $\bullet$  For $\quad \a=N-d, \; \b>d-N$: \be \label{541}
I_{\a,\b}=\frac{1}{2}A_{N-d, \b} (0,H_\theta). \ee

\noindent $\bullet$  For (a) $N-d-1<\a<N-d, \quad 1+d-N<\b\le 0$, and\\
 ${}\qquad \:\, \;  $(b) $N-d-2<\a<N-d, \quad
2+d-N\le \b\le 0$: \bea \label{542} I_{\a,\b}&=&\frac{c^{-1}}{\Gam
((\a\!+\!d\!-\!N)/2)}\\
&\times&\int_0^\infty \!t^{\a+d-N-1}  [A_{N-d, \b}
(t,H_\theta)\!-\!A_{N-d, \b} (0,H_\theta)]\, dt.\nonumber\eea

\noindent $\bullet$ For $\a=N-d-2, \quad 2+d-N<\b\le 0$: \be
\label{543} I_{\a,\b}=-\frac{c_1^{-1}}{4}\,A''_{N-d, \b}
(0,H_\theta).\ee

\noindent Owing to Lemma \ref{kyah}, expressions
(\ref{54})-(\ref{543}) are nonnegative.  Set $\b=d-\a$ to get
$M^{\a+1-N}\rho_K^d \equiv I_{\a,d-\a}$. Then combine inequalities
in each case. We obtain the following bounds for $\a$.

 \noindent For $d=1$, $ N=n$:   $\max (n-3,1)\le \a<n$.

 \noindent For $d=2,4,8$:

 (\ref{54}) holds if $\quad N-d < \a<N$.

(\ref{541})  holds if    $\quad\a= N-d$.
 \bea \quad \mbox {\rm  (\ref{542}) holds if } \quad   N-d-1< \a<N-d\quad
&&\mbox{\rm when}\quad N
\ge 2d+1;\nonumber\\
 N-d-2 \le \a<N-d \quad &&\mbox{\rm when}\quad  N
\ge 2d+2;\nonumber\\
d\le \a<N-d\quad &&\mbox{\rm when}\quad   2d< N < 2d+2.\nonumber\eea

 (\ref{543}) holds if  $\quad \a=N-d-2, \quad  N \ge  2d+2$.\\
Combining these inequalities, we obtain (\ref{taic1}).
\end{proof}

\begin{remark} Operator $M^{1-\a}\equiv (M^{1+\a-N})^{-1}$ in Lemmas \ref{35t} and \ref{main1}, that was
originally defined  by analytic continuation of the integral
(\ref{af}), can be explicitly represented as an integro-differential
operator  $P(\del)M^\gam$, where $M^\gam, \; \gam>0$, has the form
(\ref{af}) and $P(\del)$ is a polynomial of the Beltrami-Laplace
operator $\del$ on $S^{N-1}$; see \cite[Section 2.2]{R2} for
details.
\end{remark}

Lemma \ref {main1} leads to   main results of the paper. The next
statement gives a positive answer to Problem {\bf B}.
\begin{theorem} \label {mcor}  Let $K$ and $L$ be  $G$-invariant
convex bodies in $\rN$  with section functions

\centerline{$S_K (\theta)= vol_{N-d} (K \cap H_\theta), \quad$
$\quad S_L (\theta)= vol_{N-d} (L \cap H_\theta)$,}

 \noindent where $H_\theta \in \tilde \Gr_{N-d}
 (\bbr^N)$, $N=dn, \; n>1,\;d\in
\{1,2,4,8\}$.  Suppose that \be\label{bp4} S_K (\theta)\le S_L
 (\theta) \qquad
\forall \theta \in S^{N-1}.\ee If $n\le 2+2/d$, then  $vol_N(K) \le
vol_N(L)$.
\end{theorem}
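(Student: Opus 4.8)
The plan is to obtain the theorem as a corollary of Lemma \ref{main1} by specializing its free parameter to $\alpha=d$; essentially all the analytic work is already done there, and the only point needing separate attention is the reduction from arbitrary $G$-invariant convex bodies to infinitely smooth ones. For this reduction, first note that $G$ always contains $-I_N$ (take $\lambda=(-1,0,\dots,0)\in S^{d-1}$ in (\ref{086s})), so every $G$-invariant convex body is origin-symmetric, hence an origin-symmetric star body with even radial function. Given $G$-invariant convex bodies $K,L$ with $S_K\le S_L$, I would approximate $K$ from inside and $L$ from outside by convex bodies with $C^\infty$ boundary and positive curvature, $K_j\subseteq K$ and $L_j\supseteq L$ (standard), and then make the approximants $G$-invariant by averaging their support functions over $G$. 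Since $K$ and $L$ are already $G$-invariant, $h_K$ and $h_L$ are $G$-invariant, so the averaged bodies still satisfy $K_j\subseteq K$ and $L\subseteq L_j$; and averaging a $C^\infty$ support function over the compact parameter set again produces a $C^\infty$ support function, so the averaged bodies are still smooth and convex. (For $d\in\{1,2,4\}$ the set $G$ is a compact group and this is ordinary Haar averaging; for $d=8$ one averages against the pushforward of the uniform measure on $S^{7}$ under $\lambda\mapsto\G_\lambda({\bf A})$.) Monotonicity of $(N-d)$-dimensional sections under inclusion then gives $S_{K_j}(\theta)\le S_K(\theta)\le S_L(\theta)\le S_{L_j}(\theta)$ for all $\theta\in S^{N-1}$, whence, once the theorem is known for smooth bodies, $vol_N(K_j)\le vol_N(L_j)$ for every $j$; letting $j\to\infty$ (volume and the section functions are continuous in the Hausdorff metric) yields $vol_N(K)\le vol_N(L)$.

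So assume now that $K$ and $L$ are infinitely smooth $G$-invariant convex bodies with $S_K\le S_L$. Then $\rho_K,\rho_L\in D_e^G(S^{N-1})$, and formula (\ref{mige}) gives $S_K=c\,M^{1-d}\rho_K^{N-d}$ and $S_L=c\,M^{1-d}\rho_L^{N-d}$ with the same positive constant $c=\pi^{N/2-d}\sigma_{d-1}/(N-d)$. Hence the hypothesis is equivalent to
\[
(M^{1-d}\rho_K^{N-d})(\theta)\le(M^{1-d}\rho_L^{N-d})(\theta),\qquad\theta\in S^{N-1},
\]
which is precisely the assumption of Lemma \ref{main1} with $\alpha=d$. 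It remains only to check that $\alpha=d$ lies in the admissible range (\ref{taic1}), i.e.\ $\max(N-d-2,d)\le d<N$: the inequality $d<N=dn$ is just $n>1$, and $N-d-2\le d$ reads $dn\le 2d+2$, i.e.\ $n\le 2+2/d$ — exactly our hypothesis (and $\alpha=d\ge1$ avoids the exceptional values excluded in (\ref{alftr})). Lemma \ref{main1} then yields $vol_N(K)\le vol_N(L)$, completing the proof.

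I expect the deduction above to be routine once Lemmas \ref{krya}, \ref{kyah}, and \ref{main1} are in hand — the weight of the argument really is carried by the smoothness theory of weighted section functions, their identification with cosine transforms, and the Brunn-theorem positivity, all of which are already established. Accordingly, the one place calling for genuine care is the approximation step: smoothing while simultaneously keeping $G$-invariance and arranging the inner/outer inclusions so that the direction of the section inequality is preserved. This is standard convex-geometry folklore, but it is the natural candidate for the \emph{main obstacle}, and it is slightly more delicate in the case $d=8$, where $G$ is merely a compact subset of $SO(N)$ rather than a subgroup.
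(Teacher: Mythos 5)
Your reduction to the smooth case via Lemma \ref{main1} with $\alpha=d$ is exactly the paper's argument, and the verification that $\alpha=d$ lies in the admissible range (\ref{taic1}) under $n\le 2+2/d$ is correct. The genuine gap is in the approximation step, and you correctly identified it as the delicate point — but the fix you propose for $d=8$ does not work. Averaging a support function over $G$ produces a $G$-invariant function only when the averaging is over a \emph{group}: one needs
$$\int f\big(\G_\lambda({\bf A})\,\G_\mu({\bf A})\,x\big)\,d\lambda \;=\;\int f\big(\G_\lambda({\bf A})\,x\big)\,d\lambda ,$$
i.e.\ the map $\lambda\mapsto \G_\lambda\G_\mu$ should carry the chosen measure on $G$ back into $G$. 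For $d\in\{1,2,4\}$ the set $G$ is a compact subgroup of $SO(N)$ (isomorphic to $\{\pm1\}$, $U(1)$, $Sp(1)$) with Haar measure, and this holds. For $d=8$, however, $\{\G_\lambda({\bf A}):\lambda\in S^7\}$ is \emph{not} closed under composition: the blocks $g_\lambda=\sum_i\lambda_iA_i$ are octonion left-multiplication operators, and $g_\lambda g_\mu$ is generally not of the form $g_\nu$ (this is exactly the failure of associativity; more structurally, $S^7$ carries no Lie group structure, so $G$ cannot be a $7$-dimensional subgroup of $SO(8n)$). Consequently the pushforward average you describe is not $G$-invariant, and the chain $S_{K_j}\le S_K\le S_L\le S_{L_j}$ is useless because the approximants are not in the class to which the smooth-case result applies.

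The paper sidesteps this by using an approximating operator $A$ on convex bodies (Schneider, pp.\ 158--161) that \emph{commutes with rigid motions}: $A(gK)=g\,A(K)$ for every $g\in O(N)$. Then for any $G$-invariant $K$ and any $g\in G$ one has $g\,A(K)=A(gK)=A(K)$, so $A(K)$ is $G$-invariant with \emph{no group hypothesis on $G$} — only $G\subset O(N)$ is needed. That is the correct mechanism for $d=8$ and works uniformly in $d$. (The paper applies this to the polar body $K^*$ and dualizes, and rescales by $j/(j+1)$ so the smooth approximants lie inside $K$, preserving the direction of the section inequality — the same bookkeeping you perform with your inner/outer approximation, and equally valid once the invariance issue is handled.)
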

\begin{proof}  For infinitely smooth bodies the result is contained in Lemma \ref
{main1} (set $\a=d$ and make use of (\ref{mige})). Let us  extend
this result to arbitrary $G$-invariant convex bodies. Given a
$G$-invariant convex body $K$, let $$K^*=\{x: |x\cdot y| \le 1\;
\forall y\in K \}$$ be the polar body of  $K$ with  support function
$$h_{K^*} (x)=\max \{ x\cdot y : y \in K^*\}.$$ Since $h_{K^*}
(\cdot)$ coincides with Minkowski's functional $ ||\cdot||_K$, then
$h_{K^*} (\cdot)$ is $G$-invariant, and therefore, $K^*$ is
$G$-invariant too. It is known \cite [pp. 158-161] {Schn}, that any
origin-symmetric convex body in $\rN$ can be approximated   by
infinitely smooth convex bodies with positive curvature and the
approximating operator commutes with rigid motions. Hence, there is
a sequence $\{K_j^*\}$ of infinitely smooth $G$-invariant convex
bodies with positive curvature such that $h_{K^*_j} (\theta)$
converges to $h_{K^*}(\theta)$ uniformly on $S^{N-1}$. The latter
means, that for the relevant sequence of infinitely smooth
$G$-invariant convex bodies $K_j=(K_j^*)^*$ we have
$$\lim_{j \to \infty }\,\max_{\theta \in S^{N-1}} |\, ||\theta||_{K_j}-
||\theta||_K |=0.$$
 This implies convergence in the radial metric, i.e.,
\be\label{conv1}\lim_{j \to \infty }\,\max_{\theta \in S^{N-1}}
|\rho_{K_j}(\theta) - \rho_{K}(\theta)|=0.\ee

Let us show that the sequence $\{K_j\}$ in (\ref{conv1}) can be
modified so that $K_j \subset K$. An idea of the argument was
borrowed from \cite {RZ}. Without loss of generality, assume that
$\rho_K(\theta)\ge1$. Choose $K_j$ so that
$$
\vert \rho_{K_j}(\theta)-\rho_K(\theta)\vert < \frac1{j+1} \quad
\forall \theta \in S^{N-1}
$$
and set  $K_j'=\frac j{j+1} K_j$. Then, obviously,
$\rho_{K_j'}(\theta) \to \rho_K(\theta)$ uniformly on $S^{N-1}$ as
$j \to \infty$, and
$$
\rho_{K_j'}=\frac j{j+1} \rho_{K_j} <\frac j{j+1} \big
(\rho_K+\frac1{j+1}\big )\le \rho_K.
$$
Hence, $K_j'\subset K$. Now suppose that (\ref{bp4}) is true. Then
it is true when $K$ is replaced by $K_j'$, and, by the assumption of
the lemma, $\vol_N(K_j') \le \vol_N(L)$. Passing to the limit as $j
\to \infty$, we obtain $\vol_N(K) \le \vol_N(L)$.
\end{proof}

The following theorem, which generalizes Theorem 4 from \cite{KKZ},
shows that the restriction $n \le 2+2/d$ in Theorem \ref{mcor} is
sharp.
\begin{theorem}\label{37} Let $N=dn > 2d+2,\; n>1,\;d\in
\{1,2,4,8\}$.  Then there exist $G$-invariant infinitely smooth
convex bodies $K$ and $L$ in $\bbr^{N}$ such that  $ S_K(\theta)\le
S_L(\theta)$ for all $ \theta \in S^{N-1}$, but $vol_N(K) >
vol_N(L)$.
\end{theorem}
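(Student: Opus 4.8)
The plan is to deduce the statement from Lemma~\ref{35t}(ii), applied with $\a=d$. This value of $\a$ is admissible in the hypothesis (\ref{alftr}): since $N=dn>2d+2$ we have $0<d<N$, so $d\notin\{0,-2,-4,\dots\}$ and $d\notin\{N,N+2,\dots\}$. Hence it is enough to produce an infinitely smooth $G$-invariant convex body $L$ in $\bbr^N$ with positive curvature for which
$$(M^{d+1-N}\rho_L^{d})(\theta_0)<0\qquad\text{for some }\theta_0\in S^{N-1}.$$
Given such an $L$, Lemma~\ref{35t}(ii) furnishes a $G$-invariant infinitely smooth convex body $K$ with $(M^{1-d}\rho_K^{N-d})(\theta)\le(M^{1-d}\rho_L^{N-d})(\theta)$ for all $\theta$ and $vol_N(K)>vol_N(L)$; by (\ref{mige}) the first inequality is exactly $S_K(\theta)\le S_L(\theta)$, because the constant $c=\pi^{N/2-d}\sig_{d-1}/(N-d)$ is positive. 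I would also note that the hypothesis $N>2d+2$ is unavoidable: for $N\le 2d+2$ Lemma~\ref{main1} applies with $\a=d$ — then $\b=d-\a=0$ falls in the admissible range giving (\ref{541}) or (\ref{543}) — and forces $M^{d+1-N}\rho_L^{d}\ge 0$ for \emph{every} convex $L$, so no such body can exist; one therefore expects it to exist precisely when $N>2d+2$.

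By Definition~\ref{dk} with $\lam=d$ together with Lemma~\ref{l1}, the condition ``$M^{d+1-N}\rho_L^{d}\ge 0$ everywhere'' says exactly that $L\in\I^N_d$, i.e. $L$ is a $d$-intersection body. So the task is to exhibit a $G$-invariant, infinitely smooth, positively curved convex body that is \emph{not} a $d$-intersection body; I would obtain it by smoothing a concrete non-smooth example. Writing $\bbr^N=(\bbr^d)^n$ with coordinate blocks $x=(x^{(1)},\dots,x^{(n)})$, take the ``$\ell_q$ of Euclidean blocks'' body
$$L_q=\Big\{x\in\bbr^N:\ \sum_{j=1}^{n}|x^{(j)}|^{q}\le 1\Big\},\qquad q>2,$$
$|\cdot|$ being the Euclidean norm on $\bbr^d$. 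Each block norm $|x^{(j)}|$ is $O(d)$-invariant, hence invariant under every $g=\diag(g_\lam({\bf A}),\dots,g_\lam({\bf A}))\in G$, so $L_q$ is $G$-invariant. Generalizing the bodies used for $d=1$ in \cite{GKS} and $d=2$ in \cite{KKZ}, a Fourier computation shows that for $q>2$ and $N=dn>2d+2$ the distribution $(||x||_{L_q}^{-d})^{\wedge}$ — which by (\ref{cf}) is homogeneous of degree $d-N$ with restriction to $S^{N-1}$ a constant multiple of $M^{d+1-N}\rho_{L_q}^{d}$ — is \emph{not} nonnegative, so $L_q\notin\I^N_d$. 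Now $\I^N_d$ is closed in the radial metric: if $\rho_{K_j}^{d}=M^{1-d}\mu_j$ with $\mu_j\in\M_{e+}(S^{N-1})$ and $\rho_{K_j}\to\rho_K$ uniformly, then $(\mu_j,1)=\const\cdot\int_{S^{N-1}}\rho_{K_j}^{d}$ stays bounded, a weak-$*$ limit $\mu\in\M_{e+}(S^{N-1})$ exists, and — $M^{1-d}$ being self-adjoint and an automorphism of $\D_e(S^{N-1})$ — passing to the limit yields $\rho_K^{d}=M^{1-d}\mu$. Thus the complement of $\I^N_d$ among origin-symmetric star bodies is open and contains $L_q$; since $L_q$ can be approximated in the radial metric by infinitely smooth positively curved convex bodies (Schneider's smoothing, which commutes with $SO(N)$ and so preserves $G$-invariance, exactly as in the proof of Theorem~\ref{mcor}), at least one such approximant $L$ lies outside $\I^N_d$. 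For a smooth body this means precisely that $M^{d+1-N}\rho_L^{d}$, now a genuine continuous function, is negative somewhere, as required.

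The main obstacle is the sign computation, i.e. proving $L_q\notin\I^N_d$ when $N>2d+2$. Since a $G$-invariant function is constant on every sphere $S^{N-1}\cap H_\theta^{\perp}$ (Lemma~\ref{L22}), the transform $M^{d+1-N}\rho_{L_q}^{d}$ is again $G$-invariant, so it suffices to evaluate it at a single convenient point, say $\theta_0=(1,0,\dots,0)$. Using the identity $||x||_{L_q}^{-d}=\Gamma(d/q)^{-1}\int_0^\infty s^{d/q-1}\exp\!\big(-s\sum_j|x^{(j)}|^{q}\big)\,ds$, the factorization of the integrand over the $n$ blocks, and the known Fourier transforms on $\bbr^d$ of $\exp(-s|u|^{q})$ (which fail to be nonnegative for $q>2$), one reduces the question to a one-variable integral whose sign is governed by a ratio of $\Gamma$-factors; the inequality $N>2d+2$ (equivalently $n>2+2/d$) is exactly the threshold at which this integral stops being sign-definite, mirroring the computations in \cite{GKS} and \cite{KKZ}. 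A secondary, by now routine, point is that the smoothing above stays within the class of infinitely smooth positively curved $G$-invariant convex bodies.
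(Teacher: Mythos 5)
Your overall architecture coincides with the paper's: reduce Theorem \ref{37} to Lemma \ref{35t}(ii) with $\a=d$ (admissible since $0<d<N$), which requires exhibiting an infinitely smooth $G$-invariant convex body $L$ with $(M^{d+1-N}\rho_L^{d})(\theta)<0$ somewhere, i.e.\ $L\notin\I^N_d$; and your candidate body is the same family $\{x:\sum_j|x^{(j)}|^q\le 1\}$ (the paper takes $q=4$). The difference --- and the gap --- is how the non-membership $L\notin\I^N_d$ is established. In your write-up this rests on the assertion that ``a Fourier computation shows'' that $(\|x\|_{L_q}^{-d})^{\wedge}$ fails to be nonnegative when $N>2d+2$, followed by a description of how one would set the computation up (subordination, factorization over blocks, sign of a one-variable integral governed by $\Gamma$-factors). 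You correctly flag this as ``the main obstacle,'' but you never carry it out, and it is the entire analytic content of the theorem: every other step in your argument (and in the paper's) is soft. As written, the proof is incomplete exactly at its load-bearing point, and extending the [GKS]/[KKZ] computations to general $d$ and $q$ is not routine.

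The paper closes this step with no new computation: it intersects $L$ with the $(N-d+1)$-dimensional subspace $X$ consisting of vectors $(x_{1,1},x_2,\dots,x_n)^T$ (one real coordinate of the first block plus all remaining blocks). The section $L\cap X$ is a unit ball of a type already treated by Koldobsky (\cite[Theorems 4.19, 4.21]{K}), hence is not a $\lam$-intersection body of $\bbr^{N-d+1}$ for $0<\lam<(N-d+1)-3=N-d-2$; by the hereditary property of intersection bodies under sections (Theorem \ref{tyif}), $L$ itself is then not a $\lam$-intersection body of $\bbr^N$ for such $\lam$, and $\lam=d$ is admissible precisely when $N>2d+2$. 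If you want a self-contained proof you must actually perform the sign computation; otherwise the section-plus-Theorem-\ref{tyif} route is the way to close the gap. Your auxiliary steps (closedness of $\I^N_d$ in the radial metric, $G$-equivariant smoothing to obtain positive curvature) are correct and in fact treat the positive-curvature hypothesis of Lemma \ref{35t}(ii) more carefully than the paper does, but they cannot substitute for the missing sign analysis.
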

\begin{proof}
Let  $x =(x_1, \ldots, x_n)^T \in \bbr^{N}, \; x_j=( x_{j,1} \ldots,
x_{j,d})^T$,
$$
L =\{ x : ||x||_4= \Big (\sum\limits_{j=1}^n |x_j|^4 \Big )^{1/4}
\le 1 \}.
$$ Clearly, $L$  is  a $G$-invariant
infinitely smooth convex body. Let $X$ be the $(N-d+1)$-dimensional
subspace of $\bbr^{N}$, which consists of vectors of the form
$(x_{1,1}, x_2, \dots , x_n)^T$. By \cite [Theorems 4.19, 4.21]{K},
$L
 \cap X$ is not a $\lam$-intersection body in $\bbr^{N-d+1}$ if  $0<\lam <
 N-d-2$. Hence, by Theorem \ref{tyif}, $L$ is not a $\lam$-intersection body  for such
 $\lam$. It means (see Definition \ref {dk}) that $(M^{1+\lam
 -N}\rho_K^\lam)(\theta)<0$ for some  $ \theta \in S^{N-1}$.  Set
 $\lam=d$ to get $dn > 2d+2$ and apply Lemma \ref{35t}(ii) with $\a=d$.
 This gives the result.
\end{proof}

\begin{corollary} \label {krrr} The Busemann-Petty problem {\bf A} in $\kn, \; n>1,$ has an affirmative
answer if and only if $n \le 2+2/d$. In particular,

in $\rn$:  if and only if $n \le 4$;

in $\cn$:  if and only if $n \le 3$;

in $\bbh^n_l$ and $ \bbh^n_r$:  if and only if $n = 2$.
\end{corollary}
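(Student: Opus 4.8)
The plan is to derive the corollary from the equivalence of Problem~{\bf A} with Problem~{\bf B}, combined with Theorems~\ref{mcor} and~\ref{37}. First I would record the dictionary. By Theorem~\ref{lle}, the bijection $h:\kn\to\bbr^N$, $N=dn$ with $d=1,2,4$, carries equilibrated convex bodies in $\kn$ onto $G$-invariant origin-symmetric convex bodies in $\bbr^N$, where $G$ is the corresponding group among $G_\bbr$, $G_\bbc$, $G_{\bbh,l}$, $G_{\bbh,r}$ (convexity is preserved in both directions by part~(i), and the equilibrated/$G$-invariant-star correspondence is parts~(ii)--(v)). Moreover, the computations in Section~\ref{3344} show that the central $\bbk$-hyperplanes $\xi=y^\perp$, $y\in S_{\bbk^n}$, correspond under $h$ precisely to the subspaces $H_\theta$ running over $\tilde \Gr_{N-d}(\bbr^N)$, $\theta\in S^{N-1}$, and the volume identities $vol_n(A)=vol_N(h(A))$, $vol_{n-1}(A\cap\xi)=vol_{N-d}(h(A\cap\xi))$ hold by definition. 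Hence Problem~{\bf A} in $\kn$ has an affirmative answer if and only if Problem~{\bf B} does for $G$-invariant convex bodies in $\bbr^N$ relative to $\tilde \Gr_{N-d}(\bbr^N)$.

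For the positive direction, assume $n\le 2+2/d$. If $A,B$ are equilibrated convex bodies in $\kn$ with $vol_{n-1}(A\cap\xi)\le vol_{n-1}(B\cap\xi)$ for every central $\bbk$-hyperplane $\xi$, put $K=h(A)$, $L=h(B)$; these are $G$-invariant convex bodies in $\bbr^N$ with $S_K(\theta)\le S_L(\theta)$ for all $\theta\in S^{N-1}$. Theorem~\ref{mcor} yields $vol_N(K)\le vol_N(L)$, i.e.\ $vol_n(A)\le vol_n(B)$, so the answer is affirmative. For the negative direction, assume $n>2+2/d$, equivalently $N=dn>2d+2$. By Theorem~\ref{37} there are $G$-invariant infinitely smooth convex bodies $K,L$ in $\bbr^N$ with $S_K(\theta)\le S_L(\theta)$ for all $\theta$ but $vol_N(K)>vol_N(L)$; transporting them by $h^{-1}$ and invoking Theorem~\ref{lle} again produces equilibrated convex bodies $A=h^{-1}(K)$, $B=h^{-1}(L)$ in $\kn$ which violate the implication of Problem~{\bf A}. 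Thus the answer is affirmative exactly when $n\le 2+2/d$.

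It remains to read off the three stated specializations. For $\bbk=\bbr$ we have $d=1$, so $2+2/d=4$ and the condition is $n\le 4$. For $\bbk=\bbc$ we have $d=2$, so $2+2/d=3$ and the condition is $n\le 3$. For $\bbk=\bbh$, in either space $\bbh^n_l$ or $\bbh^n_r$, we have $d=4$, so $2+2/d=5/2$; since $n$ is an integer with $n>1$, the condition $n\le 5/2$ is equivalent to $n=2$, and conversely $n>5/2$ means $n\ge3$.

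The bulk of the difficulty has already been absorbed into Theorems~\ref{mcor} and~\ref{37}; the only point in this corollary demanding care is the bookkeeping of the equivalence Problem~{\bf A}$\Leftrightarrow$Problem~{\bf B}, in particular verifying that in the quaternionic case it is exactly the ``left'' and ``right'' manifolds $\Gr_{4n-4}^{\bbh,l}(\bbr^{4n})$ and $\Gr_{4n-4}^{\bbh,r}(\bbr^{4n})$ that enter as $\tilde \Gr_{N-d}(\bbr^N)$, and that the groups $G_{\bbh,l}$, $G_{\bbh,r}$ are matched to the correct hyperplane family via Proposition~\ref{t25}. That verification is supplied by Section~\ref{3344}, so no further work is needed.
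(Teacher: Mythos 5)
Your proposal is correct and follows exactly the route the paper intends: the corollary is stated there as an immediate consequence of the Problem~{\bf A}$\Leftrightarrow$Problem~{\bf B} dictionary from Theorem~\ref{lle} and Section~\ref{3344}, combined with Theorem~\ref{mcor} (sufficiency of $n\le 2+2/d$) and Theorem~\ref{37} (counterexamples for $dn>2d+2$), plus the arithmetic $2+2/4=5/2$ forcing $n=2$ in the quaternionic case. Nothing is missing.
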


Theorem \ref {mcor} also implies the following.
\begin{corollary} \label {mcors} Let $d\in \{2,4,8\}$, $i=N-d$. The lower dimensional
Busemann-Petty problem for $i$-dimensional sections of
$N$-dimensional $G$-invariant convex bodies has an affirmative
answer in the following cases:
\[
\begin{array}{llll}
(a) & N=4 & (d=2): & i=2, \\
(b) & N=6 & (d=2): & i=4, \\
(c) & N=8 & (d=4): & i=4, \\
(d) & N=10 & (d=4): & i=6, \\
(e) & N=16 & (d=8): & i=8. \\
\end{array}\]
\end{corollary}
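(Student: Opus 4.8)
The plan is to derive each case from Theorem \ref{mcor}, which gives an affirmative answer to Problem {\bf B} exactly when the ambient dimension factors as $N=dn$ with $n\le 2+2/d$, equivalently $N\le 2d+2$. Note that the usual lower dimensional Busemann--Petty problem compares \emph{all} $i$-dimensional sections, hence imposes a stronger hypothesis than Problem {\bf B} (which only uses the $(N-1)$-parameter family $H_\theta\in\tilde\Gr_{N-d}(\rN)$); so any case covered by Theorem \ref{mcor} is covered a fortiori.

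\textbf{Cases (a), (b), (c), (e).} In these $N=dn$ with $n=N/d$ an integer greater than $1$: concretely $(d,n)=(2,2)$, $(2,3)$, $(4,2)$, $(8,2)$. The hypothesis $n\le 2+2/d$ of Theorem \ref{mcor} reads $2\le3$, $3\le3$, $2\le 5/2$, $2\le 9/4$, all true. For $d\in\{2,4,8\}$ the parallelizable sphere $S^{d-1}$ carries a complete orthonormal system ${\bf A}=\{A_i\}_{i=1}^{d-1}$ of linear tangent vector fields, so $G=G(n,d;{\bf A})$ of Definition \ref{9981} and the family $\tilde\Gr_{N-d}(\rN)$ of subspaces $H_\theta$ are exactly the data of Problem {\bf B}; since $-I_N\in G$, a $G$-invariant star body is origin-symmetric, and the hypothesis of the lower dimensional Busemann--Petty problem restricts to (\ref{bp4}) for the section functions (\ref{secf}). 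Theorem \ref{mcor} then gives $vol_N(K)\le vol_N(L)$ in all four cases.

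\textbf{Case (d).} Here $N=10$, $d=4$, $i=6$ and $N/d=5/2\notin\bbz$, so Theorem \ref{mcor} does not apply verbatim; what survives is $N\le 2d+2$ ($10\le10$), which is precisely the range (\ref{taic1}) of admissible exponents, with $\a=d=4$, in Lemma \ref{main1}. I would run the method in the split space $\rN=\bbr^{8}\oplus\bbr^{2}$, the first factor identified with $\bbh^2_l$, with $G=G_{\bbh, l}\times O(2)$ (isoclinic left rotation, two $4\times4$ blocks, on $\bbr^8$; all rotations on $\bbr^2$), comparing the central $6$-sections of the special form $\xi=H\oplus\bbr^{2}$, $H\in\tilde\Gr_{4}(\bbr^{8})$. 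For such $\xi$ one has $\xi^\perp\subset\bbr^8$; since $\rho_K$ is $G$-invariant and $M^{1-d}$ commutes with orthogonal maps, Lemma \ref{L22} applied inside the $\bbr^{8}$-factor shows $M^{1-d}\rho_K^{N-d}$ is constant on $S^{N-1}\cap\xi^\perp$, so, as in Lemma \ref{mal}, $vol_6(K\cap\xi)=c\,(M^{1-4}\rho_K^{6})(\theta)$ for the generating point $\theta\in S^{7}\subset S^{9}$ of $\xi^\perp$. I would then carry out the weighted section function analysis of Lemmas \ref{kya}, \ref{krya}, \ref{main1} with $N=10$, $i=6$, $d=4$, $\a=d=4$, deduce $(M^{-5}\rho_K^{4})(\theta)\ge0$ from Brunn's theorem as in Lemma \ref{kyah}, and conclude by the H\"older-type estimate of Lemma \ref{35t}(i); the smoothness assumption is then removed by the approximation step of Theorem \ref{mcor}.

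The real work is in case (d). Two points need care: first, that the weighted section functions $A_{6,\b}(t,H\oplus\bbr^{2})$ still satisfy the uniform smoothness bounds of Lemma \ref{kya} in the presence of the extra Euclidean factor, so that the analytic continuation in $\a$ reaches $\a=d$; and second --- the crux --- that the comparison inequality and the nonnegativity $M^{-5}\rho_K^{4}\ge0$, which a priori hold only on the subsphere $S^{7}\subset S^{9}$, combine, via the $O(2)$-invariance and a descent argument of the type in \cite{R8}, with the identity $N\,vol_N(K)=(M^{1-\a}\rho_K^{N-d},\,M^{\a+1-N}\rho_K^{d})$ of Lemma \ref{35t}(i) to give $vol_{10}(K)\le vol_{10}(L)$. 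Everything else is bookkeeping of the same kind as in Sections 4 and 5.
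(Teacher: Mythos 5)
Your treatment of cases (a), (b), (c), (e) is exactly the paper's proof: the corollary is presented as an immediate consequence of Theorem \ref{mcor}, and for $(d,n)=(2,2),(2,3),(4,2),(8,2)$ the condition $n\le 2+2/d$ holds, while the hypothesis of the lower dimensional problem, taken over all of $\Gr_i(\rN)$, restricts in particular to the hypothesis (\ref{bp4}) over the subfamily $\tilde \Gr_{N-d}(\rN)$. Nothing further is needed for those four cases.

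Case (d) is where the proposal breaks down, and you have correctly put your finger on the source of the trouble: $10$ is not a multiple of $4$, so Theorem \ref{mcor} (and Lemma \ref{main1}) cannot be invoked with $N=10$, $d=4$. The obstruction is in fact more severe than a failure of divisibility: the objects in the statement do not exist for these parameters. The group of Definition \ref{9981} requires $n$ equal $4\times 4$ blocks with $N=4n$, and the family $\tilde \Gr_{6}(\bbr^{10})$ requires the columns $\A_1\theta,\A_2\theta,\A_3\theta$ of $F_4(\theta)$ to form three orthonormal tangent vectors at every $\theta\in S^{9}$; since the Radon--Hurwitz number of $10$ is $\rho(10)=2^1+0-1=1$, the sphere $S^{9}$ carries at most one such field, so no such frame (and no such $G$) exists. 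Your substitute construction ($\bbr^{10}=\bbr^8\oplus\bbr^2$, $G=G_{\bbh,l}\times O(2)$, sections $H\oplus\bbr^2$) therefore proves, at best, a different statement, and even that is not proved: as you yourself concede, the comparison inequality and the positivity of $M^{-5}\rho_K^{4}$ are obtained only for $\theta$ in the subsphere $S^{7}$ of the $\bbr^8$-factor (which is a $G$-orbit, so invariance does not propagate the inequality off it), whereas the pairing $N\,vol_N(K)=(M^{1-\a}\rho_K^{N-d},\,M^{\a+1-N}\rho_K^{d})$ of Lemma \ref{35t}(i) integrates over all of $S^{9}$. The ``descent argument'' you invoke is precisely the missing step, and nothing in \cite{R8} supplies it in this form. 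So case (d) must be regarded as unproved --- indeed, as stated it is ill-posed; the cases genuinely covered by Theorem \ref{mcor} are $N=2d$ for $d\in\{2,4,8\}$ and $N=2d+2$ only when $d$ divides $2d+2$, i.e.\ only for $d=2$, which yields exactly (a), (b), (c), (e).
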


Another consequence of Lemma \ref {main1}, which addresses Problem
{\bf C}, can be obtained if we set $\a=d+2m$ in that Lemma and make
use of Corollary \ref{opya}.

\begin{theorem}\label{37b} Let $K$ and $L$ be infinitely smooth
$G$-invariant convex bodies in $\rN$; $N=dn, \; n>1,\;d\in
\{1,2,4,8\}$. Suppose that  $$(-\Del)^m E_{-d}S_K(\theta)\le
(-\Del)^m E_{-d}S_L(\theta) \quad \forall \theta \in
 S^{N-1}$$
  for some $m$ satisfying \be
\label{taic3} \max (N-2d-2,0)\le 2m <N-d.\ee
 Then  $vol_N(K) \le vol_N(L)$. In particular, $m$ can be
 chosen as follows:
\bea
 \text{For $d\!=\!1$:} &m&\!\!\!=0\; \text{ if $n \!\le \!4$, and}\;m\!\in \!\left\{\frac{n\!-\!4}{2},
 \,\frac{n\!-\!3}{2},  \,\frac{n\!-\!2}{2}\right\} \; \text{if $\;n \!> \!4$}.\nonumber\\
 \text{For $d\!=\!2$:} &m&\!\!\!=0\; \text{ if $n \le 3$, and $m \!\in\!
\{n-3, \; n-2\}$ if $\;n >
 3$.}\nonumber\\
 \text{For $d\!=\!4$:} &m&\!\!\!=0\; \text{ if $n =2$, and $m \!\in \!\{2n\!-\!5, \; 2n\!-\!4, \; 2n\!-\!3\}$ if $n >
 2$.}\nonumber\\
 \text{For $d\!=\!8$:} &m&\!\!\!=0\; \text{ if $n =2$, and} \nonumber\\
 &{}&\!\!\!\!\!\!\!\!\!\!\text{ $m \!\in \!\{4n\!-\!9, \; 4n\!-\!8, \; 4n\!-\!7, \; 4n\!-\!6, \; 4n\!-\!5, \;\}$ if $n >
 2$.}\nonumber \eea
\end{theorem}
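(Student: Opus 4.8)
The plan is to obtain this theorem as a direct corollary of Lemma \ref{main1}, by making the substitution $\a = d+2m$ and using Corollary \ref{opya} to translate the hypothesis about powers of the Laplacian into one about the cosine transforms $M^{1-\a}\rho_K^{N-d}$.

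First I would record that, since $K$ and $L$ are infinitely smooth and $G$-invariant, Lemma \ref{mal} together with Lemma \ref{l1} show that $S_K,S_L\in \D_e^G(S^{N-1})$, so that $E_{-d}S_K$ and $E_{-d}S_L$ are smooth homogeneous functions on $\rN\setminus\{0\}$ and $(-\Del)^m E_{-d}S_K$, $(-\Del)^m E_{-d}S_L$ are smooth there. By the definition of $D_m$ recalled in (\ref{chto}), one has $[(-\Del)^m E_{-d}S_K](x)|_{x=\theta}=2^{2m}(D_m S_K)(\theta)$ on $S^{N-1}$, and likewise for $L$; hence the hypothesis of the theorem is equivalent to $(D_m S_K)(\theta)\le (D_m S_L)(\theta)$ for every $\theta\in S^{N-1}$. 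The bound $2m<N-d$ in (\ref{taic3}) ensures that $2m$ avoids the excluded values $\{N-d,\,N-d+2,\ldots\}$ of (\ref{alft}), so Corollary \ref{opya} applies and rewrites the inequality as $(M^{1-d-2m}\rho_K^{N-d})(\theta)\le (M^{1-d-2m}\rho_L^{N-d})(\theta)$ for all $\theta$.

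Next I would set $\a=d+2m$, so that $1-d-2m=1-\a$ and the last inequality becomes precisely the hypothesis of Lemma \ref{main1}. It then only remains to check that the range condition (\ref{taic3}) is equivalent to (\ref{taic1}) under this substitution: $\a\ge d$ is the same as $m\ge 0$, $\a\ge N-d-2$ is the same as $2m\ge N-2d-2$, and $\a<N$ is the same as $2m<N-d$; thus $\max(N-2d-2,0)\le 2m<N-d$ translates exactly into $\max(N-d-2,d)\le\a<N$, and Lemma \ref{main1} yields $vol_N(K)\le vol_N(L)$. For the explicit list of admissible $m$ one puts $N=dn$ and enumerates the nonnegative integers $m$ with $2m$ in the half-open interval $[\max(dn-2d-2,0),\,dn-d)$: when $n\le 2+2/d$ this interval contains $0$, so $m=0$ works (recovering Theorem \ref{mcor}), and when $n>2+2/d$ the interval lies among the positive integers and contains exactly the values listed for each $d\in\{1,2,4,8\}$. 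I do not expect a genuine obstacle here, since the analytic content is entirely carried by Lemma \ref{main1} and Corollary \ref{opya}; the only points requiring a little care are the normalization $(-\Del)^m E_{-d}=2^{2m}D_m$ on the sphere, which is what keeps the inequality pointing the right way, and the remark that the discrete exceptional set in Corollary \ref{opya} is automatically avoided because $2m<N-d$.
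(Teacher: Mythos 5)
Your proposal is correct and is essentially identical to the paper's own argument, which consists of the single remark preceding Theorem~\ref{37b}: set $\a=d+2m$ in Lemma~\ref{main1} and use Corollary~\ref{opya}. You have merely spelled out the bookkeeping (the normalization $(-\Del)^m E_{-d}=2^{2m}D_m$ on the sphere, the positivity of the constant $c$, the translation of the $\a$-range into the $m$-range, and the fact that $2m<N-d$ keeps one away from the exceptional set in Corollary~\ref{opya}), all of which the paper leaves implicit.
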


\section {Appendix: Proof of Lemma \ref{lev}}

 {\rm (i)}  We recall (see Definition
 \ref{levd}) that a function $p: V \to \bbr$ is a norm if the
following conditions are satisfied:

{\rm (a)} $p(x) \ge 0$ for all $x \in V$;  $\;p(x) =0$ if and only
if $x=0$;

{\rm (b)} $p(\lam x)=|\lam |p(x)$  for all $x \in V$ and all $ \lam
\in \frA$;

{\rm (c)} $p(x+y)\le p(x) + p(y)$  for all $x,y \in V$.

\noindent If $V$ is a right  space over $\frA$, then {\rm (b)} is
replaced by

{\rm (b$'$)} $p(x\lam)=|\lam |p(x)$  for all $x \in V$ and all $
\lam \in \frA$.

  Let $V$ be a left  space (for the right
space the argument follows the same lines with (b) replaced by
 (b$'$)). Suppose that $p: V\to \bbr$ is a norm and show that \be A_p =\{x \in V: p(x)
\le 1\}\ee is an equilibrated convex body. Let
 $x, y \in A_p$. Then for any nonnegative $\a$ and $ \b$
satisfying $\a+\b=1$, owing to (b) and (c), we have
$$
p(\a x+\b y) \le p(\a x) +p(\b y)=\a p( x) +\b p(y)\le \a+\b=1.$$
Hence, $\a x+\b y \in A_p$, that is, $A_p$ is convex. Since for
every $\lam \in \frA$ with $|\lam|\le 1$, (b) implies $p(\lam x)
=|\lam| p(x) \le 1$, then $\lam x\in A_p$. Thus $A_p$ is
equilibrated. To prove that $A_p$ is a body, it suffices to show
that $A_p$ is compact and the origin is an interior point of $A_p$.
To this end, we first prove that $p$ is a continuous function. Let
$x=x_1 f_1 + \ldots x_n f_n$, as above. By (b) and (c), \bea p(x)
&\le& p(x_1 f_1) + \ldots +p(x_n f_n) =|x_1| p( f_1) + \ldots +
|x_n| p( f_n)\nonumber\\&\le& \gam \sum\limits_{j=1}^n |x_j|, \qquad
\gam =\max_{j=1, \ldots ,n} p(f_j).\nonumber\eea Now for $x=(x_1,
\ldots , x_n)$ and $y=(y_1, \ldots , y_n)$, owing to (c), we have
$$ p(x) \le p(y)+ p(x-y), \qquad p(y) \le p(x)+ p(y-x)=p(x)+
p(x-y).$$ Hence, $$  |p(x)-p(y)|\le p(x-y)\le
\gam\sum\limits_{j=1}^n |x_j - y_j|,$$ and the continuity of $p$
follows. Furthermore, since $p(x) >0$ for every $x$ on the unit
sphere $\Om=\{x \in V : ||x||_2 =1 \}$ and since $p$ is continuous,
there exists $\del >0$ such that $p(x) >\del$ for all $x \in \Om$.
If $x \in A_p$ and $x'=x/||x||_2  \in \Om$, then $ 1 \ge
p(x)=||x||_2  p(x')>\del ||x||_2$, i.e., $||x||_2<\del ^{-1}$. Thus,
$A_p$ is bounded. Since $A_p$ is also closed as the inverse image of
the closed set $0\le\lam \le 1$, it is compact.

To prove that $A_p$ is a body, it remains to show that $A_p$
contains the origin in its interior. Since $p$ is continuous and
$\Om$ is compact, there is a number $\b>0$ such that $p(x') \le \b$
for all $x'  \in \Om$. Then the open ball $B_{1/\b} =\{ x \in V:
||x||_2<1/\b \}$ lies in  $A_p$, because for $x \in B_{1/\b}$,
$p(x)=||x||_2 p(x') \le ||x||_2 \b <1$.

(ii) Suppose that $A \subset V$ is an equilibrated convex body and
let us prove (a)-(c) for $p_A (x)=\inf \{ r>0: x \in r A \}$. Since
$A$ is equilibrated, then $0 \in A$ and therefore, $p_A (0)=\inf \{
r>0: 0 \in r A \}=0$. Conversely, if  $p_A(x) \equiv \inf \{ r>0: x
\in r A \}=0$, then for every $k \in \bbn$, there exists $r_k <1/k$
such that $x \in r_k A$. Since $A$ is equilibrated, then $r_k A$ is
equilibrated too, thanks to the following implications that hold for
all $\lam \in \bbk, \; |\lam |\le 1$:
$$
x \in r_k A \Longrightarrow \frac{x}{ r_k}\in A \Longrightarrow
\frac{\lam
 x}{ r_k}\in A \Longrightarrow \lam  x \in r_k A.$$
 Since $r_k A$ is equilibrated, then  $0\in r_k A$ for all $k$.
 Passing in $x\in r_k A$ to the limit as $k \to \infty$, we get $x=0$.
 This gives (a).

 Let us check (b). For $\lam =0$, (b) follows from (a). Let  $\lam \neq
 0$.  Since $A$ is equilibrated, then for every $r>0$, $ \lam x\in rA$
 if and only if $  x\in \frac{r}{ |\lam |} A$. Hence, \bea p_A (\lam x)
 &=&\inf \{r>0:  \lam x\in rA \}=\inf \{r>0:  x\in \frac{r}{|\lam |} A
 \}\nonumber \\&=& |\lam |\inf \{r>0:  x\in rA \}=|\lam |  p_A ( x).\nonumber
 \eea

 To prove (c), choose $\a, \b >0$ and let $x\in \a A, \;y\in \b A$.
 Then $$ x+y =(\a+\b)\left ( \frac {\a}{\a+\b}\,\frac{x}{\a}+
\frac {\b}{\a+\b}\,\frac{y}{\b}\right ).$$
 Since the points $\a^{-1} x$ and $\b^{-1} y$ are in $A$ and $A$ is
 convex, the weighted sum in parentheses is also in $A$, and therefore,
$ x+y \in (\a+\b) A$. This gives $p_A (x+y) \le \a+\b$. By letting
$\a=p_A(x), \;\b=p_A (y)$, we are done. \hfill $\square$

\end{document}